\def\mylinewidth{0.30mm}
\def\mymarkersize{1.75}
 \def\newblock{\ }%
\newcommand{\germansF}{Po\-si\-tiv\-ste\-llen\-s\"atze}
\newcommand{\germanF}{Positivstellensatz}
\newcommand{\germans}{Ps\"atze}
\newcommand{\german}{Psatz}
\newcommand{\schmu}{Schm\"{u}d\-gen}
\newcommand\R{{\mathbb R}}
\newcommand{\K}{\mathcal{K}}
\newcommand{\N}{\mathbb{N}}
\newcommand{\tr}{^\intercal}
\newcommand{\eps}{\varepsilon}
\newcommand{\inter}{\operatorname{int}}
\newcommand{\rec}[1]{{#1}^\infty}
\renewcommand{\P}{\mathcal{P}}
\renewcommand{\Box}{\operatorname{Box}}
\newcommand{\outM}{\multicolumn{1}{c}{$\blackdiamond$}}
\newcommand{\noVal}{\multicolumn{1}{c}{$\relbar$}}
\newcommand{\infble}{\multicolumn{1}{c}{infeas.}}
\newcommand{\outT}{\multicolumn{1}{c}{\clock}}
\newcommand{\outTime}{\clock}
\newcommand{\toutT}{\clock}
\newcommand{\toutM}{$\blackdiamond$}
\newcommand{\lb}{\operatorname{lb}}
\newcommand{\status}{sts.}
\newtheorem{remark}{Remark}
\newtheorem{example}{Example}
\newtheorem{theorem}{Theorem}
\newtheorem{lemma}{Lemma}
\newtheorem{proposition}{Proposition}
\newtheorem{corollary}{Corollary}
\newtheorem{definition}{Definition}
\title{
Reducing nonnegativity over general semialgebraic sets to nonnegativity over simple sets
}
\author{Olga Kuryatnikova\thanks{
              Econometric Institute, Erasmus University Rotterdam, Rotterdam, The Netherlands
               (\href{mailto:kuryatnikova@ese.eur.nl}{kuryatnikova@ese.eur.nl}, \url{https://www.kuryatnikova.com/}).}
\and Juan C. Vera\thanks{Tilburg School of Economics and Management, Tilburg University, Tilburg, The Netherlands
(\href{mailto:j.c.veralizcano@tilburguniversity.edu}{j.c.veralizcano@tilburguniversity.edu}, \url{https://www.tilburguniversity.edu/staff/j-c-veralizcano}.}
\and Luis F. Zuluaga\thanks{Industrial and Systems Engineering Department, Lehigh University, Bethlehem, PA, USA
(\href{mailto:luis.zuluaga@lehigh.edu}{luis.zuluaga@lehigh.edu}, \url{https://coral.ise.lehigh.edu/luiszuluaga/}).}
}
\begin{document}
\maketitle

\begin{abstract} A nonnegativity certificate (NNC) is a way to write a polynomial so that its nonnegativity on a semialgebraic set becomes evident. \germansF{} (\germans) guarantee the existence of NNCs. Both, NNCs and \germans{} underlie powerful algorithmic techniques for optimization.
This paper proposes a universal approach to derive new \germans{} for general semialgebraic sets from ones developed for simpler  sets, such as a box, a simplex, or the nonnegative orthant. We provide several results illustrating the approach.
 First, by considering Handelman's \germanF{} (\german{}) over a box, we construct {\em non-SOS Schm\"{u}dgen-type} \germans{} over any compact semialgebraic set. That is, a family of \germans{} that follow the structure of the fundamental Schm\"{u}dgen's \german{}, but where instead of SOS polynomials,  any class of polynomials containing the nonnegative constants can be used, such as SONC, DSOS/SDSOS, hyperbolic or sums of AM/GM polynomials.
 Secondly, by considering the simplex as the simple set, we derive a sparse \german{} over general compact sets, which does not rely on any structural assumptions of the set.
 Finally, by considering P\'olya's \german{} over the nonnegative orthant, we
  derive a new non-SOS \german{} over unbounded sets which satisfy some generic conditions.
 All these results contribute to the literature regarding the use of non-SOS polynomials and sparse NNCs to derive \germans{} over compact and unbounded sets.
 Throughout the article, we illustrate our results with relevant examples and numerical experiments.
\end{abstract}

\noindent \textbf{Keywords.}
  \germanF{}, Certificates of non-negativity, non-SOS polynomials, Polynomial Optimization, Sparsity, SDSOS polynomials, SONC polynomials

\section{Introduction} \label{sec:intro}

A {\em nonnegativity certificate} (NNC)  is a way to write a given function such that its nonnegativity on a given set becomes evident. {\em \germansF{}} (\germans) assert the existence of certain classes of NNCs. Both NNCs and \germans{} are fundamental tools in optimization, and they underlie powerful algorithmic techniques for various types of optimization problems~\citep[see, e.g.,][]{lasserre2015introduction,anjo12}. NNCs and optimization are related through the identity
\begin{equation}\label{eq:NNC-OPT}
\min_x\{p(x):x\in S\}  = \max_{\lambda} \{\lambda: p(x) - \lambda \text{ nonnegative on } S\};
\end{equation}
that shows that optimization over the set $S \subseteq \R^n$  is {\em equivalent} to certifying nonnegativity on $S$.

We are interested in the case when $p$ is a polynomial and when $S$ is a semialgebraic set; that is a set defined by polynomial
inequalities (and equalities).
\germans{} for semialgebraic sets have been studied for more than a century in algebraic geometry~\citep[see, e.g.,][]{reznick1995uniform}.
Together with the dual theory of moments~\citep[see, e.g.,][]{curto2000truncated,schmudgen1991k}, they are the foundation of modern {\em polynomial optimization} (PO)~\citep[see, e.g.,][]{lasserre2015introduction, anjo12}, which studies optimization problems where both the objective and constraints are defined using polynomials.
Broadly speaking, since NNCs provide sufficient conditions for nonnegativity, {\em degree-restricted} versions of the NNC~\citep[see, e.g.,][]{lasserre2009moments} can be used to obtain a hierarchy of inner approximations of the feasible set in the right-hand side of~\eqref{eq:NNC-OPT}; which in turn provides a hierarchy of lower bounds for~\eqref{eq:NNC-OPT}.
 The ability to solve the resulting approximation problems depends on various characteristics of the NNC, including its number of terms and sparsity/symmetry structure~\citep[see, e.g.,][]{magron2022sparse}, as well as  the {\em base class} (of polynomials) used to construct it \citep[see, e.g.,][]{kuang2017alternative, roebers2021sparse}. By base class, we mean sets of nonnegative polynomials such as sum of squares (SOS),  {\em sums of nonnegative circuit} (SONC)~\citep[see, e.g.,][]{dressler2017positivstellensatz}, and {\em sums of scaled diagonal dominant SOS} (SDSOS)~\citep[see, e.g.,][]{fidalgo2011positive, ahmadi2019dsos} used to construct NNCs.

 Further, a \germanF{} (\german{}) implies the  convergence, to the PO problem's optimal value, of the sequence of lower bounds obtained from solving the corresponding degree-restricted  hierarchy~\citep[see, e.g.,][]{lasserre2009moments}.
Moreover, an explicit bound on the degree of the polynomials appearing in the \german{} implies a convergence rate for the sequence of lower bounds from the hierarchy~\citep[see, e.g.,][]{KlLaur10-degreeBounds}.

\subsection{Main contribution} \label{subsec:intro_main}
\label{sec:approach}

As the title of the article asserts, our main contribution is to show how to take advantage of \germans{} on simple sets (e.g., a box or a simplex), to construct novel \germans{} on general semialgebraic sets; thus, providing an alternative to the existing mathematical strategies to derive \germans{}. As an illustration, we obtain \germans{} that are novel in terms of their sparsity structure, and the base classes (e.g., SOS, SONC, and SDSOS polynomials, among others).

Our approach is inspired by the following observation. Given $g \in \R[x]^m$, the complexity of the underlying semialgebraic set~$S_g := \{x \in \R^n: g_1(x) \ge 0, \dots, g_m(x) \ge 0\}$ plays an important role in the known theory about NNCs over~$S_g$.
For instance, when $S_g$ is compact, \schmu{}'s \german{}~\citep{schmudgen1991k} is constructed by choosing SOS polynomials as the base class, and using sums of the product of these SOS polynomials with products of the $g_i$'s~(see \eqref{eq:schmu}). By further assuming that $S_g$ is a polyhedron, a {\em simpler} \german{}; that is,  Handelman's \german{}~\citep{Hand88}, can be derived in which the SOS polynomials are replaced (in  \schmu{}'s \german{}) by nonnegative constants (see, Proposition~\ref{prop:HandNneg}).

The key element of our approach is to reduce the problem of checking nonnegativity over~$S_g$ to the problem of checking nonnegativity over a simpler  set of higher dimensions. Namely, to certify the nonnegativity of~$p$ on~$S_g$, it is enough to write~$p$ in the form $p(x) = F(x, g_1(x), \dots, g_m(x))$ where $F \in \R[x,u]$ is nonnegative on some \emph{simple} set~$T\subset \R^{n+m}$ satisfying
\begin{equation}
\label{eq:hatSg}
\hat S_g :=  \{(x,g_1(x),\dots,g_m(x)): x \in S_g\} \subseteq T \subset  \R^n\times \R^m_+.
\end{equation}
We call such $F$ a {\em lifting} of $p$ to $T$.

The crux of our work is to show that liftings, with {\em simpler} choices of the set $T$, exist for polynomials positive on general semialgebraic sets under appropriate conditions.
More specifically, when~$S_g$ is compact, we show (Theorem~\ref{thm:mainCompGen}) that given any compact set~$T$ satisfying $\hat S_g\subset T \subset \R^n\times R^m_+$, there exists a lifting of~$p$ to~$T$.
Notice that if $S_g$ is compact, then~$\hat S_g$ is also compact and simple sets $T$ satisfying condition~\eqref{eq:hatSg} always exist. For instance, we can choose $T$ to be a box. This ample freedom to choose $T$ allows for the design of NNCs fitted for different purposes, that is,
NNCs with varying sparsity structures,  using different base classes,
and in particular, not necessarily SOS polynomials (see Figure~\ref{fig:polys}).
We illustrate this by considering two possible choices of $T$, a box and a generalized simplex.

We extend the proposed approach to unbounded sets $S_g$, by choosing $T$ to be the nonnegative orthant in higher dimensions.
However,  $g \in \R[x]^m$ may be such that the nonnegative orthant does not contain $\hat S_g$.
In such cases, we make a two-step reduction.
First, we embed $S_g \subset \R^n$ on $S_{g'} \subset \R^{2n}_+$ via $x \mapsto (x^-,x^+)$ where $x^- = -\min(x,0)$ and $x^+ = \max(0,x)$ and $g'(x^+, x^-):= g(x^+ - x^-)$. Then, we use that $S_{g'} \subset \R^{2n}_+$ implies $T := \R^{2n+m}_+ \supset \hat S_{g'} $.

An essential property of our liftings is that their degree is known a priori (see Theorems~\ref{thm:mainCompGen} and~\ref{thm:ineqGen} for details).
This degree is bounded by the maximum among the degree of $p$ and twice the degrees of the polynomials $g_j$, $j=1,\dots,m$. This fact allows us to work in the finite-dimensional vector space of fixed-degree polynomials. Thus, our proofs contrasts with the proofs of other well-known \germans{}, such as \cite{Puti93,schmudgen1991k}, which require infinite-dimensional algebraic geometry tools. Also, our lifting strategy fundamentally differs from the approaches that use the  general \german{} by Marshall \citep[][Thm. 5.4.4]{Marshall2008}, such as the SONC \german{} in~\citep[][Thm. 4.8]{dressler2017positivstellensatz}, and the
 {\em sums of arithmetic-mean/geometric-means} (SAG) \german{} in~\citep[][Thm. 4.2]{chandrasekaran2016relative}.  Our liftings for unbounded sets are also distinct from   homogenization-based techniques for unbounded sets, such as  \cite{putinar1999solving,dickinson2015extension, mai2021positivity}, where the primary approach is to ``compactify" unbounded sets via homogenization.

To evidence the impact of the proposed methodology, we study relevant applications of our approach.

\subsection{Applications} \label{subsec:intro_summary}

Most existing \germans{} for polynomials over semialgebraic sets derive from algebraic geometry and are not created with optimization applications in mind. NNCs over the semialgebraic set~$S_g$ are classically constructed as rational expressions whose numerator and denominator are sums of products of $g_j$, $j=1,\dots,m$ and polynomials taken from a set of nonnegative polynomials~\citep[see, e.g.,][]{artin, habicht1939zerlegung, schmudgen1991k, Puti93, HardLP88, Hand88}.  As mentioned earlier, we call the latter set of polynomials the base class of the NNC and refer to the NNC as one {\em based} on that class of polynomials.
SOS polynomials are the most common base class~\citep[see, e.g.,][]{artin, habicht1939zerlegung, schmudgen1991k, Puti93}. Thus, the latter articles provide \germans{} based  on SOS polynomials, or {\em SOS \germans{}} for brevity.
In more generality, throughout the article, we will use $\K$ to refer to potential base classes (e.g., SOS, SONC, and SDSOS polynomials, among others).
To be amenable to optimization, NNCs must have as a denominator a fixed (known) polynomial~\citep[see, e.g.,][]{habicht1939zerlegung, schmudgen1991k, Puti93, HardLP88, Hand88} and the membership to the base class must be efficiently representable.
For instance, one can test if a polynomial is an SOS of a known degree using {\em semidefinite programming} (SDP)~\citep[see, e.g.,][]{BlekPT13}. Thus, the degree-restricted version of SOS \germans{} (i.e., when the degree of the SOS polynomials in the associated NNC is restricted to be at most a given degree) leads to {\em linear matrix inequality} (LMI) approximation hierarchies for PO problems that can be solved in polynomial time, up to a given accuracy, using interior-point methods~\citep[see, e.g.,][]{NestT98}.

\subsubsection*{Non-SOS Schm\"{u}dgen-type \germans{} over compact sets}

Solving SDPs is computationally expensive~\citep[see, e.g.,][]{lasserre2006convergent}.
One alternative that has been recently explored to address the computational effort required to solve SDPs associated with SOS \germans{} is to derive \germans{} in which the base class is not the SOS polynomials; that is, {\em non-SOS} \germans{}~\citep[see, e.g.,][]{dressler2017positivstellensatz, dickinson2015extension, chandrasekaran2016relative}. Some of these results can be referred to as {\em non-SOS Schm\"{u}dgen-type} \germans{}, as they follow the structure of the fundamental
Schm\"{u}dgen \german{}~\cite{schmudgen1991k}.
In particular, after adding appropriate redundant constraints to the definition of $S_g$, non-SOS Schm\"{u}dgen-type \germans{} have been derived based on the classes of  SONC~\citep{dressler2017positivstellensatz}, SAG~\citep[][Thm. 4.2]{chandrasekaran2016relative}, and {\em hyperbolic}~\citep{hyperb1} polynomials.
Also, non-SOS Schm\"{u}dgen-type NNCs based on {\em diagonally dominant SOS} (DSOS) and SDSOS polynomials provide reasonable bounds in some practical applications~\citep{ahmadi2019dsos}.

\begin{wrapfigure}{r}{0.35\textwidth}
    \centering
    \begin{adjustbox}{width=0.30\textwidth}
    \begin{tikzpicture}

\begin{scope}[shift={(3cm,-5cm)}, fill opacity=0.5,
  mytext/.style={text opacity=1,font=\large}]

\draw[draw = black] (0,0) circle (5);
\draw[fill = gray!20, draw = black,name path=circle 1] (-1.5,0) circle (3);
\draw[fill = gray!35, draw = black,name path=circle 2] (1.5,0) circle (3);
\draw[fill = gray!75, draw = black,name path=circle 3] (1.3,0) arc(0:180:1.3 and 1);
\draw[fill = gray!75, draw = black,name path=circle 4] (-1.3,0) arc(-180:0:1.3 and 2.2);
\draw[fill = gray!95, draw = black,name path=circle 5] (0,-1) circle (0.9);

\pgftransparencygroup
\clip (-1.5,0) circle (3);
\fill[gray!50] (1.5,0) circle (3);
\endpgftransparencygroup

\node[mytext] at (0,4) (A) {\bf Non-negative};
\node[mytext] at (-2.8,0.4) (B) {\begin{tabular}{c} \bf SOS \\ (SDP) \end{tabular}};
\node[mytext] at (2.8,0.4) (C) {\begin{tabular}{c} \bf SONC \\ (SOCP, GP, \\ REP) \end{tabular}};
\node[mytext] at (0,1.5) (D) {\begin{tabular}{c} \bf SDSOS \\ (SOCP) \end{tabular}};
\node[mytext] at (0,0.4) (E) {\begin{tabular}{c} \bf DSOS \\ (LP) \end{tabular}};
\node[mytext] at (0,-0.8) (E) {\begin{tabular}{c} $\mathbf{\mathbb{R}_+}$ \\ (LP) \end{tabular}};
\end{scope}
\end{tikzpicture}
\end{adjustbox}
    \caption{Nonnegative polynomial classes. Related optimization model is shown in parentheses. \label{fig:polys}}
\end{wrapfigure}
Our lifting approach leads to general non-SOS Schm\"{u}dgen-type \germans{} (Proposition~\ref{prop:genHand}) in which any
class $\K$ that contains the nonnegative constants (i.e., $\R_+ \subseteq \K$) could be used as the base class.
Our  non-SOS Schm\"{u}dgen-type \germans{} answer an open question underlying the work in~\citep{dressler2017positivstellensatz, chandrasekaran2016relative}; namely, for which base classes~$\K$, there exists a~$\K$ Schm\"{u}dgen-type \german{} on compact sets?
In particular, from Figure~\ref{fig:polys}, it is easy to see that the SONC \german{}~\citep{dressler2017positivstellensatz} and SAG \german{}~\citep[][Thm. 4.2]{chandrasekaran2016relative} readily follow from our result. A wealth of classes of polynomials (see Figure~\ref{fig:polys}) can be used as base classes in the~\germans{}.
Interestingly, our result also shows how to use DSOS/SDSOS polynomials  to construct NNCs which are guaranteed to exist over general compact semialgebraic sets (Corollary~\ref{cor:SDSOS}).
This is in contrast with the aforementioned DSOS/SDSOS NNCs proposed in~\citep{ahmadi2019dsos} for numerical purposes,
which do not always exist~(see, \cite{josz2017counterexample,ahmadi2017response} and Example~\ref{ex:sdsos}).

\looseness -1
When using our lifting method to obtain non-SOS Schm\"{u}dgen-type \germans{} via the reduction to the box (Proposition~\ref{prop:genHand}), redundant lower and upper bound box constraints for $S_g$ are introduced. That is, constraints of the form $x_i - L_i$ and $U_i - x_i$, $i=1,\dots,n$, appear explicitly in the certificates (see~\eqref{eq:genHand}). The inclusion of such constraints might seem like a drawback of our method.
However, we show (Section~\ref{subsubsec:SDSOS_bounds}) that our lifting method adds the right level of complexity to the NNC, to allow the use of non-SOS base classes; that is, this type of non-SOS NNCs may not exist on compact semialgebraic sets unless such redundant constraints are included. In particular, our non-SOS Schm\"{u}dgen-type \germans{} provide an answer to another open question underlying~\citep{dressler2017positivstellensatz, chandrasekaran2016relative}; namely, what types of terms are needed to obtain non-SOS Schm\"{u}dgen-type \germans{}?

\subsubsection*{Sparse SOS \germans{}  over compact sets}
\label{sec:sparseapp}
A few years after the introduction of the Lasserre hierarchy~\cite{lasserre2001global}, which uses SOS polynomials as the base class, sparsity was exploited to improve the numerical scalability of this hierarchy~\cite{waki2006sums}. This method was implemented in \cite{waki2008algorithm} and proven to converge in~\cite{lasserre2006convergent}.
Since then, deriving structured variations of classical \germans{} has become an important avenue  to make SOS \germans{} more amenable to optimization.  In particular, sparsity~\citep[see, e.g.,][]{wang2019tssos, wang2020chordal,sparse20,Weisser2018,magron2022sparse} and symmetry~\citep[see, e.g.,][]{riener2013exploiting, gatermann2004symmetry} are exploited in order to reduce the size of the corresponding optimization problem obtained when using (degree-restricted versions of) such certificates in~\eqref{eq:NNC-OPT}.

Along these lines, our lifting approach leads to the derivation of a {\em semi-sparse} SOS \german{} (Proposition~\ref{prop:sparse}) whose associated NNC  has some sparsity structure, even if the underlying semialgebraic set does not possess any sparsity properties that are  beneficial for the existing techniques~\citep{waki2006sums,waki2008algorithm,lasserre2006convergent,wang2019tssos, wang2020chordal,sparse20,Weisser2018,magron2022sparse}. It is worth mentioning that to obtain the latter result using our general lifting strategy, we first derive a new sparse \german{} over the simplex  (Lemma~\ref{lem:sparseDelta}) which is of independent interest.

\subsubsection*{Non-SOS \germans{} for unbounded sets}

We contribute to the growing literature (see, Section~\ref{sec:copocert} and ~\citep{mai2021positivity}) on NNCs that are guaranteed to exist even if the underlying semialgebraic set is not compact.
In the unbounded case, large families of semialgebraic sets where Putinar's certificate definitely cannot be constructed are known~\cite{cimprivc2011}. Also, non-SOS \germans{} for unbounded sets exist only for very simple sets, such as P{\'o}lya's \german{}~\citep[][Sec. 2.2]{HardLP88} over the nonnegative orthant, which is based on nonnegative constants. Thanks to the lifting for unbounded sets (Theorem~\ref{thm:ineqGen}), we extend P{\'o}lya's \german{} to obtain \germans{} for polynomials on generic semialgebraic sets (Proposition~\ref{prop:genPolya}). Beyond using SOS polynomials, this allows the use of different base classes $\K$ such as SONC, SAG, and DSOS/SDSOS polynomials, to certify the nonnegativity over unbounded semialgebraic sets. This option  has not been explored in the literature.

A typical condition needed to guarantee the existence of an NNC for a polynomial on a compact semialgebraic set is that the polynomial must be \emph{positive} over the set (see e.g., Theorem~\ref{prop:genHand}).
Not surprisingly, one needs a bit more in the case of unbounded sets (see Example~\ref{ex:simpleex}). We show that in the latter case it is  sufficient for the polynomial to be {\em strongly positive} (see Definition~\ref{def:posInf}) over the set.
 Further, we show that this condition holds {\em generically}. That is, for almost every semialgebraic set and positive polynomial over the set  (Corollary~\ref{cor:generic}). Genericity is formally defined in Definition~\ref{def:genericity} following \citep{Nie2013, WangRational, lasserre2009convex}.

\looseness -1
It is important to mention that the lifting approach, the non-SOS Schm\"{u}dgen-type \germans{} (Proposition~\ref{prop:genHand}), and the~\germans{} terms analysis of Section~\ref{subsubsec:SDSOS_bounds},  turn out to be the stepping stone to positively answer an open question posed in~\citep[][Sec. 6]{dressler2017positivstellensatz}; namely, whether there exist {\em non-SOS Putinar-type} \germans{} which follow the structure of the fundamental Putinar's \german{}~\citep{Puti93}, but are based on a non-SOS class~\citep[see,][]{roebers2021sparse}.

Besides providing relevant examples, we illustrate our results by performing two sets of computational experiments. In Section~\ref{sec:SchNumerics}, we benchmark the lower bounds obtained for PO problems using the proposed non-SOS \schmu-type \german{} (Proposition~\ref{prop:genHand}) versus the ones obtained using Putinar's NNC~\eqref{eq:puti} with different base classes.
In particular, we compare our approach and approaches proposed in~\cite{lasserre2001global, ahmadi2019dsos} to compute the lower bounds. Further,  in Section~\ref{sec:sparseNumerics}, we show the advantages of the proposed {\em semi-sparse} SOS \german{} (Proposition~\ref{prop:sparse}) for PO problems which do not have any sparsity structure that can be easily exploited with the existing methodologies for sparse PO problems in the literature~\citep[e.g.,][]{waki2006sums,waki2008algorithm,lasserre2006convergent,wang2019tssos, wang2020chordal,sparse20,Weisser2018,magron2022sparse}.

\looseness -1
Additional computational experiments are presented in Examples~\ref{ex:unbounded_mp} and~\ref{ex:unbounded_new} to illustrate the methodology presented for unbounded sets.
Example~\ref{ex:unbounded_mp} illustrates how to use the P{\'o}lya's \german{} extension (Proposition~\ref{prop:genPolya}) to solve PO problems with unbounded feasible sets, and in particular, PO problems
for which  hierarchies based on  Putinar's \german{}~\eqref{eq:puti} fail to give a lower bound for the problem.
Example~\ref{ex:unbounded_new} illustrates that  the P{\'o}lya's \german{} extension (Proposition~\ref{prop:genPolya}) can be used to
obtain or closely approximate the optimal value of PO problems with unbounded feasible set using non-SOS classes of polynomials as the base class.

All the computational experiments mentioned above were done using MATLAB R2021a, Yalmip~\cite{Lofberg2004}, and some functions from SOSTOOLS~\citep{sostools} and DIGS~\citep{ghaddar2016dynamic}, on a computer with  processor Intel\textsuperscript{\textregistered} Core\textsuperscript{\texttrademark} i7-8665U CPU @ 1.90GHz and 16 GB of RAM. Semidefinite programs are solved with MOSEK, Version 9.3.20 \cite{mosek}, and linear programs are solved with Gurobi, Version 10.0.1  \cite{gurobi}. All the data and code used to generate the computational results presented in the article are publicly available on the Github webpage \url{https://github.com/OlgaKuryatnikova/PolyLift}.

\section{Preliminaries}
\label{sec:prelim}

We denote by $\R[x]:=\R[x_1,\dots,x_n]$ the set of $n$-variate polynomials with real coefficients, and by $\R_d[x]$ (respectively $\R_{=d}[x]$) the subset of $\R[x]$ of  polynomials of degree not larger than (resp. equal to)~$d$.
For any $d \in \N$ and $\alpha \in \N^n$, let
${d \choose \alpha}$ denote the multinomial coefficient  ${d \choose \alpha} := \frac{d!}{(d-e\tr \alpha)!\alpha_1!\cdots \alpha_d!}$ and $\N^{n}_d := \{\alpha \in \N^{n}: e\tr \alpha \le d\}$, where $e$ denotes the vector of all-ones in appropriate dimension.
Then, given~$p(x) \in \R[x]$ with $\deg p \le d$, we can write
\begin{equation}
\label{eq:polysum}
p(x) = \sum_{\alpha \in \N^n_d} {d \choose \alpha} p_\alpha x^\alpha,
\end{equation}
for some $p_\alpha \in \R$, where $x^\alpha := x_1^{\alpha_1} \cdots x_n^{\alpha_n}$, for all
$\alpha \in \N^{n}_d$. We define $\|p\| = \max \{|p_\alpha|:  \alpha \in \N^{n}_d\}$ and for any $x \in \R^n$, let $|x| \in \R^n$ denote the component-wise absolute value of $x$. In particular, for any $\alpha \in \N^n$, $|x|^{\alpha} = |x^{\alpha}|$.

The following bound on the value of a polynomial will be useful throughout the article.

\begin{lemma}\label{lem:normIneqPol}
Let $p \in \R[x]$. For any $x \in \R^n$ we have
\[|p(x)| \le \|p\|(1+e \tr |x |)^{\deg p}.\]
\end{lemma}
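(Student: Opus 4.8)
The plan is to expand $p$ in the multinomial basis used in the preliminaries and bound it term by term, the key being to recognize the resulting coefficient sum as the multinomial expansion of $(1 + e\tr|x|)^{\deg p}$. Set $d := \deg p$ and write $p(x) = \sum_{\alpha \in \N^n_d} C_{d,\alpha}\, p_\alpha\, x^\alpha$. Applying the triangle inequality together with the definition $\|p\| = \max\{|p_\alpha| : \alpha \in \N^n_d\}$ and the fact that $C_{d,\alpha} \ge 0$ gives
\[
p(x) \le \sum_{\alpha \in \N^n_d} C_{d,\alpha}\, |p_\alpha|\, |x|^\alpha \le \|p\| \sum_{\alpha \in \N^n_d} C_{d,\alpha}\, |x|^\alpha,
\]
where $|x|^\alpha := |x_1|^{\alpha_1}\cdots|x_n|^{\alpha_n}$.

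It then remains to evaluate the coefficient sum. I would introduce the slack index $\alpha_0 := d - e\tr\alpha \ge 0$, so that $(\alpha_0, \alpha_1, \dots, \alpha_n)$ ranges over all nonnegative integer tuples summing to $d$ as $\alpha$ ranges over $\N^n_d$. Since $C_{d,\alpha} = \frac{d!}{(d - e\tr\alpha)!\,\alpha_1!\cdots\alpha_n!} = \frac{d!}{\alpha_0!\,\alpha_1!\cdots\alpha_n!}$, the multinomial theorem applied to $(1 + |x_1| + \cdots + |x_n|)^d$ yields
\[
\sum_{\alpha \in \N^n_d} C_{d,\alpha}\, |x|^\alpha = (1 + e\tr|x|)^d.
\]
Combining the two displays gives $p(x) \le \|p\|\,(1 + e\tr|x|)^{\deg p}$, as claimed.

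The only real obstacle is the bookkeeping in the multinomial step: one must verify that the slack variable $\alpha_0 = d - e\tr\alpha$ accounts precisely for the constant term $1$ in the base $(1 + e\tr|x|)$, and that the map $\alpha \mapsto (\alpha_0, \alpha)$ is a bijection between $\N^n_d$ and the set of compositions of $d$ into $n+1$ nonnegative parts, so that each monomial $|x|^\alpha$ is counted exactly once with the correct coefficient. Everything else is routine, and no positivity or structural assumption on $p$ is needed since the bound passes through $|p(x)|$.
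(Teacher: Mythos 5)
Your proof is correct and follows essentially the same route as the paper's: expand $p$ in the multinomial basis, apply the triangle inequality and the definition of $\|p\|$, and recognize the coefficient sum as (at most) the multinomial expansion of $(1+e\tr|x|)^{\deg p}$. The only difference is that you spell out the slack-variable bijection and obtain the last step as an equality where the paper simply states the inequality, which is a harmless refinement.
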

\begin{proof}
Given $p \in \R[x]$ with $\deg p = d$, and $x \in \R^n$ we have
\begin{align*}
|p(x)| &\le \sum_{\alpha \in \N^n_d} {d \choose \alpha}|p_\alpha ||x|^\alpha \le \|p\| \sum_{\alpha \in \N^n_d} {d \choose \alpha}|x|^\alpha   =  \|p\| (1+e \tr |x |)^d. \end{align*}
\end{proof}

When appropriate, we will use the following notation for ease of presentation. Given  $g_1,\dots,g_m \in \R[x]$ and $\alpha \in \N^{m}_d$ we use $g(x)^{\alpha} := \prod_{j=1}^m g_j(x)^{\alpha_j}$ (in particular, $x^\alpha = \prod_{j=1}^m x_j^{\alpha_j}$). Also, we use $g \in \R[x]^m$  to denote the array $g := [g_1,\dots,g_m]\tr$ of polynomials.

For any $S \subseteq \R^n$, we define
\[
\P(S) = \{ p \in \R[x]: p(x) \ge 0 \text{ for all } x \in S\},
\]
as the set of polynomials nonnegative on $S$. Similarly, we define
\[
\P^+(S) = \{ p \in \R[x]: p(x) > 0 \text{ for all } x \in S\},
\]
as the set of polynomials positive on $S$.
Furthermore, let $\P_d(S):=\P(S)\cap\R_d[x]$
(resp. $\P^+_d(S):=\P^+(S)\cap\R_d[x]$)
denote the set of polynomials of degree at most~$d$ that are nonnegative
(resp. positive)
on $S$. As mentioned in the introduction, given~$g \in \R^m[x]$, we denote by $S_g \in \R^n$ the semialgebraic set defined by~$g$; that is, $S_g = \{x \in \R^n: g_1(x) \ge 0, \dots, g_m(x) \ge 0\}$.

We frequently refer to a fundamental class of nonnegative polynomials, namely, the {\em sum-of-squares} (SOS) polynomials~\citep{BlekPT13}.  A polynomial $p \in \R_{2d}[x]$ is SOS if $p(x) = \sum_{i\le l} q^i(x)^2$ for some $q^1,\dots,q^l \in \R_d[x], l \in \N$.

Central to our discussion are two simple  sets. The first set is a \emph{box} in~$\R^n$. Given $L, U \in \R^n$ define
\[
\Box^n_{L,U}:=\{x\in \R^n: L\le x\le U\}.
\]
The second set is a \emph{generalized simplex} in~$\R^n$. Given $M \in \R$ and $L \in \R^n$, define
\[
\Delta^n_{L,M}:=\{x\in \R^n: x\ge L, \ e\tr x\le M\}.
\]
 Note that $\Delta^n_{\mathbf{0},1}$ is the {\em simplex}. For brevity, we will abuse the notation by referring to $\Delta^n_{L,M}$ as a simplex.
Through the article, we will use the following bounds on the value of a polynomial over these simple  sets.

\begin{lemma}\label{lem:hatUR} Let $p \in \R[x]$. Then, the following holds.
\begin{enumerate}[label=(\alph*)]
\item Box case: Let $L,U \in \R^n$ such that $L \le U$ be given. Then, we have that
\[
\displaystyle \max_x \left \{|p(x)|: L \le x \le U \right \}
\leq \|p\|(1+e\tr B)^{\deg p},
\]
where $B_i=\max \{|U_i|,|L_i|\}$ for $i\in\{1,\dots,n\}$.  \label{lem:hatUR_b}
\item Simplex case: Let $L,M \in \R^n$ such that $L \le M$ be given. Then, we have that
\[
\displaystyle \max_x \left  \{|p(x)|: x\ge L, e\tr x \le M \right \} \leq  \|p\|(1+M + e\tr(|L|-L))^{\deg p}.
\]
\label{lem:hatUR_c}
\end{enumerate}
\end{lemma}
\begin{proof}  Statement \ref{lem:hatUR_b} follows from Lemma~\ref{lem:normIneqPol}. Next, we prove statement  \ref{lem:hatUR_c}.
 Let $x\in \R^n$ be such that $L \le x$ and $e\tr x \le M$. Then $e\tr |x| \le e\tr(x-L) + e\tr |L| \le M + e\tr(|L|-L)$. From Lemma~\ref{lem:normIneqPol} we obtain that
\[
|p(x)| \le \|p\|(1+e\tr|x|)^{\deg p} \le \|p\|(1+M + e\tr(|L|-L))^{\deg p}.
\]
\end{proof}

\section{The compact case}
\label{sec:compactcase}

Extensive work in the area of PO, starting with the seminal work of~~\citet{lasserre2001global}, has shown how SOS \germans{}, such as \schmu's \german{}~\citep{schmudgen1991k} and Putinar's \german{}~\citep{Puti93}, can be used as a fundamental tool to develop solution methodologies for PO problems with a compact semialgebraic feasible set. One of the drawbacks of using these SOS \germans{} is that checking membership in the class of SOS polynomials of a fixed degree requires the solution of an SDP; a task that in practical terms is computationally expensive~\citep[see, e.g.,][]{lasserre2006convergent}. As a result, recent literature has focused on developing NNCs that either use alternative base classes~\citep[see, e.g.,][]{dressler2017positivstellensatz, chandrasekaran2016relative} or that have special structure~\citep[see, e.g.,][]{lasserre2006convergent,wang2019tssos}, to make them more amenable, optimization-wise, to be used in solution methodologies for PO problems. In what follows, we
develop a method to obtain new such certificates.

\subsection{Main result}
We begin by presenting the following key Proposition on certifying the nonnegativity of a polynomial over a compact set intersected with a semialgebraic set defined by equality constraints; that is, a {\em variety} of the form $V=\{x\in \R^n:  h_1(x) = 0,\dots,  h_m(x) = 0\}$, where $h \in \R^m[x]$.

\begin{proposition}[{\citep[][Cor. 2]{PenaVZ08}}]   \label{prop:CompEqu} Let $S \subset \R^n$   be a non-empty compact set, and let $h_1,\dots,h_m,p \in \R[x]$ be such that $h_j\in \P(S)$ for  $j = 1,\dots,m$. Define $d_{\max}=\max \left \{\deg h_1,\dots,\deg h_m, \deg p \right \}$ and $V=\{x\in \R^n:  h_1(x) = 0,\dots,  h_m(x) = 0\}$. Then  $p\in  \P^+(S\cap V)$ if and only if there are $F \in \P^+_{d_{\max}}(S)$ and $\alpha_j\in \R_{d_{\max}-\deg h_j}[x]$ for  $j = 1,\dots,m$ such that
\[p(x) = F(x)+\sum_{j=1}^m \alpha_j(x) h_j(x).\]
\end{proposition}

In our context, $F$ in Proposition~\ref{prop:CompEqu} can be interpreted as a lifting for $p$ from the intersection of a compact set $S$ and a variety $V$ to $S$. Namely, Proposition~\ref{prop:CompEqu} reduces the problem of certifying the nonnegativity of~$p$ over the {\em complex} set $S \cap V$, to the problem of certifying nonnegativity of~$F$ over the {\em simpler } set $S$.
Next, we use Proposition~\ref{prop:CompEqu} to prove our main result on the compact case. Namely, given $p \in \R[x]$, $g \in \R^m[x]$ such that $S_g$ is compact, and $T \subset \R^{n} \times \R^m_+$ such that condition~\eqref{eq:hatSg} holds, we use the proposition to construct a lifting $F \in \P^+(T)$ of~$p$. We use such liftings to construct certificates of nonnegativity over~$S_g$ from certificates of nonnegativity over~$T$, as explained in Section~\ref{sec:approach}.

\begin{theorem}
\label{thm:mainCompGen}
Let $p \in \R[x]$, and $g_1,\dots,g_{m} \in \R[x]$ be such that  $S=\{x\in \R^n: g_1(x)\ge 0,\dots,g_m(x) \ge 0\}$ is a non-empty compact set. Consider another compact set $T \subset \R^{n} \times \R^m_+$ such that $\left \{(x,g(x)):x \in S \right \} \subseteq T$, and define $d_{\max}=\max \left \{2\deg g_1,\dots,2\deg g_m,\deg p  \right \}$. Then, $p\in \P^+(S)$ if and only if there exists $F \in \P^+_{d_{\max}}(T)$ such that
\begin{equation}
\label{eq:compactlifting}
p(x) = F(x,g(x)).
\end{equation}
\end{theorem}
\begin{proof}
In one direction, note that $F \in \P^+_{d_{\max}}(T)$,  $\left \{(x,g(x)):x \in S \right \} \subseteq T$, and~\eqref{eq:compactlifting}, readily imply that $p\in \P^+(S)$. For the other direction,
let $d_j = \deg g_j$, $j=1,\dots,m$. Define $\hat g_j: \R^{n+m} \to \R$ as $\hat g_j(x,u): = (g_j(x)-u_j)^2$ for $j = 1,\dots,m$. Now, define  \[U:=\left \{(x,u) \in T: \hat g_j(x,u)=0 \text{ for } j = 1,\dots,m \right \}.\]
Let $q(x,u):=p(x)\in \R_{d_{\max}}[x,u]$. By definition of $\hat g_1,\dots,\hat g_m$, we have that $\hat g_j \in \P(T) \text{ for } j = 1,\dots,m $. From the definition of  $U$, it follows that $q\in \P_{d_{\max}}^+(U)$.
Then, Proposition~\ref{prop:CompEqu} implies that there is $F \in \P^+_{d_{\max}}(T)$ and  $\alpha_j \in \R_{d_{\max}-2d_j}[x,u]$,  for all $j = 1,\dots,m$ such that
\begin{align*}
p(x) = q(x,u) = F(x,u) +  \sum_{j=1}^m\alpha_j(x,u) \hat g_j(x,u).
\end{align*}
Replace $u \leftarrow g(x)$  to obtain
$
p(x) = q(x,g(x)) = F(x,g(x)).
$
\end{proof}

Notice that the expression~\eqref{eq:compactlifting} allows us to reduce the problem of certifying the nonnegativity of $p$ on $S_g \subset \R^n$, to the problem of certifying the nonnegativity of a polynomial $F$ over an appropriate higher-dimensional set $T \subset \R^n \times \R^m_+$. We use the term reduce as there is freedom in choosing $T$. In particular, one might pick $T$ as a simpler  set than $S_g$, taking advantage of \germans{} valid on $T$ to certify the nonnegativity of $p$ on $S_g$.
 Next, we specialize Theorem~\ref{thm:mainCompGen} to the case when $T$ is a box or a simplex.

\begin{corollary}\label{cor:mainBasicCompact} Let $p \in \R[x]$, and $g_1,\dots,g_{m} \in \R[x]$ be such that  $S=\{x\in \R^n: g_1(x)\ge 0,\dots,g_m(x) \ge 0\}$ is a {non-empty} compact set. Define $d_{\max}= \allowbreak \max \{2\deg g_1, \allowbreak \dots, \allowbreak 2\deg g_m, \allowbreak \deg p\}$.
\begin{enumerate}[label=(\alph*)]
  \item \textbf{Box:} Let $L,U \in \R^n$ be such that  $S \subseteq \Box^n_{L,U}$. For $j\in \{1,\dots,m\},$ let
  $U^g_j \ge \|g_j\|(1+e\tr B)^{\deg g_j}$, where $B_i \ge \max \{|U_i|,|L_i|\}$ for $i\in\{1,\dots,n\}$. Define $\hat U:= [U\tr, (U^g)\tr]\tr$, $\hat L:= [L\tr,\mathbf{0}\tr]\tr$.  Then, $p\in \P^+(S)$ if and only if  there exists $F \in \P^+_{d_{\max}}(\Box^{n+m}_{\hat U, \hat L})$ such that \label{cor:mainBasicCompactBox}
$p(x) = F(x,g(x))$.
\label{item:boxcert}

\item \textbf{Simplex:} Let $L \in \R^m$ and $M \in \R$ be such that  $S \subseteq \Delta^n_{L,M}$, and let $\hat M \ge M+\sum_{j=1}^m (1+M + e\tr(|L|-L))^{\deg g_j}\|g_j\|$, $\hat{L}:=[L\tr, \boldsymbol{0}\tr]\tr$. Then $p\in \P^+(S)$ if and only if  there exists $F \in \P^+_{d_{\max}}(\Delta^{n+m}_{\hat L, \hat M})$ such that \label{cor:mainBasicCompactSimplex}
$p(x) = F(x,g(x))$.
\label{item:simplexcert}
\end{enumerate}

\begin{proof}
In case~\ref{item:boxcert} (resp. case~\ref{item:simplexcert}), Lemma~\ref{lem:hatUR}\ref{lem:hatUR_b} (resp. Lemma~\ref{lem:hatUR}\ref{lem:hatUR_c}) implies that the set $T= \Box^{n+m}_{\hat U, \hat L}$ (resp. $T=\Delta^{n+m}_{\hat L, \hat M}$) satisfies the conditions of Theorem~\ref{thm:mainCompGen}. Thus, the statement follows from Theorem~\ref{thm:mainCompGen}.
\end{proof}

\begin{remark}
Lemma~\ref{lem:hatUR} is instrumental in proving Corollary~\ref{cor:mainBasicCompact}. It also implies that the representation of $p$ obtained in Corollary~\ref{cor:mainBasicCompact} is indeed an NNC for $p$ on $S$.
\end{remark}
\end{corollary}

Corollary~\ref{cor:mainBasicCompact} allows translating NNCs known for simple  sets, such as a box or a simplex, to NNCs over general \emph{compact} sets.
Next, we illustrate the potential of this result to obtain new NNCs. We use the box as the simple set in  Section~\ref{sec:SchGen}, and the simplex as the simple set in Section~\ref{subsec:sparse}.

\subsection{Non-SOS Schm\"{u}dgen-type \germans{}}
\label{sec:SchGen}
  \schmu's \german{}~\citep{schmudgen1991k} states that given $g_1, \dots, g_m \in \R[x]$ such that $S_g$ is compact, and  $p \in \P^+(S_g)$, there exist SOS polynomials $\sigma_{\alpha}$, $\alpha \in \{0,1\}^m$, such that
\begin{equation}
\label{eq:schmu}
p(x) = \sum_{\alpha \in \{0,1\}^m} \sigma_{\alpha}(x)g(x)^{\alpha}.
\end{equation}
Since $g_j(x) \ge 0$ for any $x \in S_g$, $j=1,\dots,m$ and $\sigma_{\alpha}(x) \ge 0$ for any $x \in \R^n$, $\alpha \in \{0,1\}^m$, the expression~\eqref{eq:schmu} is an NNC; that is, it shows that $p \in \P(S_g)$. The base class in this NNC is the SOS polynomials.

As mentioned, using SOS in optimization is computationally expensive since it usually requires solving SDPs. Hence, it is a topic of great interest to develop alternatives to SOS \germans{}, such as \eqref{eq:schmu}, by replacing the SOS polynomials with other base classes. In fact,
\germans{} following the form of~\eqref{eq:schmu} have been obtained using SONC~\citep{dressler2017positivstellensatz}, SAG~\cite[][Thm. 4.2]{chandrasekaran2016relative}, and hyperbolic~\cite{hyperb1} polynomials as base classes. We refer to these results as \emph{non-SOS \schmu-type \germans{}}.

Next,
departing from Handelman's \german{}~\cite{Hand88} (see Proposition~\ref{prop:HandNneg}) to certify nonnegativity over a box,
we use the lifting to a box in Corollary~\ref{cor:mainBasicCompact}\ref{item:boxcert} to derive a {\em general} non-SOS \schmu-type \german{} (Proposition~\ref{prop:genHand}); that is,
unlike the results in~\citep{dressler2017positivstellensatz, chandrasekaran2016relative} this \german{} holds for more general base classes $\K$. Specifically, any class of polynomials $\K$ containing the set of nonnegative constants
($\R_+ \subseteq \K$) fits as the base class.
As a result, our
 non-SOS Schm\"{u}dgen-type \germans{} provide an answer to an open question underlying the work in~\citep{dressler2017positivstellensatz, chandrasekaran2016relative}; namely, for which base classes~$\K$, there exists a~$\K$ Schm\"{u}dgen-type \germans{} can be proved on a compact set $S_g$.
Since  nonnegative constants are SONC and SAG polynomials (see Figure~\ref{fig:polys}),
Proposition~\ref{prop:genHand}  implies the SONC \german{} derived in~\cite[][Thm. 4.8]{dressler2017positivstellensatz} and the SAG \german{} derived in~\cite[][Thm. 4.2]{chandrasekaran2016relative}. Further, nonnegative constants are DSOS and SDSOS polynomials (see Figure~\ref{fig:polys}), a fact that is used in Corollary~\ref{cor:SDSOS} to derive new \germans{} based on these classes of polynomials.

An advantage of the \german{} in Proposition~\ref{prop:genHand} is its flexibility to derive NNCs with different characteristics due to the fact that there is a lot of freedom in choosing the base class~$\K$. In fact,~$\K$ does not have to be a class of nonnegative polynomials (see Remark~\ref{rem:base_class}), as $\K$ only needs to contain the set of nonnegative constants.
Note that membership in the class of SONC polynomials of fixed degree can be tested using {\em geometric programming} (GP)~\citep{iliman2016lower, papp2019duality}, {\em second-order cone programming} (SOCP)~\citep{wang2020second}, or {\em relative entropy programming} (REP)~\citep{dressler2017positivstellensatz}. Also,
membership in the class of SAG polynomials of fixed degree can be tested using REP~\citep{chandrasekaran2016relative, karaca2017repop} and geometric programming~\citep{ghasemi2012lower}. Further, membership is the class of SDSOS (resp. DSOS) polynomials of fixed degree can be tested using second-order cone (resp. linear) programming.
Consequently, our general non-SOS \german{} provides a variety of computational alternatives to using SDP for solving PO problems (see Section~\ref{sec:SchNumerics}). In what follows, we formally derive this result.

\begin{proposition}[Handelman's \german~\citep{Hand88}] \label{prop:HandNneg}
Let $A \in \R^{m \times n}$, $b \in \R^m$ be such that $S = \{x \in \R^n:Ax \le b\}$ is a non-empty polytope. If $p\in \P^+(S)$, then there exists
$M \in \N$ and $c_{\alpha} \ge 0$ for all $\alpha \in \N^{m}_M$ such that
\begin{align*}
p(x) =
 \sum_{\alpha \in \N^{m}_M} c_{\alpha}(b-Ax)^{\alpha}.
\end{align*}
\end{proposition}
An explicit value of the bound $M$ in Proposition~\ref{prop:HandNneg} is provided in~\citep[][Thm.~3]{PoweR01}.

We obtain the desired result below using Proposition~\ref{prop:HandNneg} and Corollary~\ref{cor:mainBasicCompact}\ref{item:boxcert} (i.e., setting $T$ as the box for the lifting in Theorem~\ref{thm:mainCompGen}).

\begin{proposition}[Non-SOS \schmu-type \german{}]
\label{prop:genHand}
Let a base class $\K$ satisfying  $\R_+ \subseteq \K \subset \R[x] $ be given. Let $g_1,\dots,g_m \in \R[x]$ be such that $S=\{x\in \R^n:  g_1(x)\ge 0,\dots,g_m(x)\ge 0\}$ is a non-empty compact set. Let $L,U \in \R^n$ be such that  $ S \subseteq \{x \in \R^n:  L \le x \le U\}$. Given $p\in \P^+(S)$,
 there exist $r \ge 0$ and $c_{\alpha, \beta, \gamma} \in \K$ for $(\alpha, \beta,\gamma) \in \N^{2n+m}_{r}$ such that
\begin{equation}
p(x) =
 \sum_{(\alpha, \beta,\gamma ) \in \N^{2n+m}_{r} } c_{\alpha, \beta,\gamma}(x) (x-L)^{\alpha} (U-x)^{\beta}g(x)^\gamma. \label{eq:genHand}
\end{equation}
\end{proposition}
\begin{proof} For all $j =  1,\dots,m$, consider $U^g_j>\|g_j\|(1+e\tr B)^{\deg g_j}$, where $B_i:=\max \{|U_i|,|L_i|\}$ for $i = 1,\dots,n$. Then by Corollary~\ref{cor:mainBasicCompact}\ref{cor:mainBasicCompactBox} and Proposition~\ref{prop:HandNneg} there exists $r \ge0$ and $c_{\alpha, \beta, \gamma,\delta} \ge 0$ for $(\alpha, \beta, \gamma,\delta ) \in \N^{2n+2m}_{r}$ such that
\begin{equation*}
 p(x) =
 \sum_{(\alpha, \beta, \gamma,\delta) \in \N^{2n+2m}_{r} } c_{\alpha,\beta, \gamma,\delta} (x-L)^{\alpha}(U - x)^{\beta}g(x)^{\gamma}(U^g-g(x))^\delta.
\end{equation*}
To finish the proof, we notice that $U^g_j-g_j(x)\in \P^+(\Box^n_{L,U})$  by Lemma~\ref{lem:hatUR}\ref{lem:hatUR_b}, for $j = 1,\dots,m$. Hence we can apply Proposition~\ref{prop:HandNneg} to $(U^g-g(x))^\delta$, to express it as a polynomial in $(x-L)$ and $(U-x)$.
\end{proof}

\begin{remark} \label{rem:base_class} When the base class $\K$ consists of polynomials which are globally nonnegative, the expression~\eqref{eq:genHand} is clearly an NNC. However,  expression~\eqref{eq:genHand} provides an NNC even when the polynomials in $\K$ are not globally nonnegative.
Namely, when $\K \subseteq \P(\Box^n_{L,U})$, with $U, L$ given in Proposition~\ref{prop:genHand}.
For instance,  $\K$ can be the polynomials of the form $q(x-L)$, where $q$ is a SAG polynomial.
\end{remark}

The methodology used to construct non-SOS \schmu-type \german{} in Proposition~\ref{prop:genHand} captures the methods of~\cite{Schweighofer2002,Schweighofer2005,ComplexitySchm,ComplexityPut}, which obtain various \germans{} using  algebraic geometry tools to reduce the problem of certifying nonnegativity over a given compact semialgebraic set~$S$ to certifying nonnegativity over the simplex. There, Polya's \german{}~\citep[][Sec. 2.2]{HardLP88} (Theorem~\ref{thm:polya}) is applied to construct NNCs on~$S$.
In fact, thanks to Corollary~\ref{cor:mainBasicCompact}\ref{item:simplexcert}, we could also use the simplex as the simple  set and use Polya's \german{} as in the papers mentioned above.
However, we choose the box since the resulting certificates directly connect to  the recent literature~\cite{chandrasekaran2016relative,dressler2017positivstellensatz}.
One advantage of our approach is that Proposition~\ref{prop:genHand} reduces the work of obtaining
the \germans{} proven in~\cite{Schweighofer2002,Schweighofer2005,chandrasekaran2016relative,dressler2017positivstellensatz}, to checking that the corresponding base class contains the nonnegative constants, which is straightforward in these cases.

\subsubsection{SDSOS \schmu-type \germans{}}
\label{subsubsec:SDSOS}
Several recent papers  have considered  NNCs with DSOS~\citep[][Def. 3.1]{ahmadi2019dsos} and SDSOS~\citep[][Def. 3.2]{ahmadi2019dsos} polynomials used as the base class \cite{ahmadi2019dsos,josz2017counterexample,ahmadi2017response}.
These  subclasses of SOS are convenient for numerical optimization as they can be represented using linear and second-order cone programming instead of SDP (see Figure~\ref{fig:polys}).
However, as shown in~\cite{josz2017counterexample,ahmadi2017response} and Example~\ref{ex:no-naive}, unlike the SOS Putinar's NNC, those  NNCs are not guaranteed to exist for positive polynomials over compact semialgebraic sets, even when the associated  quadratic module is Archimedean~\citep[see, e.g.,][]{scheiderer2009}.
In contrast, setting the base class $\K$ in Proposition~\ref{prop:genHand} to be the DSOS/SDSOS polynomials, we obtain NNCs of the form~\eqref{eq:genHand} that are guaranteed to exist for all positive polynomials over compact sets.

\begin{corollary}[DSOS/SDSOS \schmu-type \german{}]
\label{cor:SDSOS}
Let $g_1, \dots, g_m \in \R[x]$ be such that $S=\{x\in \R^n:  g_1(x)\ge 0,\dots,g_m(x)\ge 0\}$ is a non-empty compact set. Let $L,U \in \R^n$ be such that  $ S \subseteq \{x \in \R^n:  L \le x \le U\}$.  If $p\in \P^+(S)$,  then there exist $r \ge 0$ and SDSOS (or DSOS) polynomials
$q_{\alpha, \beta, \gamma} \in \R_{r}[x]$ for $(\alpha, \beta,\gamma) \in \N^{2n+m}_{r}$ such that
\begin{equation}
p(x) =
 \sum_{(\alpha, \beta,\gamma ) \in \N^{2n+m}_{r} } q_{\alpha, \beta,\gamma}(x) (x-L)^{\alpha} (U-x)^{\beta}g(x)^\gamma. \label{eq:SDSOS_gen}
\end{equation}
\end{corollary}

 DSOS/SDSOS certificates from \cite{ahmadi2019dsos} provide a sequence of increasing lower bounds for various PO problems when used in~\eqref{eq:NNC-OPT}. However, these bounds do not necessarily converge to the optimal PO problem's (objective) value~\cite{josz2017counterexample,ahmadi2017response}. In contrast, Corollary~\ref{cor:SDSOS} provides a hierarchy of DSOS/SDSOS-based lower bounds that are guaranteed to converge
to the optimal PO problem's value.

The computational benefits of those bounds are shown in Section~\ref{sec:SchNumerics}, where we look at a wide range of PO problems, including problems from the MINLPLib Library  (\url{http://www.minlplib.org/instances.html}) and the class of non-convex quadratic problems introduced in~\citep{yang2018quadratic}.
 For all problems, the LMI hierarchies obtained using~\eqref{eq:SDSOS_gen} performed better than the ones proposed in~\cite{lasserre2001global, ahmadi2019dsos}, in terms of the trade-off between the running times and the quality of the numeric approximation of the objective value. Similar positive numerical results are presented by~\citet{kuang2017alternative}.

\subsubsection{Importance of explicit upper and lower bounds in non-SOS \germans{}}
\label{subsubsec:SDSOS_bounds}

\looseness -1
An essential difference between the NNCs~\eqref{eq:genHand}  from Proposition~\ref{prop:genHand} and existing NNCs such as the \schmu{} NNC~\eqref{eq:schmu} is the explicit dependence on the upper ($U$) and lower ($L$) bounds on elements in $S_g$. These redundant upper and lower bounds appear when lifting to a box. Thus, it is natural to wonder whether they are a byproduct of the method or  necessary to construct NNCs for all polynomials positive on $S$. Notice that the DSOS/SDSOS NNCs introduced in~\cite{ahmadi2019dsos}, which do not use such bounds, have already been  shown not to exist for some positive polynomials~\cite{josz2017counterexample,ahmadi2017response}. The results in this section show that using explicit bounds on elements in $S_g$ is crucial for the existence of non-SOS \schmu-type  certificates.  Thus, we provide an answer to an open question underlying~\citep{dressler2017positivstellensatz, chandrasekaran2016relative}; namely, what types of terms are needed to obtain non-SOS Schm\"{u}dgen-type \germans{}.

We show that a general form of SONC \schmu-type NNCs do not exist for a large class of polynomials positive on $S_g$ if redundant lower and upper bounds on its elements are not explicitly included in the NNC terms (see, eq.~\eqref{eq:schm_josz}). Because the SONC polynomials contain the SDSOS and the DSOS polynomials~\citep[see, e.g.,][Lem. 4.1]{kurpisz2019new}, and because the Putinar's NNC is a subclass of the \schmu{} NNC, the same statement is true for DSOS/SDSOS/SONC Putinar/\schmu{}'s NNCs.

\looseness -1
A polynomial $p \in \R[x]$ is a SONC polynomial~\citep[see, e.g.,][]{wang2018nonnegative,dressler2017positivstellensatz, dressler2018optimization, iliman2016amoebas} if $p=\sum_{i=1}^s p_i$ for some $s>0$, where $p_i$ is a \emph{nonnegative circuit} polynomial for $i=1,\dots,s$.
Namely each $p_i(x)=\sum_{\alpha \in A_i}c_{\alpha}x^{2\alpha} - dx^{\beta}$,
where $A_i \subseteq \N^{n}$ is the vertex set of a simplex, $\beta$ lies in the interior of this simplex, and $c_{\alpha} > 0$,  for all $\alpha \in A_i$.
Nonnegative circuit polynomials are characterized in~\citep[][Thm. 3.8]{iliman2016amoebas} based on their {\em circuit number}.
This characterization leads to membership tests for SONC polynomials based on GP~\citep{dressler2019approach, iliman2016lower, papp2019duality}, or on SOCP~\citep{magron2023sonc, wang2020second}.

Let $g_1,\dots,g_m \in \R[x]$, and let $p\in \R[x]$ be positive on $S_g$. Consider  SONC NNCs for $p$ on $S_g$ of the form
\begin{align}
\|x\|^{2k}p(x)= \sum_{\beta \in \N^{m}_{r} } \sigma_\beta(x) g(x)^\beta, \label{eq:schm_josz}
\end{align}
where $k,r \ge 0$ and for each $\beta \in \N^{n}_{r}$, $\sigma_\beta$ is a  SONC polynomial.
Notice that in~\eqref{eq:schm_josz} we allow for the denominator $\|x\|^{2k}$, but do not explicitly use upper and/or lower bounds for $x \in S_g$.

Next, we prove that certificate~\eqref{eq:schm_josz} does not exist for a large class of $g's$ and $p's$.
In particular, Example~\ref{ex:no-naive} shows that when $S_g$ is the $n$-dimensional ball of radius $R$, the NNC~\eqref{eq:schm_josz} fails even in the case
when $p$ is a quadratic positive polynomial. Our approach is to apply a linear operator from $\R[x]$ to $\R$ to both sides of~\eqref{eq:schm_josz}.
The operator is constructed such that the right-hand side of~\eqref{eq:schm_josz} always gives a nonnegative value, but this is not true for the left-hand side.
For any $\alpha \in \N^n$ we define
\begin{align}
y_\alpha:=\begin{cases}1, & \text{if } |\alpha|  \text{ is odd } \\ (-1)^{\alpha_1}, & \text{otherwise.}     \end{cases} \label{def:mon_seq}
\end{align}
Let $L_y$ be the unique linear operator $L_y: \R[x] \to \R$ such that $L_y(x^\alpha)= y_\alpha$ for all $\alpha\in \N^n$.
First, we show that if a polynomial $p$ has representation~\eqref{eq:schm_josz}, then $L_y(p)$ must be nonnegative (see Proposition~\ref{prop:posLy}).
Then we construct a family of polynomials $p$ positive on $S$ such that  $L_y(p) < 0$ (see Lemma~\ref{lem:L_neg}).

\begin{proposition}\label{prop:posLy} For $j = 1,\dots,m$, let $g_j(x) = \hat g_j(x_1^2,\dots,x_n^2)$  for some polynomial $\hat g_j(x) \in \R[x]$ with $\hat g_j(1,\dots,1) \ge 0$.
If $p\in \R[x]$ has  representation $\eqref{eq:schm_josz}$, then $L_y(p) \ge 0$.
\end{proposition}
\begin{proof}
The proof is based on the following two claims.
\\
{\bf Claim 1. }
If $\sigma\in \R[x]$ is a SONC polynomial, then $L_y(\sigma) \ge 0$.
\\
{\bf Claim 2. } 
Let $h,f\in \R[x]$ be $n$-variate polynomials, then
$
L_y \left (f(x_1^2,\dots,x_n^2)h(x)\right  ) =f \left (1,\dots,1  \right ) L_y(h(x)).
$

We prove the proposition using these two claims.  Applying $L_y$ to both sides of $\eqref{eq:schm_josz}$ and using Claims 1 and 2, we obtain
\begin{align*}
n^kL_y(p) & =  L_y\left(  \|x\|^{2k}p(x)\right) = \sum_{\beta \in \N^{n}_{r} } L_y\left(  \sigma_\beta(x) g(x)^\beta \right) \\
& = \sum_{\beta \in \N^{n}_{r} } L_y\left(\sigma_\beta(x) \hat g(x_1^2,\dots,x_n^2)^\beta \right) = \sum_{\beta \in \N^{n}_{r} } L_y\left(\sigma_\beta(x) \right) \hat g(1,\dots,1)^\beta \ge 0.
\end{align*}

To finish the proof we need to prove the two claims. To prove Claim 1, notice that by linearity, it is enough to consider nonnegative circuit polynomials. Let  $\sigma(x) = \sum_{\alpha \in A}c_{\alpha} x^{2\alpha} - dx^{\beta}$  where $c_\alpha \ge 0$.  Definition~\eqref{def:mon_seq} of $y_\alpha$ implies that
\begin{align*}
L_y ( \sigma(x) ) & = L_y \left (\sum_{\alpha}c_{\alpha} x^{2\alpha} - dx^{\beta} \right )  =  \sum_{\alpha}c_{\alpha} y_{2\alpha} - dy_{\beta}
 = \sum_{\alpha}c_{\alpha} - d y_{\beta}  = \sigma(y_{\beta},1,\dots,1)  \ge 0.
\end{align*}

To prove Claim 2, first notice that for any $\alpha,\beta \in \N^n$, we have $L_y \left (  x^{2\alpha + \beta} \right ) =  y_{2\alpha+\beta} = y_\beta = L_y \left (  x^{\beta} \right )$.  Write  $f(x):=\sum_{|\alpha|\le \deg f} f_{\alpha} x^\alpha$ and $h(x):=\sum_{|\beta|\le \deg h} h_{\beta} x^\beta$. We have then
\begin{align*}
L_y \left (f(x_1^2,\dots,x_n^2)h(x)\right  ) \ & = \sum_{|\alpha|\le \deg f} \sum_{|\beta|\le \deg h} f_{\alpha}h_{\beta} L_y \left (  x^{2\alpha + \beta} \right )\\
 & = \sum_{|\alpha|\le \deg f} \sum_{|\beta|\le \deg h} f_{\alpha}h_{\beta} L_y \left (  x^{\beta} \right ) =  \sum_{|\beta|\le \deg f} f_{\beta} L_y \left (  h(x) \right ) = f (1,\dots,1)L_y(h(x)).
\end{align*}
\end{proof}

Based on Proposition~\ref{prop:posLy}, to show that~\eqref{eq:schm_josz} does not exist for a given $p \in \R[x]$, it is enough to show that $L_y(p)<0$. The following lemma is a tool to construct such  $p$.

\begin{lemma}\label{lem:L_neg}
Let $q(t)$ be a univariate polynomial. Define the  $n$-variate polynomial $p(x):= q(e\tr x)$.
 Then $L_y(p(x)) = \tfrac 12 (q(n) - q(-n) + q(n-2) + q(2-n))$.
\end{lemma}
\begin{proof}
Let $q(t) = \sum_{k=0}^{d}q_kt^k$. Then $L_y(p(x)) = \sum_{k=0}^{d}q_k L_y((e\tr x)^k)$. However,  note that $L_y((e\tr x)^k) = \allowbreak \sum_{|\alpha|=k}{k \choose \alpha} y_{\alpha}$. Thus, for odd $k$ we have $ L_y((e\tr x)^k) = \sum_{|\alpha|=k}{k \choose \alpha} = n^k$. And for even $k$ we have  $ L_y((e\tr x)^k) = \sum_{|\alpha|=k}{k \choose \alpha}(-1)^{\alpha_1} = (n-2)^k$.
Therefore $L_y(p(x)) = \sum_{k=0}^{d}q_k \tfrac 12(n^k - (-n)^k + (n-2)^k + (2-n)^k) = \tfrac 12 (q(n) - q(-n) + q(n-2) + q(2-n))$.
\end{proof}

Next using Proposition~\ref{prop:posLy} and Lemma~\ref{lem:L_neg} we give several examples of polynomials positive on the ball with no NNC of the form~\eqref{eq:schm_josz}.

\begin{example}[Positive on the ball but no NNC of the form~\eqref{eq:schm_josz}]\label{ex:no-naive} Let $n\ge 2$, and $R \ge n$. Define $g(x):=R - \|x\|^2 $ and $S:=\{x\in \R^n: g(x) \ge 0 \}$. From Proposition~\ref{prop:posLy} and Lemma~\ref{lem:L_neg} the following positive polynomials have not NNC of the form~\eqref{eq:schm_josz}.
\begin{enumerate}[label=(\alph*)]
\item Take $q_1(t): = (t-n)^2+1$. Then $p_1(x) = (e\tr x - n)^2+1$ is an SOS positive everywhere, but it has no representation of the form~\eqref{eq:schm_josz}.
\item Assume $n \le R < k \le n + 2\sqrt {n-1}$. Let $q_2(t) = (k-t)^2 - c$ where $ 0 < c < (k-R)^2$.
Then $q_2(k) = -c <0$, but if $t \le R$, we have $q_2(t) \ge (k-R)^2 - c > 0$. Thus $p_2(x) = q_2(e\tr x)$ is positive on $S$ as for any $x \in S$ we have $e\tr x \le \sqrt{n}\|x\| \le R$. However, $p_2(x)$ has no representation of the form~\eqref{eq:schm_josz}.
\item Let $q_0(t)$ be any univariate polynomial larger than $\tfrac{1}{n^4}$ on $[-R,R]$ . Take $q_3(t) = q_0(t)(t-n)^2(t-n+2)^2(t+n-2)^2+1$. Then $p_3(x) = q_3(e\tr x)$ is positive on $S$. But $p_3(x)$ has no representation of the form~\eqref{eq:schm_josz}.
\end{enumerate}
\end{example}

We conclude this section with an example where the SONC NNC~\eqref{eq:schm_josz} does not exist while the DSOS/SDSOS \schmu-type NNC~\eqref{eq:SDSOS_gen} does.

\begin{example}[SDSOS Schm{\"u}dgen-type  NNC] \label{ex:sdsos}
Let $n=2$ and $0 < c \le 4(\sqrt 2 - 1)^2$. Setting $k=2\sqrt{2}$, $R=2$, and $g(x):=2-x_1^2-x_2^2$, we obtain, from part (b) in Example~\ref{ex:no-naive}, that the polynomial $p_2(x):=(2\sqrt 2-x_1-x_2)^2 - c$  is positive on~$S$, but there is no certificate of the form~\eqref{eq:schm_josz} for the nonnegativity of $p_2(x)$ on~$S$.
In contrast, setting $U=[\sqrt 2,\sqrt 2]\tr$, we construct the following simple SDSOS NNC of the form~\eqref{eq:SDSOS_gen}:
\begin{align*}
p(x) = & 4(\sqrt 2 - 1)^2- c + (6-4\sqrt{2})g(x) +  a g(x)(\sqrt 2-x_1) +a g(x)(\sqrt 2-x_2)\\
& \qquad+ 2\left ( b(1- x_1)^2 +  (a-b)(1- x_2)^2 + \tfrac{1-a}{4}(x_1-x_2)^2 \right ) (\sqrt 2-x_1)\\
& \qquad\qquad+ 2\left ( b(1-x_2)^2 + (a-b)(1-x_1)^2 + \tfrac{1-a}{4}(x_1-x_2)^2 \right ) (\sqrt 2-x_2),
\end{align*}
where $a:=2 - \sqrt{2} > 1$ and $b:=\tfrac{5 -3\sqrt{2}}{4}$.
\end{example}

\subsubsection{Computational experiments}
\label{sec:SchNumerics}

\looseness-1
To benchmark the NNCs obtained from the non-SOS \schmu-type \german{} (Proposition~\ref{prop:genHand}), we perform computational experiments similar to the ones reported in~\cite{kuang2017alternative}. Namely, we use the NNC from equation~\eqref{eq:genHand}  to obtain lower bounds (recall~\eqref{eq:NNC-OPT} and the follow-up discussion) for instances of PO problems considered in~\cite{ghaddar2016dynamic,kuang2017alternative, kleniati2010partitioning, waki2006sums, waki2008algorithm}. We use as base classes SOS, DSOS, and nonnegative constant polynomials; when reporting our results in Table~\ref{tab:compare} they are respectively labeled ``\eqref{eq:genHand}-SOS$_r$'', ``\eqref{eq:genHand}-DSOS$_r$'' and  ``\eqref{eq:genHand}-${\R_+}_r$''.
The total degree (or {\em hierarchy level}) of the NNC~\eqref{eq:genHand} depends on the total degree $r'=e\tr \alpha + e\tr \beta + e\tr \gamma$ of the terms considered in the NNC and the degree of the base class polynomials $c_{\alpha,\beta,\gamma}$. We denote the total degree by $r$. In our implementation, we increase $r$  by increasing $r'$. We set the degree of the base class polynomials $c_{\alpha,\beta,\gamma}$ to be $\min \{2, 2\lfloor 0.5 (r- r') \rfloor \} \le 2$. For comparison purposes, we also use the {\em Lasserre hierarchy}~\cite{lasserre2001global}; that is, Putinar's NNC~\eqref{eq:puti} with total degree $r=2,4,\dots$ (labeled ``\eqref{eq:puti}-SOS$_r$'' in Table~\ref{tab:compare}). Also, we use the NNC with total degree $r=2,4,\dots$, obtained by replacing the SOS polynomials in~\eqref{eq:puti} by DSOS polynomials (labeled ``\eqref{eq:puti}-DSOS$_r$" in Table~\ref{tab:compare}).  The latter approach to computing lower bounds for PO problems was proposed and tested in~\cite{ahmadi2019dsos}. The total degree of the NNC~\eqref{eq:puti} increases with the degree of the polynomials $\sigma_j(x)$, $j=0,1,\dots,m$. In the implementation used here, we set the degree $\sigma_j(x)$ to be equal to $2\lfloor  0.5(r- \deg g_j) \rfloor$, for $j=0,1,\dots,m$ (where $g_0(x) := 1$).
A key characteristic of the considered PO problem instances is that some of these problems are ``challenging'', in the sense that a hierarchy level of $r>2$ is needed for the lower bounds obtained by the Lasserre hierarchy~\cite{lasserre2001global} to converge.
The bounds and times required to solve the LMI problems associated with each hierarchy are reported in Figure~\ref{fig:compare} and Table~\ref{tab:compare}.

\looseness -1
Figure~\ref{fig:compare} shows the trade-off between computation time and tightness of the bound. We observe that for each base class (i.e., SOS or DSOS polynomials) it is substantially better to use the proposed
NNC~\eqref{eq:genHand} than NNC~\eqref{eq:puti}. Sometimes, the Lasserre hierarchy is more effective than the non-SOS \schmu-type hierarchy. However, for larger PO problems, solving higher levels of the Lasserre hierarchy is computationally out of reach, while solving higher levels of the non-SOS \schmu-type hierarchies might be within computational reach, as instead of SDP solvers, SOCP or LP solvers could be used for this purpose.
In Table~\ref{tab:compare}, we can observe a behavior similar to the one illustrated in Figure~\ref{fig:compare} for other PO problems considered in~\cite{ghaddar2016dynamic,kuang2017alternative, kleniati2010partitioning, waki2006sums, waki2008algorithm}.

\begin{figure}[H]
  \begin{subfigure}[b]{0.5\linewidth}
    \centering
      \begin{adjustbox}{width=0.8\textwidth}
    \begin{tikzpicture}
\begin{semilogxaxis}[
  xmin=10^(-2.9),xmax=10^(3.27),
  title style={at={(0.5,1.08)},anchor=north,yshift=-0.1},
  title = Example 1,
  legend cell align=left,
  legend style={nodes={scale=0.7, transform shape}},
  xlabel={Time (s.)},
  grid = both,
  legend pos = south east
  ]
\addplot[mark=*,  dashed, mark options={fill=white, solid}, color = black, line width = \mylinewidth,  mark size=\mymarkersize] table [y=h1b, x=h1t]{Plot_Example1.txt};
\addlegendentry{\footnotesize \eqref{eq:puti}--SOS$_r$ (Lasserre~\cite{lasserre2001global})}
\addplot[mark=triangle*,  dashed, mark options={fill=white, solid}, color = black, line width = \mylinewidth,  mark size=\mymarkersize] table [y=h2b, x=h2t]{Plot_Example1.txt};
\addlegendentry{\footnotesize \eqref{eq:puti}--DSOS$_r$ (Ahmadi et al.~\cite{ahmadi2019dsos})}
\addplot[mark=*, mark options={fill=black, solid}, color = black, line width = \mylinewidth,  mark size=\mymarkersize] table [y=h3b, x=h3t]{Plot_Example1.txt};
\addlegendentry{\footnotesize \eqref{eq:genHand}-SOS$_r$ (Prop.~\ref{prop:genHand})}
\addplot[mark=triangle*, mark options={fill=black, solid}, color = black, line width = \mylinewidth,  mark size=\mymarkersize] table [y=h4b, x=h4t]{Plot_Example1.txt};
\addlegendentry{\footnotesize \eqref{eq:genHand}--DSOS$_r$ (Prop.~\ref{prop:genHand})}
\addplot[mark=square*, mark options={fill=black, solid}, color = black, line width = \mylinewidth,  mark size=\mymarkersize] table [y=h5b, x=h5t]{Plot_Example1.txt};
\addlegendentry{\footnotesize \eqref{eq:genHand}-${\mathbb{R}_+}_r$ (Prop.~\ref{prop:genHand})}
 \addplot[mark=none, loosely dashdotted, color = gray, line width = 0.3mm] coordinates {(10^(-2.9), -1.57) (10^(3.27), -1.57)};
  \addlegendentry{\footnotesize Optimal Value}
\end{semilogxaxis}

\end{tikzpicture}
\end{adjustbox}

    \label{fig.compare:a}
  \end{subfigure}
  \begin{subfigure}[b]{0.5\linewidth}
    \centering
     \begin{adjustbox}{width=0.8\textwidth}
   \begin{tikzpicture}
\begin{semilogxaxis}[
  xmin=10^(-2.5),xmax=10^(3.27),
  title style={at={(0.5,1.08)},anchor=north,yshift=-0.1},
  title = Example 2,
  legend cell align=left,
  legend style={nodes={scale=0.7, transform shape}},
  xlabel={Time (s.)},
  grid = both,
  ]
\addplot[mark=*, dashed, mark options={fill=white, solid}, color = black, line width = \mylinewidth,  mark size=\mymarkersize] table [y=h1b, x=h1t]{Plot_Example2.txt};
\addplot[mark=triangle*, dashed, mark options={fill=white, solid}, color = black, line width = \mylinewidth,  mark size=\mymarkersize] table [y=h2b, x=h2t]{Plot_Example2.txt};
\addplot[mark=*, mark options={fill=black, solid}, color = black, line width = \mylinewidth,  mark size=\mymarkersize] table [y=h3b, x=h3t]{Plot_Example2.txt};
\addplot[mark=triangle*, mark options={fill=black, solid}, color = black, line width = \mylinewidth,  mark size=\mymarkersize] table [y=h4b, x=h4t]{Plot_Example2.txt};
\addplot[mark=square*, mark options={fill=black, solid}, color = black, line width = \mylinewidth,  mark size=\mymarkersize] table [y=h5b, x=h5t]{Plot_Example2.txt};
 \addplot[mark=none, loosely dashdotted, color = gray, line width = 0.3mm] coordinates {(10^(-2.5), -5.18) (10^(3.27), -5.18)};
\end{semilogxaxis}

\end{tikzpicture}
\end{adjustbox}

    \label{fig.compare:b}
  \end{subfigure}
  \caption{Legend applies to both plots. Comparison of bound ($y$-axis) vs. time (s.)
  obtained with different hierarchies (of increasing degree) for PO problems Example 1 and 2 in~\cite{ghaddar2016dynamic,kuang2017alternative}. Straight-line progress indicates out-of-time (1800s) or out-of-memory (16GB).}
  \label{fig:compare}
\end{figure}

\begin{table}[H]
{\scriptsize
    \centering
    \begin{tabular}{cccrrrrrrrrrr}
        \toprule
& & & \multicolumn{2}{c}{Lasserre~\cite{lasserre2001global}} &  \multicolumn{2}{c}{Ahmadi et al.~\cite{ahmadi2019dsos}} & \multicolumn{6}{c}{non-SOS \schmu-type \german{} (Prop.~\ref{prop:genHand})}\\
\cmidrule(lr){4-5} \cmidrule(lr){6-7} \cmidrule(lr){8-13}
&  & &\multicolumn{2}{c}{\eqref{eq:puti}-SOS$_r$}&\multicolumn{2}{c}{\eqref{eq:puti}-DSOS$_r$}&\multicolumn{2}{c}{\eqref{eq:genHand}-SOS$_r$}&\multicolumn{2}{c}{\eqref{eq:genHand}-DSOS$_r$}&\multicolumn{2}{c}{\eqref{eq:genHand}-${\R_+}_r$}\\
\cmidrule(lr){4-5} \cmidrule(lr){6-7} \cmidrule(lr){8-9}  \cmidrule(lr){10-11}  \cmidrule(lr){12-13}
       Case & $(n,d)$ & $r$ & \multicolumn{1}{c}{Bound} & \multicolumn{1}{c}{T} & \multicolumn{1}{c}{Bound} & \multicolumn{1}{c}{T}  & \multicolumn{1}{c}{Bound} & \multicolumn{1}{c}{T} & \multicolumn{1}{c}{Bound} & \multicolumn{1}{c}{T} & \multicolumn{1}{c}{Bound} & \multicolumn{1}{c}{T} \\\midrule
       \verb"Ex. 3" & (15,2) & 2 & -10.00	& 0.006 & -10.00	& 0.007 &  -8.07	& 0.160 & -9.40	 & 0.015 &-9.40	 & 0.067\\
       & & 4 & -8.07	& 20.569 & -9.65&	6.117 & {\bf -7.43}	& 32.327 & -8.04 &	325.805 & -8.63	& 25.148\\
       &. & 6 & \noVal & \outM & \noVal & \outM & & & \noVal & \outM& \noVal & \outM\\
       \midrule
        \verb"ex2_1_1" & (5,2)  &  2 & \infble & 0.002 &  \infble &0.002 & -18.16	&0.009 &  -18.16&	0.007 &-18.16	&0.006\\
        & & 4 & -17.92	& 0.019 & -18.90	& 0.012 & {\bf -17.00}&	0.505 & -17.13	&0.172 & -17.13&	0.071\\
        & & 6 & {\bf -17.00}	& 0.197 & -18.66&	1.753 & & & {\bf-17.00}	& 36.237 & {\bf-17.00}&	2.660\\
        \midrule
        \verb"ex2_1_2" & (6,2)  &  2 &\infble &	0.002 & \infble &	0.002 & {\bf -213.00}&	0.004 & {\bf-213.00}	&0.002 & {\bf-213.00}	& 0.005\\
               & & 4 &  {\bf-213.00}	& 0.027 &  {\bf-213.00}	& 0.025\\
               \midrule
        \verb"ex2_1_3" & (13,2)  &  2 & \infble &	0.004 & \infble &	0.002 & {\bf -15.00} & 	0.032 & {\bf -15.00} & 	0.005 & {\bf -15.00} &	0.005\\
             & & 4 &  {\bf -15.00}	& 3.156 & {\bf -15.00} 	& 5.199 \\ \midrule
        \verb"ex2_1_4" & (6,2)  &  2 &  \infble &	0.002 &  \infble &	0.002 & {\bf-11.00} &	0.006 & {\bf-11.00}	& 0.003 & {\bf-11.00}	& 0.003\\
               & & 4 &  {\bf-11.00} &	0.040 &  {\bf-11.00} &	0.053 & \\ \midrule
        \verb"ex2_1_5" & (10,2)  &  2 & \infble	& 0.004 & \infble	& 0.002 & {\bf-268.01}&	0.033 & {\bf-268.01}	&0.019 & {\bf-268.01}	& 0.016\\
        & & 4 & {\bf -268.01}	& 0.550 & -268.06	& 2.572 &  \\
        & & 6 & & & \noVal & \outT \\
        \midrule
        \verb"ex2_1_6" & (10,2)  &  2 &  \infble &	0.004 & \infble &	0.002 &   -39.83 &	0.025 & -39.83 &	0.008 & -39.83 &	0.008\\
        & & 4 & {\bf-39.00}	 &0.495 & -40.88&	3.046 &{\bf-39.00}&	8.384&{\bf -39.00}	&28.806&{\bf -39.00}	&8.447  \\
        & & 6 & 	 & & \noVal&	\outT &&	&	&&	&  \\
        \midrule
        \verb"ex2_1_7" & (20,2)  &  2 &  \infble &	0.004 &  \infble &	0.002 & -4334.16	& 0.282 & -4335.53	& 0.114 & -4335.53	& 0.058 \\
         & &4 & \noVal & \outM & \noVal & \outM & \noVal & \outM & \noVal & \outM & \noVal & \outM  \\ \midrule
        \verb"ex2_1_10" & (20,2)  &  2 & \infble &	0.009& \infble & 	0.003& {\bf 49318.02} &	0.211& {\bf  49318.02}&	0.073 & {\bf 49318.02}	& 0.601\\
        & & 4 &  \noVal & \outM & \noVal & \outM  \\\midrule
        \verb"ex3_1_3" & (6,2) &  2 &\infble &	0.003 & \infble &	0.006 & {\bf -310.00} &	0.005 & {\bf -310.00} & 	0.003& {\bf -310.00} &	0.002\\
        & & 4 & {\bf -310.00} &	0.045 & \infble &	0.124 & \\
        & & 6 & & &\infble &	57.851 \\
        & & 8 & & & \noVal & \outT\\ \midrule
        \verb"ex3_1_4" & (3,2)  &  2 & -6.00	&0.001 & -6.00	& 0.002 & -5.69	 & 0.007& -5.69	 & 0.005 & -5.69	&0.006\\
                 & & 4 & -5.69&	0.011 & -5.84	&0.004 & -4.06	&0.147 & -4.55	&0.025 & -4.88	& 0.020 \\
        & & 6 & -4.07	& 0.028 & -5.47	 & 0.032 & {\bf -4.00}	&0.533 & -4.07	& 5.479 & -4.26	&0.171\\
        & & 8 &  {\bf -4.00}&	0.058 & -5.11&	0.320 & & & -4.01	&81.939 & -4.03&	3.318\\ \midrule
        \verb"ex4_1_9" & (2,4) &  4 & -7.00&	0.002 & -7.00&	0.004 & -6.67	&0.015 & -6.67	&0.006 & -6.75	& 0.002\\
        & & 6 & -6.67	& 0.018 & -7.00&	0.005 & {\bf -5.51}	&0.198 & -5.63	& 0.023 & -5.63 & 	0.006\\
        & & 8 & {\bf -5.51}&	0.015 & -7.00	&0.011 & & & -5.57	&0.142 & -5.57& 	0.023 \\ \midrule
        \verb"mathopt2" & (2,4)  &  4 & {\bf 0.00}&	0.003 & {\bf 0.00}	& 0.002 & {\bf 0.00}	& 0.003 & {\bf 0.00}	& 0.003 & {\bf 0.00}&	0.002\\
        \bottomrule
    \end{tabular}
    }
\caption{Comparison of bound vs.\ computation time in sec.\ (T) obtained with different hierarchies for PO problems in~\cite{ghaddar2016dynamic,kuang2017alternative, kleniati2010partitioning, waki2006sums, waki2008algorithm} with $n$ variables and maximum degree (objective and constraints) $d$. Problem's {\bf optimal value} in bold face, ``infeas.'': LMI is infeasible, ``\toutT:'' out-of-time (1800s), ``\toutM'': out-of-memory (16GB). In that last two cases ``$\relbar$'' indicates no bound computed.
}
 \label{tab:compare}
\end{table}

It is worth noting that one could also consider using SDSOS and SONC polynomials as the base class. However, a straightforward implementation of the problems needed to test membership in these classes of polynomials leads to LMIs that are more difficult to solve in comparison to the LMIs needed to test membership in the class of SOS polynomials, defeating the purpose of using SDSOS and/or SONC polynomials. On the other hand, as shown in~\citep[see, e.g.,][]{ahmadi2019dsos, kuang2017alternative, magron2023sonc, LPandMore}, this issue can be overcome by appropriate implementations, but this is out of the scope of this article.

\subsection{A semi-sparse \german{} over non-structured sets} \label{subsec:sparse}

Another avenue that has been explored to make \germans{} more amenable to optimization is to derive structured variations of classical \germans{}.
In particular, sparsity~\citep[see, e.g.,][]{waki2006sums,waki2008algorithm,lasserre2006convergent,wang2019tssos, wang2020chordal,sparse20,Weisser2018,magron2022sparse} and symmetry~\citep[see, e.g.,][]{riener2013exploiting, gatermann2004symmetry} can help to reduce the size of optimization problems obtained with degree-restricted versions of NCCs.

Recall that Putinar's \german{}~\citep{Puti93} states that if, beyond $S_g$ being compact, the quadratic module generated by the polynomials~$g \in \R^m[x]$ is Archimedean, then the NNC~\eqref{eq:schmu} with $\|\alpha\|_1 \le 1$ exists. That is, there exist SOS polynomials $\sigma_{j}$, $j=0,1,\dots,m$, such that
\begin{equation}
\label{eq:puti}
p(x) = \sigma_0(x) + \sum_{j=1}^m \sigma_{j}(x)g_j(x).
\end{equation}
\looseness -1
To further evidence the value of our lifting approach in Section~\ref{sec:approach}, we  set the generalized simplex as a simple  set in Theorem~\ref{thm:mainCompGen}. As a result, we can use Corollary~\ref{cor:mainBasicCompact}\ref{item:simplexcert} to obtain a \german{} (Proposition~\ref{prop:sparse}) that has a {\em semi-sparse} structure, even if the underlying semialgebraic set does not posses any sparsity structure beneficial for the existing results exploiting sparsity~\citep{waki2006sums,waki2008algorithm,lasserre2006convergent,wang2019tssos, wang2020chordal,sparse20,Weisser2018,magron2022sparse}. Our \german{} resembles Putinar's \german{}, but it does not require the Archimedean property of the quadratic module generated by the~$g$'s.

As an intermediate step, we first derive a new sparse SOS \german{} for positive polynomials over the standard simplex that is of independent interest.

\begin{lemma}[Semi-sparse SOS \german{} on the standard simplex]\label{lem:sparseDelta}
Let $F\in \P^+_{d}(\smash{\Delta^n_{\mathbf{0},M}})$. Then there exist $n$-variate  SOS polynomials $\sigma_{0}$ and $\sigma_1$, and  univariate SOS polynomials   $\rho_1,\dots,\rho_{n}, \rho_{n+1}$ such that
\begin{equation}
\label{eq:spacertDelta}
F(x) = \sigma_0(x)+\sigma_1(x)\left ( M^2-\|x\|^2 - (M-e\tr x)^2 \right)  
+ \rho_{n+1}(M-e\tr x)(M-e\tr x) +  \sum_{i=1}^{n} \rho_i(x_i)x_i.
\end{equation}
\end{lemma}

For ease of presentation, we delay the proof of Proposition~\ref{lem:sparseDelta} to the end of this section. The sparsity in~\eqref{eq:spacertDelta} stems from the fact that most of the SOS  required to construct the NNC are univariate polynomials.  However, since the expression in~\eqref{eq:spacertDelta} uses two full-dimensional SOS polynomials (i.e., $\sigma_0, \sigma_1$), we refer to this result as a {\em semi-sparse} SOS \german{}.

Now, by combining Lemma~\ref{lem:sparseDelta} and Corollary~\ref{cor:mainBasicCompact} we obtain a {\em semi-sparse} SOS \german{} on compact sets.

\begin{proposition}[Semi-sparse SOS \german{} on compact sets]
\label{prop:sparse}
Let $g_1, \allowbreak \dots, \allowbreak g_m \in \R[x]$ be such that $S=\{x\in \R^n: g_1(x)\ge 0,\dots,g_m(x) \ge 0\}$ is a non-empty compact set.
Let $L \in \R^n$ and $M\in \R$ be such that  $S \subseteq \{x \in \R^n: L \le x,\, e\tr x \le M\}$. Let $\hat M \ge M+\sum_{j=1}^m \|g_j\| (1+M + e\tr(|L|-L))^{\deg g_j}$ and define
\[
\hat{g}(x) := \left (x-L,g(x), \hat M- e\tr x - e\tr g(x) \right ) \in \R[x]^{n+m+1}.
\]
If $p\in \P^+(S)$, then there exist $n$-variate  SOS polynomials $\sigma_{0}$ and $\sigma_1$, and  univariate SOS polynomials $\rho_1,\dots,\rho_{n+m+1}$ such that
\begin{align}
p(x) & = \sigma_0(x)+\sigma_1(x)\left ( \hat M^2-\sum_{i=1}^{n+m+1} \hat{g}_i(x)^2\right)  +\sum_{i=1}^{n+m+1} \rho_i(\hat{g}_i(x))\hat{g}_i(x).\label{eq:spacert}
\end{align}
\end{proposition}

\begin{proof} Let $ \hat L=[L\tr, \mathbf{0}\tr ] \tr$.
From Corollary~\ref{cor:mainBasicCompact}\ref{cor:mainBasicCompactSimplex}, there exists $F \in \P^+(\smash{\Delta^{n+m}_{\hat L,\hat M}})$ such that $p(x) = F(x,g(x))$.
From Lemma~\ref{lem:sparseDelta}, there exist $n$-variate  SOS polynomials~$\sigma_{0}$ and~$\sigma_1$, and  univariate SOS polynomials $\rho_1,\dots,\rho_{n+m}, \rho_{n+m+1}$ such that
\begin{equation*}
F(y)  = \sigma_0(y)+\sigma_1(y)\left (\hat M^2-\|y\|^2 - (\hat M-e\tr y)^2 \right) 
+ \rho_{n+m+1}(\hat M-e\tr y)(\hat M-e\tr y) +  \sum_{i=1}^{n+m} \rho_i(y_i)y_i.
\end{equation*}
After substituting $y$ with $(x-L,g(x))$ above, the statement of the proposition follows.
 \end{proof}

As in Lemma~\ref{lem:sparseDelta}, the \german{} in  Proposition~\ref{prop:sparse} is semi-sparse, as the SOS polynomial multipliers $\rho_i$, $i=1,\dots,n+m+1$ are all univariate.  A univariate SOS of degree $2d$ can be represented using a $(d+1)\times (d+1)$ positive semidefinite (PSD) matrix~\citep{Nest97}, which is much smaller than the one needed to represent a multivariate SOS of the same degree. Still, the sparse structure of the certificate might come at the cost of large degrees. A compelling line for future work is to compute degree bounds for this type of certificates.
 Also,~the right-hand side of~\eqref{eq:spacert} is linear in (the coefficients of) $\sigma_0$, $\sigma_1$ and $\rho_i$, $i=1,\dots,n+m+1$. Thus, for each fixed degree, the truncated version of NNC \eqref{eq:spacert} is LMI-representable.
When some of the polynomials $g_i$, $i =1,\dots,m$, have few variables, we can make the  certificate~\eqref{eq:spacert} stronger without sacrificing much sparsity by using the following remark.

\begin{remark} \label{rem:verysparse}
In expression~\eqref{eq:spacert} (Proposition~\ref{prop:sparse}), for each $i= 1,\dots,n+m+1$ we have that $\rho_i(\hat g_i(x))$  is an SOS polynomial in $x_{\hat{g}_i}$; that is, in the variables of $\hat{g}_i$. If the cardinality of $x_{\hat{g}_i}$ is small, we could use $\tau_i(x_{\hat{g}_i})$ instead of  $\rho_i(\hat{g}_i(x))$,  where $\tau_i(x_{\hat{g}_i}) \in \R[x_{\hat{g}_i}]$ is an SOS,  to obtain an NNC potentially stronger than~\eqref{eq:spacert}. Notice that this property depends only on the number of variables in each $\hat{g}_i$, $i=1,\dots,m$ and does not require any additional assumptions.
\end{remark}

In the next example, we illustrate the application of Proposition~\ref{prop:sparse} to a compact set for which Putinar’s certificate does not exist for some polynomials positive over the set. Notice that as the set is compact, our NNC~\eqref{eq:spacert} exists for every positive polynomial.

\begin{example}[Certifying nonnegativity over a non-Archimedean set] \label{ex:sparse}
Let $c > \tfrac {17}4$. Consider the polynomials $p(x)=c-x_1^2- x_2^2$, $g_1(x) = x_1- \tfrac{1}{2}$, $g_2(x) = x_2 -\tfrac{1}{2}$ and $g_3(x) = 1-x_1x_2$. Let $S = \{x \in \R^2: g_i(x)\ge 0, i =1,2,3\}$. The set~$S$ has been studied in~\citep[][Ex.~3.1.7]{Fan06} and~\citep[][Ex.~4.6]{jacobi2001distinguished} as an example of a compact semialgebraic set whose quadratic module is not Archimedean. Note that $p \in \P^+(S)$ (as $\|x\| \le \tfrac {\sqrt{17}}2$ for any $x \in S$). But, as the quadratic module associated with the set~$S$ is not Archimedean, Putinar's NNC~\eqref{eq:puti} does not exist for~$p$. In this example,
using the semi-sparse SOS \german{} in Proposition~\ref{prop:sparse}, we construct a simple sparse certificate for~$p$ on~$S$.

Let $L = [1/2,1/2]\tr$ and $M = 5/2$, then $S \subseteq \Delta^2_{L,M}$ and $p\in \P^+(S)$. Let
$\hat M = \tfrac {87}4$ and $\hat g = (x_1-\tfrac 12,x_2-\tfrac 12,g_1,\dots,g_5)$, where $g_4(x) = \tfrac {87}4 - 2 x - 2 y + x y$ and  $g_5(x) = -2 + 89 x - 6 x^2 + 89 y - \tfrac {99}2 xy + 4 x^2 y - 6 y^2 + 4 x y^2  -  2 x^2 y^2$.
It follows from  Proposition~\ref{prop:sparse} that an NNC of the form~\eqref{eq:spacert} can be constructed for $p$ over $S$. In fact, notice that
\[
c -x_1^2- x_2^2 =(c - \tfrac {597}{32}) +  \tfrac 12 (x + y - x y - \tfrac {21}2)^2   + 5 g_3(x) +  \tfrac {47}8 g_4(x)   + \tfrac 14 g_5(x)
\]
That is, for every $c \ge 597/32$ a simple, low-degree NNC of the form~\eqref{eq:spacert} exists.
\end{example}

It is important to mention that in order to derive sparse \germans{} not using full-dimensional SOS polynomials at all, one could use an approach similar to the one  in~\citep[][Sec.~4.2]{roebers2021sparse} to obtain {\em fully-sparse} \germansF{} from semi-sparse ones. This is a topic of ongoing research work.

As mentioned earlier, we finish this section with a proof of Lemma~\ref{lem:sparseDelta}. For that purpose, we first need the lemma below.

\begin{lemma} \label{lem:univ} Let $S\subset \R^n$ be non-empty and compact, and  $p\in \R[x]$. Then $p \in \P^+(S\cap \R_+^n)$ if~and~only~if
\begin{equation}
\label{eq:univ}
p(x)=q(x)+\sum_{i=1}^n x_i \rho_i(x_i),
\end{equation}
where $\rho_1,...,\rho_n$ are  univariate SOS polynomials and $q\in \P^+(S)$.
\end{lemma}
\begin{proof} If $S\subset \R^n_+$, then the result is straightforward. Thus, in what follows, we assume that $S\nsubseteq \R^n_+$.
Without loss of generality, we assume there exists $k\le n$ such that   $\{x \in S,\,x_i <0 \}\neq \emptyset$ for all $i\in \{1,\dots,k\}$, and $\{x \in S,\,x_i <0 \}= \emptyset$ for all $i\in \{k+1,\dots,n\}$.
Since $p \in \P^+(S\cap \R_+^n)$,  there exists $\eps > 0$ such that $x\in S$ and $x>-\eps $ implies $p(x) \ge 0$. Also,  let $M > 0$ be such that $x \in S$ implies $x < M$.
Let $p^0_{\min} = \min\{p(x): x \in S\cap \R_+^n\}$, and let $p^i_{\min}=\min\{p(x) : x \in S,\,x_i \le -\eps \}$ for $i\in \{1,\dots,k\}$.
Consider a function $f_i(t) = a_ite^{-b_it}$  for some $a_i>0$, $b_i>0$, and $i \in \N$.
For any $i\in \{1,\dots,n\}$, we have that $f_i(t)$ is positive  for $t>0$, as well as negative and increasing  for $t<0$.
Therefore, for $i\in \{1,\dots,k\}$, we can tailor $a_i$ and $b_i$ so that $\max \{f_i(x_i):x \in S:x_i \le -\eps\} \le -\eps a_i e^{b_i\eps }  < \smash{\frac {p^i_{\min}}{n}}$ and $\max\{f_i(x_i):x\in S\cap \R_+^n\}\le M a_i <\smash{\frac {p^0_{\min}}{n}}$.
  Defining $f(x) = \sum_{i=1}^k f_i(x_i)$ we obtain $p(x)>f(x)$ for all $x\in S$.

Let $i \in \{1,\dots,k\}$. We show that $f_i(t)$ can be approximated as closely as desired by $t \rho_i(t)$, where $\rho_i$ is a  univariate SOS, which implies $p(x) = q(x) + \sum_{i=1}^n x_i \rho_i(x_i)$ where $q(x) > 0$ for all $x \in S$.

For any $l \ge 0$ consider the Taylor approximation of $e^{t}$ with $2l$ terms:
\[T_{l}(t)=\sum_{j=0}^{2l} \tfrac{t^j}{j!}=1+t+\tfrac{t^2}{2!}+\tfrac{t^3}{3!}+...+\tfrac{t^{2l}}{(2l)!}.\]
Since the Taylor series converges uniformly on bounded intervals, by growing $l$, one can approximate $f_i(t)$ to any desired accuracy by $a_itT_{l}(-b_it)$.
Hence it is enough to show that $T_{l}(t)$ is an SOS, or equivalently, given that $T_{l}$ is a univariate polynomial, that $T_{l} (t)\ge 0$ for all $t$~\citep[see, e.g.,][]{Rezn00}.
We prove the nonnegativity of $T_{l}$ by contradiction.
Assume $T_{l}$ is not nonnegative.
Then $T_l$ must have a zero as $T_{l}(0)=1$.
Let $t^*$ be the largest zero of $T_l$.
Then $t^* < 0$ and $T'_l(t^*) > 0$.
But for any $t$,
$T_{l}'(t)= \smash{\sum_{j=0}^{2l-1} \tfrac{t^j}{j!}} = T_{l}(t) - \smash{\tfrac{t^{2l}}{(2l)!}}$. Thus, $0 < T'_l(t^*) = - \smash{\tfrac{(t^*)^{2l}}{(2l)!}} <0$, which is a contradiction.
\end{proof}

\begin{remark} \label{rem:2}
From the proof of Lemma~\ref{lem:univ} it follows that one could use the same SOS polynomial $\rho_i=\rho$, for $i=1,\dots,n$ in~\eqref{eq:univ}.
\end{remark}

Now, we use the representation from Lemma~\ref{lem:univ} and some of the known NNCs on compact sets from~\cite{PenaVZ08,schmudgen1991k} to prove Lemma~\ref{lem:sparseDelta}.

\begin{proof}[Proof of Lemma~\ref{lem:sparseDelta}] Given $F \in \P_d^+(\Delta^n_{\mathbf{0},M})$ define
$
G(x_0,x) = F\left( \frac {Mx_1}{x_0+e\tr x},\dots,  \frac {Mx_n}{x_0+e\tr x} \right)  (x_0+e\tr x)^{\deg(F)}.
$
 Then $G \in \P_d^+(S \cap V)$, where   $V = \{(x_0,x) \in \R^{n+1}_+: x_0 + e\tr x=M\} = \{(x_0,x)\in \R^{n+1}: (x_0+e\tr x-M)^2=0 \}$, $S = \R_+^{n+1} \cap \mathcal{B}_M$,  and
$\mathcal{B}_M=\{(x_0,x)\in \R^{n+1}:  x_0^2+ \|x\|^2 \le M^2\}$ is the ball of radius $M$ in $\R^{n+1}$.

 It follows from Proposition~\ref{prop:CompEqu} that
\begin{equation}\label{eqloc:sp1}
G(x_0,x)=q(x_0,x)+h(x_0,x)(x_0+e\tr x-M)^2,
\end{equation}
where $h\in \R_{d}[x_0,x]$ and $q \in \P^+_{d+ 2}(\R^{n+1}_+ \cap \mathcal{B}_M)$. Now, by Lemma~\ref{lem:univ},
\begin{equation}\label{eqloc:sp2}
q(x_0,x)=g(x_0,x)+\sum_{i=0}^{n} x_i \rho_i(x_i),
\end{equation}
where $g(x_0,x)\in \P^+(\mathcal{B}_M)$ and $\rho_0,...,\rho_{n}$ are univariate SOS polynomials. Finally, by Schm{\"u}dgen's \german{}~\cite{schmudgen1991k}, we  use NNC~\eqref{eq:schmu} to obtain
\begin{equation}\label{eqloc:sp3}
g(x_0,x)=\sigma_0(x_0,x) + \sigma_1(x_0,x)( M^2 - x_0^2 -\|x\|^2 ),
\end{equation}
where $\sigma_0,\sigma_1$ are SOS polynomials. Replacing $x_0$ by $M - e\tr x$, we have that
\begin{align*}
M^{\deg F}F(x) & = G(M-e\tr x,x)=  q(M-e\tr x,x) & \text{(from~\eqref{eqloc:sp1})} \\
& =  g(M-e\tr x,x) + (M-e\tr x)\rho_0(M-e\tr x) +  \sum_{i=1}^{n} x_i \rho_i(x_i) & \text{(from~\eqref{eqloc:sp2})}\\
& =  \sigma_0(M-e\tr x,x)  + \sigma_1(M-e\tr x,x)( M^2 - (M-e\tr x)^2 -\|x\|^2 ) \\
& \qquad +  (M-e\tr x)\rho_0(M-e\tr x) +  \sum_{i=1}^{n} x_i \rho_i(x_i).   & \text{(from~\eqref{eqloc:sp3})}
\end{align*}
After letting $\rho_{n+1}(t):=\rho_0(t)$, the statement of Lemma~\ref{lem:sparseDelta} follows as  $\sigma_0(M-e\tr x,x)$ and $\sigma_1(M-e\tr x,x)$ are SOS polynomials.
\end{proof}

\subsubsection{Computational experiments} \label{sec:sparseNumerics}
In this section, we compute lower bounds for PO problems using the hierarchy obtained from the semi-sparse SOS NNC~\eqref{eq:spacert}, and the Lasserre hierarchy (i.e., the one obtained~\citep[see, e.g.,][]{lasserre2001global} from Putinar's NNC~\eqref{eq:puti}). We also consider the semi-sparse hierarchy obtained by  using the box $\Box^n_{L,U}$ as the simple set in Proposition~\ref{prop:sparse}. In this case, the resulting NNC will have  the form of~\eqref{eq:spacert} where $ \hat{g}(x) := \left (x-L,U - x,g(x), \hat{U} - g(x) \right ) \in \R[x]^{2n+2m},$ with $\hat U$ as defined in Lemma~\ref{lem:hatUR}\ref{lem:hatUR_b}.
We refer to the two cases of the NNC~\eqref{eq:spacert} as \eqref{eq:spacert}-Simplex and \eqref{eq:spacert}-Box. We call the corresponding hierarchies as \emph{Semi-sparse Simplex} and \emph{Semi-sparse Box}, respectively.

In the semi-sparse hierarchies (resp. Lasserre hierarchy), to obtain the $r$-level of the hierarchy, we choose the degrees of the SOS coefficients in~\eqref{eq:spacert} (resp.~\eqref{eq:puti}) such that the degree of each term in the expression is at most $r$. To test these three approaches, we consider PO problems of the form:
\begin{equation}
\label{def:Sexp}
\min_{x\in \R^n_+} \left \{f(x): a_i x_{2i-1}+b_ix_{2i} +c_i \ge 0, i =1,\dots,\frac{n}{2}, 1-\sum_{i=1}^{\frac{n}{2}}x_{2i}^4 \ge 0, 1-\sum_{i=1}^{\frac{n}{2}}x_{2i-1} \ge 0 \right \},
\end{equation}
where $n \in \{12,16\}$ and $a_i$, $b_i$ and $c_i$, $i=1,\dots, \frac{n}{2}$,  are randomly generated scalars. For each $n\in \{12,16\}$, we generate five different instances of~\eqref{def:Sexp}. For each instance, we minimize four randomly generated objective functions $f(x)$ of degrees one through four. Each objective depends on all variables in the problem, and the coefficients of all random polynomials are generated using independent standard univariate normal distributions.

Instances \eqref{def:Sexp} exhibit some sparsity, but  the structure of this sparsity is not obvious to exploit. Moreover, the objective functions include all possible monomials for the given degree and number of variables.
Thus one cannot efficiently use the existing sparse certificates
from~\cite{waki2006sums,lasserre2006convergent,wang2019tssos,wang2020chordal,sparse20}.
On the other hand, setting $L = 0, U=e$ and $M = n^{3/4}$, the NNCs \eqref{eq:spacert}-Simplex/Box are suitable for such instances.
Notice that for any $g$ and univariate SOS $\rho$,  the polynomial $\gamma(x) = \rho(g(x))$ is an SOS in the same variables as $g$. Thus, in our numerical experiments, when $\hat g_j$ depends on at most two variables, we replace the coefficient $\rho_j(\hat g_j(x))$ in~\eqref{eq:spacert} by an SOS coefficient $\gamma_j(x)$ which depends only on the variables appearing in $\hat g_j$.

In Table~\ref{tabsp4:value}, we compare the lower bounds and computing times obtained using the $r=6$ level of all hierarchies for instances of~\eqref{def:Sexp}. The instances are labeled by {\tt $n\_m$}; that is, by their number of variables $n \in \{14, 15\}$ and the seed $m \in \{4,\dots,8\}$ used to generate their random coefficients (column ``Case'' in Table~\ref{tabsp4:value}). Each instance is set with randomly generated objectives of degree one up to four (columns ``Linear objective'', ``Quadratic objective'', ``Cubic objective'', and ``Quartic objective'' in Table~~\ref{tabsp4:value}).
In all these  hierarchies we reduce the degree of the SOS polynomial $\sigma_0(x)$ to be equal to four, instead of six. Otherwise the Lasserre hierarchy runs out of memory for the largest instances. The degree reduction follows the successful approach used in~\cite{Lass2017} to reduce the size of PO approximations hierarchies.

The effect of using the semi-sparse SOS NNCs~\eqref{eq:spacert}-Simplex/Box in the size of the LMIs required to construct the desired hierarchy versus using the Lasserre hierarchy can be observed in Table~\ref{tabsp4:size}, which lists the number and type of constraints needed to formulate the corresponding LMIs. In particular, the substantial reduction in the size of the LMIs is evident when looking at the number of the largest SDP matrices needed to construct them (column ``Largest SDPs'' in Table~\ref{tabsp4:size}).

As a result of the size reductions reported in Table~\ref{tabsp4:size}, we can observe in Table~\ref{tabsp4:value} that the semi-sparse hierarchies can always be solved in less time than the Lasserre hierarchy (within a 1500s time limit and a 16G memory limit).  The solution time speed-up provided by the semi-sparse hierarchies is obtained with almost no effect on the quality of the lower bound (in comparison with the Lasserre hierarchy). This is highlighted in Table~\ref{tabsp4:value} by placing an $^*$ in the lower bounds provided by the Lasserre hierarchy in the few cases in which the improvement on the bound is more than 1\% relative to the one provided by the strongest semi-sparse hierarchy.

\begin{table}[H]
\centering
{\scriptsize
\begin{tabular}{ccccccccccc}
\toprule
           &              &             \multicolumn{2}{c}{Largest}        & \multicolumn{2}{c}{2nd Largest}                              &   \multicolumn{2}{c}{Other}    &  No. of & No. of\\
&   &\multicolumn{2}{c}{SDPs} &\multicolumn{2}{c}{SDPs}&\multicolumn{2}{c}{SDPs}& non-negativity & equality \\
\cmidrule(lr){3-4} \cmidrule(lr){5-6} \cmidrule(lr){7-8}
Case  & Certificate    & (No.) & Size & (No.) & Size & (No.) & Size & constraints & constraints \\
\midrule
\multirow{3}{*}{\tt 12\_}   & Semi-sparse Simplex      &(1) &  91x91    & (6) &  6x6  &(13) & 3x3 & 2                                                                             & \multirow{3}{*}{18564}                                                               \\
& Semi-sparse Box     &(1) &  91x91    & (12) &  6x6  &(26) & 3x3 & 2                                                                             &                                                             \\ &  Lasserre & (20) &  91x91 & (1) & 13x13   & $(0)$                        & $-$                                                                             & 0                                                            \\ \midrule
\multirow{3}{*}{\tt 16\_}  & Semi-sparse Simplex   & (1) & 153x153  & (8) & 6x6     & (17) & 3x3  & 2 & \multirow{3}{*}{74613}                        \\
& Semi-sparse Box   & (1) & 153x153  & (16) & 6x6     & (34) & 3x3  & 2 &                        \\
  &  Lasserre & (27) & 153x153 & (1) & 17x17    & (0)   & $-$      & 0                       \\ \bottomrule
\end{tabular}
}
\caption{Number and type of constraints needed  to compute lower bounds on instances of~\eqref{def:Sexp} with $n \in \{12,16\}$, using the 6th level semi-sparse hierarchies (i.e., based on NNC~\eqref{eq:spacert}-Simplex/Box) and the Lasserre hierarchy (i.e., based on NNC~\eqref{eq:puti}), with $\deg \sigma_0 = 4$ in the corresponding NNCs.} \label{tabsp4:size}
\end{table}

\begin{table}[H]
\centering
{\scriptsize
\setlength\tabcolsep{2.75pt}
\begin{tabular}{ccccccccccccccccccc}
\toprule
  Case                                                                                                          & \multicolumn{3}{c}{Linear objective} & \multicolumn{3}{c}{Quadratic objective} & \multicolumn{3}{c}{Cubic objective} & \multicolumn{3}{c}{Quartic objective} \\
                       \cmidrule(lr){2-4} \cmidrule(lr){5-7} \cmidrule(lr){8-10} \cmidrule(lr){11-13}
                                           &  \multicolumn{2}{c}{Semi-sparse}          &   {Lasserre}    &  \multicolumn{2}{c}{Semi-sparse}          &   {Lasserre}   &  \multicolumn{2}{c}{Semi-sparse}          &   {Lasserre}   &  \multicolumn{2}{c}{Semi-sparse}          &   {Lasserre}          \\                                                                   \cmidrule(lr){2-3} \cmidrule(lr){5-6} \cmidrule(lr){8-9} \cmidrule(lr){11-12}
                    &  Simplex       &  Box         &    &  Simplex       &  Box         &   &  Simplex       &  Box         &  &  Simplex       &  Box         &          \\ \midrule
{\tt 12\_4}
	&	\begin{tabular}[c]{@{}c@{}} -3.23  \\ (2.9s) \end{tabular}	&	\begin{tabular}[c]{@{}c@{}} -3.23  \\ (2.8s) \end{tabular}	&  \begin{tabular}[c]{@{}c@{}} -3.23  \\ (125.3s) \end{tabular}	&	\begin{tabular}[c]{@{}c@{}} -6.17  \\ (3.0s) \end{tabular}		&	 \begin{tabular}[c]{@{}c@{}} -6.17  \\ (2.8s) \end{tabular}	&\begin{tabular}[c]{@{}c@{}} -6.17  \\ (94.9s) \end{tabular}		&		\begin{tabular}[c]{@{}c@{}}-10.05
	\\ (3.1s) \end{tabular} & \begin{tabular}[c]{@{}c@{}} -8.82	\\ (2.7s)\end{tabular}	& \begin{tabular}[c]{@{}c@{}} -8.82	\\ (94.5s)\end{tabular}		&	\begin{tabular}[c]{@{}c@{}} -16.60	 \\ (2.8s) \end{tabular} &\begin{tabular}[c]{@{}c@{}} -15.32	 \\ (2.8s) \end{tabular}	&		\begin{tabular}[c]{@{}c@{}} -15.28		 \\ (86.6s) \end{tabular}\\ \midrule
{\tt 12\_5} 	
	&	\begin{tabular}[c]{@{}c@{}} -7.78 \\ (2.8s) \end{tabular}	&	\begin{tabular}[c]{@{}c@{}} -7.78 \\ (3.1s) \end{tabular}	&  \begin{tabular}[c]{@{}c@{}} -7.78  \\ (86.6s) \end{tabular}	&	\begin{tabular}[c]{@{}c@{}} -5.27  \\ (2.7s) \end{tabular}		&	 \begin{tabular}[c]{@{}c@{}} -5.23  \\ (3.2s) \end{tabular}	&\begin{tabular}[c]{@{}c@{}} -5.23 \\ (153.7s) \end{tabular}		&		\begin{tabular}[c]{@{}c@{}} -13.08
	\\ (3.9s) \end{tabular} & \begin{tabular}[c]{@{}c@{}}-9.92	\\ (2.8s)\end{tabular}	& \begin{tabular}[c]{@{}c@{}}-9.92	\\ (72.4s)\end{tabular}		&	\begin{tabular}[c]{@{}c@{}} -28.37	 \\ (3.4s) \end{tabular} &\begin{tabular}[c]{@{}c@{}} -12.55		 \\ (3.1s) \end{tabular}	&		\begin{tabular}[c]{@{}c@{}} -12.26$^*$	 \\ (101.5s) \end{tabular}\\ \midrule
{\tt 12\_6}
	&	\begin{tabular}[c]{@{}c@{}} -2.09 \\ (3.0s) \end{tabular}	&	\begin{tabular}[c]{@{}c@{}} -2.09 \\ (2.8s) \end{tabular}	&  \begin{tabular}[c]{@{}c@{}} -2.09 \\ (100.9s) \end{tabular}	&	\begin{tabular}[c]{@{}c@{}} -4.09 \\ (2.8s) \end{tabular}		&	 \begin{tabular}[c]{@{}c@{}}-3.97 \\ (2.9s) \end{tabular}	&\begin{tabular}[c]{@{}c@{}} -3.92$^*$ \\ (79.3s) \end{tabular}		&		\begin{tabular}[c]{@{}c@{}} -8.53
	\\ (3.6s) \end{tabular} & \begin{tabular}[c]{@{}c@{}} -6.14	\\ (3.0s)\end{tabular}	& \begin{tabular}[c]{@{}c@{}} -5.91$^*$	\\ (185.0s)\end{tabular}		
	&	\begin{tabular}[c]{@{}c@{}} -55.64	 \\ (4.1s) \end{tabular} &\begin{tabular}[c]{@{}c@{}} -16.50		 \\ (2.8s) \end{tabular}	&		\begin{tabular}[c]{@{}c@{}} -16.50	 \\ (87.0s) \end{tabular}\\ \midrule
{\tt 12\_7}	
	&	\begin{tabular}[c]{@{}c@{}} -0.32 \\ (2.6s) \end{tabular}	&	\begin{tabular}[c]{@{}c@{}} -0.32 \\ (3.0s) \end{tabular}	&  \begin{tabular}[c]{@{}c@{}} -0.32 \\ (117.1s) \end{tabular}	
	&	\begin{tabular}[c]{@{}c@{}} -7.14 \\ (3.7s) \end{tabular}		&	 \begin{tabular}[c]{@{}c@{}} -6.92 \\ (3.1s) \end{tabular}	&\begin{tabular}[c]{@{}c@{}} -6.87 \\ (93.7s) \end{tabular}		
	&		\begin{tabular}[c]{@{}c@{}} -14.61	\\ (3.8s) \end{tabular} &\begin{tabular}[c]{@{}c@{}} -12.59	 \\ (2.7s) \end{tabular}	&		\begin{tabular}[c]{@{}c@{}} -12.59	 \\ (79.1s) \end{tabular}	
	&	\begin{tabular}[c]{@{}c@{}} -37.48	 \\ (3.4s) \end{tabular} &\begin{tabular}[c]{@{}c@{}} -11.67	 \\ (3.2s) \end{tabular}	&		\begin{tabular}[c]{@{}c@{}} -11.67	 \\ (87.0s) \end{tabular}\\ \midrule
{\tt 12\_8} 	
	&	\begin{tabular}[c]{@{}c@{}} -3.92 \\ (2.5s) \end{tabular}	&	\begin{tabular}[c]{@{}c@{}} -3.92\\ (2.9s) \end{tabular}	&  \begin{tabular}[c]{@{}c@{}}-3.92 \\ (94.6s) \end{tabular}	
	&	\begin{tabular}[c]{@{}c@{}} -10.28 \\ (3.4s) \end{tabular}		&	 \begin{tabular}[c]{@{}c@{}} -10.25 \\ (2.9s) \end{tabular}	&\begin{tabular}[c]{@{}c@{}} -10.25\\ (79.3s) \end{tabular}		
	&		\begin{tabular}[c]{@{}c@{}} -13.94	\\ (3.5s) \end{tabular} &\begin{tabular}[c]{@{}c@{}} -12.00	 \\ (3.1s) \end{tabular}	&		\begin{tabular}[c]{@{}c@{}} -11.85$^*$	 \\ (132.1s) \end{tabular}	
	&	\begin{tabular}[c]{@{}c@{}} -56.83	 \\ (3.7s) \end{tabular} &\begin{tabular}[c]{@{}c@{}}-16.26	 \\ (2.6s) \end{tabular}	&		\begin{tabular}[c]{@{}c@{}} -16.26	 \\ (79.9s) \end{tabular}\\ \midrule
{\tt 16\_4}	
	&	\begin{tabular}[c]{@{}c@{}} -2.19 \\ (26.2s) \end{tabular}	&	\begin{tabular}[c]{@{}c@{}} -2.19 \\ (25.9s) \end{tabular}	&  \begin{tabular}[c]{@{}c@{}} $\relbar$ \\ \outT  \end{tabular}	
	&	\begin{tabular}[c]{@{}c@{}} -5.30 \\ (26.2s) \end{tabular}		&	 \begin{tabular}[c]{@{}c@{}} -5.10 \\ (24.7s) \end{tabular}	&\begin{tabular}[c]{@{}c@{}} -5.10 \\ (1335.2s) \end{tabular}		
	&		\begin{tabular}[c]{@{}c@{}} -10.81	\\ (32.5s) \end{tabular} &\begin{tabular}[c]{@{}c@{}}-9.22	 \\ (24.8s) \end{tabular}	&		\begin{tabular}[c]{@{}c@{}} -9.22	 \\ (1228.6s) \end{tabular}	
	&	\begin{tabular}[c]{@{}c@{}}  -25.39	 \\ (36.1s) \end{tabular} &\begin{tabular}[c]{@{}c@{}}-15.75	 \\ (27.0s) \end{tabular}	&		\begin{tabular}[c]{@{}c@{}} $\relbar$ 	 \\   \outT  \end{tabular} \\ \midrule
{\tt 16\_5}
	&	\begin{tabular}[c]{@{}c@{}} -3.76 \\ (36.5s) \end{tabular}	&	\begin{tabular}[c]{@{}c@{}} -3.76 \\ (27.3s) \end{tabular}	&  \begin{tabular}[c]{@{}c@{}} -3.76 \\ (1245.7s) \end{tabular}	
	&	\begin{tabular}[c]{@{}c@{}}-8.22 \\ (49.6s) \end{tabular}		&	 \begin{tabular}[c]{@{}c@{}} -7.60 \\ (29.4s) \end{tabular}	&\begin{tabular}[c]{@{}c@{}} -7.60 \\ (1335.2s) \end{tabular}		
	&		\begin{tabular}[c]{@{}c@{}} -19.50 \\ (57.7s) \end{tabular} &\begin{tabular}[c]{@{}c@{}}-13.46 \\ (26.8s) \end{tabular}	&		\begin{tabular}[c]{@{}c@{}}-13.46		 \\ (1139.1s) \end{tabular}	
	&	\begin{tabular}[c]{@{}c@{}}  -41.27	 \\ (61.0s) \end{tabular} &\begin{tabular}[c]{@{}c@{}}-15.65	 \\ (28.2s) \end{tabular}	&		\begin{tabular}[c]{@{}c@{}} $\relbar$ 	 \\   \outT  \end{tabular} \\ \midrule
{\tt 16\_6}
	&	\begin{tabular}[c]{@{}c@{}} -3.59 \\ (37.2s) \end{tabular}	&	\begin{tabular}[c]{@{}c@{}} -3.59 \\ (25.1s) \end{tabular}	&  \begin{tabular}[c]{@{}c@{}} $\relbar$ 	 \\   \outT  \end{tabular}
	&	\begin{tabular}[c]{@{}c@{}}-10.47  \\ (30.5s) \end{tabular}		&	 \begin{tabular}[c]{@{}c@{}} -10.37	 \\ (26.8s) \end{tabular}	&\begin{tabular}[c]{@{}c@{}} -10.37	 \\ (1432.2s) \end{tabular}		
	&	\begin{tabular}[c]{@{}c@{}} -25.99 \\ (38.0s) \end{tabular} &\begin{tabular}[c]{@{}c@{}} -14.29 \\ (26.3s) \end{tabular}	&		\begin{tabular}[c]{@{}c@{}} -14.30		 \\ (1321.2s) \end{tabular}	
	&	\begin{tabular}[c]{@{}c@{}}  -35.47	 \\ (34.4s) \end{tabular} &\begin{tabular}[c]{@{}c@{}} -13.58 \\ (27.1s) \end{tabular}	&		\begin{tabular}[c]{@{}c@{}} -13.38$^*$		 \\  (1224.3s)  \end{tabular} \\ \midrule
{\tt 16\_7}
	&	\begin{tabular}[c]{@{}c@{}} -2.80 \\ (26.9s) \end{tabular}	&	\begin{tabular}[c]{@{}c@{}} -2.80 \\ (29.4s) \end{tabular}	&  \begin{tabular}[c]{@{}c@{}} $\relbar$ 	 \\   \outT  \end{tabular}
	&	\begin{tabular}[c]{@{}c@{}} -7.08\\ (30.2s) \end{tabular}		&	 \begin{tabular}[c]{@{}c@{}} -6.69	 \\ (27.1s) \end{tabular}	&\begin{tabular}[c]{@{}c@{}} -6.66 \\ (1431.2s) \end{tabular}		
	&	\begin{tabular}[c]{@{}c@{}} -14.45 \\ (31.6s) \end{tabular} &\begin{tabular}[c]{@{}c@{}} -13.79 \\ (26.9s) \end{tabular}	&		\begin{tabular}[c]{@{}c@{}} $\relbar$ 	 \\   \outT  \end{tabular}
	&	\begin{tabular}[c]{@{}c@{}} -47.95	 \\ (33.0s) \end{tabular} &\begin{tabular}[c]{@{}c@{}}-11.60  \\ (25.9s) \end{tabular}	&		\begin{tabular}[c]{@{}c@{}} $\relbar$ 	 \\   \outT  \end{tabular}  \\ \midrule
{\tt 16\_8}
	&	\begin{tabular}[c]{@{}c@{}} -3.44 \\ (26.2s) \end{tabular}	&	\begin{tabular}[c]{@{}c@{}} -3.44 \\ (27.0s) \end{tabular}	&  \begin{tabular}[c]{@{}c@{}} -3.44 	 \\   (1428.4s)   \end{tabular}
	&	\begin{tabular}[c]{@{}c@{}}-5.84 \\ (26.3s) \end{tabular}		&	 \begin{tabular}[c]{@{}c@{}}-5.52	 \\ (25.9s) \end{tabular}	&\begin{tabular}[c]{@{}c@{}} -5.26$^*$ \\ (1211.7s) \end{tabular}		
	&	\begin{tabular}[c]{@{}c@{}} -12.90\\ (32.5s) \end{tabular} &\begin{tabular}[c]{@{}c@{}} -12.49 \\ (28.2s) \end{tabular}	&		\begin{tabular}[c]{@{}c@{}} -12.46	 \\  (1420.3s)  \end{tabular}
	&	\begin{tabular}[c]{@{}c@{}} -27.85	 \\ (36.1s) \end{tabular} &\begin{tabular}[c]{@{}c@{}}-14.43 \\ (26.0s) \end{tabular}	&		\begin{tabular}[c]{@{}c@{}} -14.23$^*$	 \\   (1101.6s) \end{tabular}  \\
\bottomrule
 \end{tabular}
}
\caption{Comparison of bound  and computation time in sec., shown in parentheses, for instances~\eqref{def:Sexp},
with $n \in \{12,16\}$, using the 6th level semi-sparse hierarchies (i.e., based on NNCs~\eqref{eq:spacert}-Simplex/Box) and the Lasserre hierarchy (i.e., based on NNC~\eqref{eq:puti}), with $\deg \sigma_0 = 4$ in the corresponding NNCs.
} \label{tabsp4:value}
\end{table}

\section{The unbounded case}
\label{sec:copocert}

Given the key role that compactness plays for SOS certificates and their alternatives, a question that has attracted much research attention is which NNCs exist over \emph{unbounded}  semialgebraic sets.   In particular, \citet{Powe04, Mars10} derive a \german{} for the case where the underlying domain is
a cylinder with a compact cross-section. \citet{powers12} derive a~\german{} for the case where the underlying domain is a strip or a half-strip.  More general~\germans{} for possibly noncompact sets based on Putinar's~\citep{Puti93}
 and Schm\"udgen's~\citep{schmudgen1991k} \germans{} are presented in~\citep{NieDS06, DemmNP07, Mars09, moreIdeals1, Guo}. The latter results exploit gradient, Jacobian and KKT ideals.
More recently, \germans{} for noncompact semialgebraic sets are shown to exist if a specific modification of the set is compact~\citep{JeyaLL13} or if the polynomial of interest is coercive~\citep{jeya14}. Two other examples of research in this direction closely related to our work are the results in~\citep{putinar1999solving, dickinson2015extension}, which could be interpreted as constructing very particular liftings to higher dimensions. Also, \citep{mai2021positivity} presents new proofs for results in~\citep{putinar1999solving}  that provide degree bounds on the SOS polynomials used in the \germans{}.

In Theorem~\ref{thm:ineqGen} of Section~\ref{subsec:results_nc}, we derive a lifting for unbounded sets similar to the one for compact sets. In Section~\ref{sec:genpolya}, we use this lifting  to generalize P{\'o}lya's \german~\citep[][Sec. 2.2]{HardLP88} to positive polynomials on any generic semialgebraic set. This allows the use of SOS, SONC, SAG, and DSOS/SDSOS polynomials as the base class to  certify the nonnegativity of polynomials over unbounded semialgebraic sets. Our results require assumptions which hold generically (in the sense that it holds for almost all instances) for polynomials that are positive over the semialgebraic set of interest (Corollary~\ref{cor:generic}).

A subtle difference between the compact and the unbounded case, which makes the construction of NNCs for positive polynomials on $S$ difficult is that when $S$ is unbounded  $\P^+_d(S)$ does not coincide with the interior of $\P_d(S)$. Indeed, in the unbounded case, the constant polynomial $1 \notin \inter \P_d(S)$ for any $d$.
Not surprisingly, one needs more than positivity to construct liftings in the case of unbounded sets. Namely, the polynomial must be {\em strongly positive} (cf. Definition~\ref{def:posInf}). Thus, we begin this section by introducing strong positivity in Section~\ref{sec:strongpos}. Later, in Section~\ref{subsec:results_nc}, we show that this condition is sufficient for the existence of the proposed lifting for unbounded sets (Theorem~\ref{thm:ineqGen}). Section~\ref{subsec:generic} takes a deeper look at  strong positivity. In general, not every polynomial in $\inter \P_d(S)$ is strongly positive. Still, we prove that strong positivity captures the interior generically. Also, we show large classes of unbounded examples where this is true (Proposition~\ref{prop:suffCond}).
Consequently, the proposed certificate is guaranteed to certify the nonnegativity of polynomials in cases from which results by \citet{Netz09} show that certificates of the form~\eqref{eq:puti} do not exist.

\subsection{Strong positivity}
\label{sec:strongpos}

To analyze the sign of a polynomial $p$ on an unbounded set  $S$, we need to consider the \emph{asymptotic behavior} of $p$ on $S$, or in other words, the \emph{behavior of $p$ at infinity}~\citep[see, e.g.,][]{Rezn00}.  For that purpose, we introduce some additional terminology. Given a polynomial $p \in \R[x]$, let $\tilde p$ denote the homogeneous component of $p$ of the highest total degree. That is, $\tilde p$ is obtained by dropping from $p$ all the terms whose total degree is less than $\deg p$. Now, we can analyze the sign of $p$ as follows. Let $y \in \R^n$.
If $\tilde p(y)>0$ then there is $t_0 \in \R$ such that $p(ty)>0$ for all $t>t_0$, since the homogeneous component of the highest degree will eventually dominate the behavior of $p$. Similarly, if $\tilde p(y)<0$, then $p$ will become eventually negative in the $y$-direction. However, if $\tilde p(y)=0$, we do not know how $p(ty)$  behaves when $t \in \R$ tends to infinity. The next definition introduces a concept that allows analyzing the asymptotic behavior of polynomials on semialgebraic sets

\begin{definition}\label{def:tilde} Let $g_1, \dots, g_m, h_1,\dots,h_r \in \R[x]$ and let  $S = \{x \in \R^n: g_1(x)\ge 0,\dots,g_m(x) \ge 0, h_1(x)=0,\dots,h_r(x) =0\}$.
 We denote by $\tilde{S}$ the following set
\begin{align}
\tilde{S}  &=\left \{ x\in \R^n: \tilde g_1(x) \ge 0,\dots,\tilde g_m(x) \ge 0,  \tilde h_1(x)=0,\dots,\tilde h_r(x) =0 \right \}. \label{def:sTilde}
\end{align}
\end{definition}

Now, we are ready to introduce the main condition needed for the existence of liftings over unbounded sets.
\begin{definition}[Strong positivity]\label{def:posInf} A polynomial $p \in \R[x]$ is \emph{strongly positive} on~$S$ if
\begin{equation}
p \in \P^+(S ) \text{ and }  \tilde p \in \P^+(\tilde S \setminus \{0\}). \label{eq:poly}
\end{equation}
\end{definition}

Strong positivity has been used in related results in which the set $S$ might be unbounded (see, e.g., \citep[][Thm~4.2]{putinar1999positive},  \citep[][Prop.~3.5]{Dickinson2018}) as it captures the behaviour of polynomials at  infinity~\citep[see, e.g.,][]{Rezn00}. In particular, strong positivity is a sufficient condition to guarantee the existence of NNCs in~\cite{Dickinson2018}.
As discussed in Section~\ref{subsec:generic}, this condition holds {\em generically}. The precise meaning of generical conditions is given in Definition~\ref{def:genericity}.

\subsection{A lifting for the unbounded case} \label{subsec:results_nc}
Next, we present an extension of Proposition~\ref{prop:CompEqu} to the case of general, not necessarily compact, sets.  In contrast to the compact case, the noncompact case requires additional assumptions. Namely,  the polynomial $p$ has to be strongly positive on $S$ and of a large enough degree. Note that this latter condition is mild as it can be satisfied by multiplying $p$ by a polynomial that is positive everywhere  of an appropriate degree (e.g., $(1 + \sum_{i=1}^n x_i^2)^k$ for some fixed $k \in \N$). Analogously to Section~\ref{sec:compactcase} for the compact case, we build upon an existing result for sets defined with equalities.

\begin{proposition}[{\citep[][Lem. 4.4]{OlgaThesis}}]\label{prop:UbEqu} Let $h_1,\dots,h_m \in \P_d(\R^n_+)$ be such that $S=\{x\in \R^n_+:  h_1(x) = 0,\dots,  h_m(x) = 0\}$ is non-empty. Let  $p\in\R_{=d}[x]$ be strongly positive on $S$. Then there is $F\in\R_{=d}[x]$ strongly positive on $\R^{n}_+$ and $\alpha_j\in \R_{d-\deg g_j}[x]$ for  $j = 1,\dots,m$ such that
\[p(x) = F(x)+\sum_{j=1}^m \alpha_j(x) h_j(x).\]
\end{proposition}

Proposition~\ref{prop:UbEqu} generalizes to unbounded sets Proposition~\ref{prop:CompEqu} used in Section~\ref{sec:compactcase} for compact sets (when $S = \R^n_+$ in Proposition~\ref{prop:CompEqu}). Analogously to  the compact case, Proposition~\ref{prop:UbEqu} is the stepping stone to obtain a lifting.

\begin{theorem}
\label{thm:ineqGen}
Let $g_1,\dots,g_m \in \R[x]$ be such that $S=\{x\in \R^n: g_1(x)\ge 0,\dots,g_m(x) \ge 0\}$ is non-empty. Let  $p\in \P^+(S)$ such that $\tilde p\in \P^+(\tilde{S}  \setminus \{0\})$ be given, and define $d_{\max}:=\max \{\deg g_1,\dots,\deg g_m, \allowbreak \lceil \tfrac{\deg p}{2} \rceil \}$. Then there exists  $F \in \P^+_{2d_{\max}}(\R^{2n+m}_+)$  such that $\tilde F(y,z,u_1^{d_{\max}}, \dots, u_m^{d_{\max}}) \in \P^+_{2d_{\max}}(\R^{2n+m}_+\setminus \{0\})$ and
\begin{multline}
\label{eq:unblift}
 (1+e\tr y+e\tr z)^{2d_{\max}-\deg p} p(y-z) =
F \big  (y,z,(1+e\tr y+e\tr z)^{d_{\max}-\deg g_1} g_1(y-z), \dots,  \\ (1+e\tr y+e\tr z)^{d_{\max}-\deg g_m} g_m(y-z)\big ).
\end{multline}
Moreover, if $S  \subseteq \R^{n}_+$, we can replace $z$ by 0.
\end{theorem}
The proof of Theorem~\ref{thm:ineqGen} is given in Section~\ref{sec:proofsUnbd}.

The lifting constructed in Theorem~\ref{thm:ineqGen} uses rational polynomial expressions since the polynomial $p$ in~\eqref{eq:unblift} has a multiplier $(1+e\tr y+e\tr z)^{2d_{\max}-\deg p}$, also called \emph{a denominator}. As a result, the \germans{} derived using Theorem~\ref{thm:ineqGen} will have a similar denominator too. It is important that this denominator is known and fixed. The existence of  rational NNCs for semialgebraic sets is guaranteed by the Krivine-Stengle \german{}~\citep{KrivPos64,stengle1974nullstellensatz}.
 However, the problem of finding Krivine-Stengle certificates is not tractable in general because the denominator is unknown~\citep[see, e.g.,][for more details]{JeyaLL13}.
In contrast, the  rational \german{} we derive from the lifting in Theorem~\ref{thm:ineqGen} has a known denominator whose degree will be fixed when using degree-restricted versions of the NNC (see, Proposition~\ref{prop:genPolya}).

Theorem~\ref{thm:ineqGen}  requires the polynomial $p$ to be strongly positive over the set $S$. This condition is sufficient for the lifting to exist. The next example illustrates why strong positivity is also necessary.

\begin{example}[Strong positivity as a necessary condition]
\label{ex:simpleex}
Let $S = \{(x_1,x_2) \in \R^2_+: g(x_1,x_2):=(x_1x_2+1)(x_1-x_2)^2 \ge 0, -g(x_1,x_2) \ge 0\}$ and $p(x_1,x_2) = 1+ x_1^3 - (x_2-x_1)^3 \in \R_{=3}[x_1.x_2]$.  We have $\tilde{S} = \{(x_1,x_2) \in \R^2_+: \tilde{g}(x_1,x_2) =0\} = \{(t,t), (0,t), (t,0): t\ge 0\}$. Note that for any $t\ge0$, $p(t,t) = 1+t^3 >0$; that is, $p \in \P_3^+(S)$ (i.e., $p$ is positive on $S$). On the other hand,
for any $t>0$, $\tilde{p}(0,t) = -t^3 < 0$; that is, $\tilde{p} \notin \P_{3}^+(\tilde{S} \setminus (0,0))$ (i.e., $p$ is not strongly positive on $S$). We claim that as a result of the latter fact, there is no $F(x_1,x_2, u_1,u_2) \in \P_d(\R^n_+)$ such that  (cf. Theorem~\ref{thm:ineqGen})
\begin{equation}
\label{eq:certex}
(1+x_1+x_2)^{d-3}p(x_1,x_2) = F(x_1, x_2, g(x_1,x_2), -g(x_1,x_2)).
\end{equation}
 To show this, pick $F \in \R_d[x,u]$ such that~\eqref{eq:certex} holds. Evaluate the expression in~\eqref{eq:certex} at $(x_1, x_2) = (-\frac{1}{t}, t)$, multiply it by $t^d$, and take the limit when $t \to \infty$ to obtain that
\begin{equation}
\label{eq:certex2}
\lim_{t \to \infty} t^d \left (1-\frac{1}{t}+t \right )^{d-3} \left(1 -\frac{1}{t^3} - \left (t+\frac{1}{t} \right )^3\right) = \lim_{t \to \infty} t^{d} F\left (-\frac{1}{t}, t, 0, 0 \right ).
\end{equation}
Note that both expressions inside the limits in~\eqref{eq:certex2} are univariate polynomials in $t$. Therefore, their  behaviour as $t \to \infty$ is given by the homogeneous component of the largest degree of the two polynomials. Namely, from~\eqref{eq:certex2} it follows that
\begin{equation}
\label{eq:certex3}
\lim_{t \to \infty} -t^{2d}  = \lim_{t \to \infty} c \cdot t^{2d},
\end{equation}
where $c$ is the coefficient of the monomial $x_2^{d}$ in $F(x_1,x_2, u_1,u_2)$. From~\eqref{eq:certex3}, it follows that $c < 0$. However, this implies that $\lim_{t \to \infty} F(0,t,0,0) = \lim_{t \to \infty} c \cdot t^{d} < 0$; that is, $F \notin \P_d(\R^n_+)$.

Note that (loosely speaking) the point $(-\frac{1}{t}, t), t > 0$ used to evaluate~\eqref{eq:certex} corresponts, at infinity, with the point $(0,t), t >0$ used to show that $p$ is not strongly positive. Also note that we consider $F$ in~\eqref{eq:certex} to be of any degree greater than or equal to $\deg p$. Thus, this example illustrates that the strong positivity condition in Theorem~\ref{thm:ineqGen} cannot be relaxed, even if we allow $F$ to have any arbitrarily large degree.

Finally, let $S' = \{(x_1,x_2) \in \R^2_+: (x_1-x_2)^2 = 0\}$ and note that since $(x_1x_2 + 1) \ge 0$ for all $(x_1,x_2) \in \R^n_+$, it follows that $S = S'$. However, $\tilde{S'} = \{(t,t): t \in \R_+\} \subset \tilde{S} = \{(t,t), (0,t), (t,0): t\ge 0\}$. Thus, if we use the alternative representation of $S$ given by $S'$, then $p$ is indeed strongly positive on $S'$, and thus liftings exist for $p$ on $S'$.
\end{example}

\subsection{Extending P{\'o}lya's \german{}}
\label{sec:genpolya}

Now, we use the lifting for unbounded sets  in Theorem~\ref{thm:ineqGen}  to derive novel \germans{} for polynomials over possibly unbounded semialgebraic sets. Specifically, we  generalize
P{\'o}lya's \german{}~\citep[][Sec. 2.2]{HardLP88}, which can be used to certify nonnegativity of a homogeneous polynomial over the nonnegative orthant, to an NNC for strongly positive polynomials. For that purpose, we begin by stating P{\'o}lya's \german{}.

\begin{theorem}[{P{\'o}lya's Positivstellensatz~\citep[][Sec. 2.2]{HardLP88}}] \label{thm:polya} Let  $G\in \R[x]$ be a homogeneous polynomial such that $G \in \P^+(\R^n_+ \setminus \{0\})$. Then for some $r>0$ all
coefficients of $(e \tr x)^r G(x)$ are nonnegative.
\end{theorem}

P{\'o}lya's \german{} produces an NNC for homogeneous polynomials positive on $\R^n_+$.  To extend this result we need first a \german{} for polynomials, not necessarily homogeneous, positive on $\R^n_+$. Proposition~\ref{prop:PolyaPols} next is an equivalent version of  P{\'o}lya's \german{} for non-homogeneous polynomials.

\begin{proposition}\label{prop:PolyaPols} Let  $G\in \R[x]$ be a polynomial strongly positive on $\R^n_+$. Then for some $r>0$ all
coefficients of $(1 + e \tr x)^r G(x)$ are nonnegative.
\end{proposition}
\begin{proof}
Let $G\in \R[x]$ of degree $d$,  strongly positive on $\R^n_+$, be given. Let $G_0 \in \R[x_0,x]$ be the homogenous polynomial of degree $d$ defined by $G_0(x_0,x_1,\dots,x_n) = x_0^d G(x_1/x_0,\dots,x_n/x_0)$. Now we prove that $G_0 $ is positive on $\R^{n+1}_+ \setminus \{0\}$. Let $s \in \R^{n+1}_+ \setminus \{0\}$. If $s_0 > 0$ we have
$G_0(s) = s_0^d G(s_1/s_0,\dots,s_n/s_0) > 0 $, as $G \in \P^+_d(\R^{n}_+)$. On the other hand, if $s_0 = 0$, we have $G_0(s) = \tilde G(s_1,\dots,s_n)  > 0 $, as $\tilde G \in \P^+_d(\R^{n}_+)$. By Theorem~\ref{thm:polya} there is $r>0$ such that all coefficients of $(x_0 + e\tr x)^rG_0(x_0,x)$ are nonnegative. By replacing $x_0 = 1$ we obtain that all coefficients of   $(1 + e\tr x)^rG_0(1,x) = (1 + e\tr x)^rG(x)$ are nonnegative.
\end{proof}

Notice that we can not just extend Proposition~\ref{prop:PolyaPols} to obtain a \german{} over unbounded sets, as the lifting $F$ in Theorem~\ref{thm:ineqGen} is not strongly positive, instead it is positive on $\R^{2n+m}_+$ with $\tilde F(y,z,u_1^{d_{\max}}, \dots, u_m^{d_{\max}})$ is positive on $\R^{2n+m}_+ \setminus \{0\}$.  These liftings are not necessarily strongly positive on $\R^{2n+m}$. For instance, let $d_{\max}  =4$ and consider $F(x,y,u) = x^2y^2 + u + 1$. Then $F$ is positive on $\R^3_+$ and $\tilde F(x,y,u^{d_{\max}}) = x^2y^2 + u^4$ is positive on $\R^3_+\setminus\{0\}$. But, $\tilde F(x,y,u) = x^2y^2$ is not positive on $\R^3_+\setminus\{0\}$ as $\tilde F(0,0,1) = 0$.
Therefore, we need an extension of Proposition~\ref{prop:PolyaPols} specialized for this type of lifting in Theorem~\ref{thm:ineqGen}.

For any $d, r, m \in \N$, $\kappa \in \N^m$, $t \in \R$, and $u \in \R^m$, let
\[
D^r_{d,\kappa}(t,u)  = \sum_{\beta \in \N^{m}_{\lfloor \frac{r}{d} \rfloor}} {r \choose d\beta} t^{r - \kappa\tr \beta } u^{\beta}.
\]
 The next result is an extension of Proposition~\ref{prop:PolyaPols}, where the denominator is replaced by $D^r_{d}(1+e\tr x,u)$.
\begin{proposition}\label{prop:genPoly} Let $d \ge 0$, $s \ge 0$ and $F  \in \P^+_s(\R^{n+m}_+)$ be such that $\tilde F(x,u_1^{d},\dots,u_m^{d}) \in \P^+_s(\R^{n+m}_+ \setminus \{0\})$. Then for some  $r>0$ there are $c_{\alpha, \gamma} \ge 0$ for $(\alpha, \gamma) \in \N^{n+m}$ with $|\alpha|+d|\gamma| = r+s$ such that
\[
D^r_{d,ed}(1 + e \tr x,u) F(x,u) = \sum_{\substack{(\alpha, \gamma) \in \N^{n+m}\\ |\alpha|+d|\gamma| \le r+s}} c_{\alpha, \gamma} x^{\alpha} u^{\gamma}
\]
\end{proposition}
The proof of Proposition~\ref{prop:genPoly} is given in Section~\ref{sec:proofsUnbd}. Notice that when $d=1$ we have that $D^r_{1,e}(1 + e\tr x,u) = (1 + e\tr x + e\tr u)^r$
and thus Proposition~\ref{prop:PolyaPols} coincides with Proposition~\ref{prop:genPoly} when $d=1$.

With Proposition~\ref{prop:genPoly} at hand, we can now extend P{\'o}lya's \german{} to certify nonnegativity over general semialgebraic sets with any base class containing the nonnegative constant polynomials (i.e., $\R_+$).

\begin{proposition}[P{\'o}lya's \german{} extension]
\label{prop:genPolya}
Let a base class $\K$ satisfying  $\R_+ \subseteq \K \subset \R[x] $ be given. Let $g_1,\dots,g_m \in \R[x]$ be such that $S=\{x\in \R^n: g_1(x)\ge 0,\dots,g_m(x) \ge 0\}$ is non-empty. Let  $p$ be strongly positive on $S$, and let $d_{\max}:=\max \{\deg g_1,\dots,\deg g_m,\lceil \tfrac{\deg p}{2} \rceil \}$. Then
 there exist $r \ge 0$ and $q_{\alpha, \beta, \gamma} \in \K$ for $(\alpha,\beta,\gamma) \in \N^{2n+m}$ with $|\alpha|+|\beta|+d|\gamma| = r+ 2d_{\max}$ such that
\begin{equation}
\begin{split}
D^r_{d_{\max},(\deg g_1, \dots, \deg g_m)}\left (1+e\tr y + e\tr z,g(y-z)\right )(1+e\tr y+e\tr z )^{2d_{\max}-\deg p}p(y-z)\\
= \sum_{\substack{(\alpha,\beta,\gamma) \in \N^{2n+m}\\ |\alpha|+|\beta|+d_{\max}|\gamma| \le r+ 2d_{\max}}} q_{\alpha,\beta,\gamma}(y-z) y^\alpha z^\beta g(y-z)^\gamma.
\end{split} \label{eq:genPolyaCert}
\end{equation}
Moreover if $S  \subseteq \R^{n}_+$ then we can replace $z$ by 0.
\end{proposition}
\begin{proof} It is enough to prove the statement for the case $\K = \R_+$.
Using Theorem~\ref{thm:ineqGen}, we obtain that there exists $F \in \P^+_{2d_{\max}}(\R^{2n+m}_+)$ such that $\tilde F(y,z,u^{\circ d_{\max}}) \in \P^+_{2d_{\max}}(\R^{n+m}_+\setminus \{0\})$ and
\begin{multline} \label{eq.4_7}
(1+e\tr y+e\tr z)^{2d_{\max}-\deg p}p(y-z) = F(y,z,(1+e\tr y + e\tr z)^{d_{\max}-\deg g_1}g_1(y-z), \dots, \\ (1+e\tr y + e\tr z)^{d_{\max}-\deg g_m}g_m(y-z)).
\end{multline}
Applying Proposition~\ref{prop:genPoly}, we obtain that there is $r>0$ and $c_{\alpha,\beta,\gamma}\ge 0$ for  $(\alpha,\beta,d_{\max}\gamma) \in \N^{2n+m}_{r+2d_{\max}}$  such that
\begin{equation} \label{eq.4_8}
D^r_{d_{\max},d_{\max}e}(1+e\tr y+e\tr z,u)F(y,z,u)
= \sum_{\substack{(\alpha,\beta,\gamma) \in \N^{2n+m}\\ |\alpha|+|\beta|+d|\gamma| \le r+ 2d_{\max}}} c_{\alpha,\beta,\gamma}y^\alpha z^\beta u^\gamma.
\end{equation}
Now, we substitute $u_j \leftarrow (1+e\tr y+e\tr z)^{d_{\max}-\deg g_j}g_j(y-z)$, $j=1,\dots,m$ in equation~\eqref{eq.4_8} and combine with equation~\eqref{eq.4_7} to obtain
\begin{align*}
\hspace{15pt}&\hspace{-15pt}D^r_{d_{\max},(\deg g_1, \dots, \deg g_m)}(1+e\tr y+e\tr z,g(y-z))(1+e\tr y+e\tr z)^{2d_{\max}-\deg p}p(y-z) \nonumber \\
= & D^r_{d_{\max},d_{\max} e}(1+e\tr y+e\tr z,(1+e\tr y+e\tr z)^{d_{\max}-\deg g_1}g_1(y-z), \dots, (1+e\tr y+e\tr z)^{d_{\max}-\deg g_m}g_m(y-z)) \nonumber\\
&\times F(y,z,(1+e\tr y + e\tr z)^{d_{\max}-\deg g_1}g_1(y-z), \dots,(1+e\tr y + e\tr z)^{d_{\max}-\deg g_m}g_m(y-z) )\nonumber \\
= & \sum_{\substack{(\alpha,\beta,\gamma) \in \N^{2n+m}\\ |\alpha|+|\beta|+d_{\max}|\gamma| \le r+ 2d_{\max}}} c_{\alpha,\beta,\gamma}y^\alpha z^\beta (1+e\tr y+e\tr z)^{\sum_{j=1}^m \gamma_j(d_{\max} - \deg g_j)}
g(y-z)^\gamma \label{eq:temp_polya_final} \\
= & \sum_{\substack{(\alpha,\beta,\gamma) \in \N^{2n+m}\\ |\alpha|+|\beta|+d_{\max}|\gamma| \le r+ 2d_{\max}}} q_{\alpha,\beta}y^\alpha z^\beta  g(y-z)^\gamma, \nonumber
\end{align*}
where $q_{\alpha,\beta} \ge 0$ for all $(\alpha,\beta)$.  The first equality follows from a direct substitution in the definition of $D^r_{d_{\max},d_{\max} e}$. The last equality follows after expanding $(1+e\tr y+ e\tr z)^
{\sum_{j=1}^m \gamma_j(d_{\max} - \deg g_j)}$.
\end{proof}

Two characteristics of  the non-SOS \german{} in Proposition~\ref{prop:genPolya} are important for optimization. First, once $r$ is fixed, the degrees of both right- and left-hand sides are fixed and known. Second, there is a lot of flexibility for choosing the base class~$\K$, the only requirement to the base class~$\K$ in Proposition~\ref{prop:genPolya} is that~$\K$ contains the nonnegative constant polynomials (see Remark~\ref{rem:base_class_again}).
Thus, the polynomials in $\K$ do not need to be SOS; instead, other classes of polynomials could be used, such as SONC, DSOS/SDSOS and SAG polynomials. As mentioned in Section~\ref{sec:SchGen}, efficient numerical optimization techniques exist  to certify membership of polynomials in these classes.

\begin{remark} \label{rem:base_class_again} When the base class~$\K$ consists of globally nonnegative polynomials, the expression~\eqref{eq:genPolyaCert} is clearly an NNC. However,  \eqref{eq:genPolyaCert} provides an NNC even when the polynomials in~$\K$ are not globally nonnegative.
Namely, when~$\K \subseteq \P(\R^n_+)$.
For instance,  $\K$ can be the SAG polynomials.
\end{remark}

\subsection{Genericity of strong positivity}
\label{subsec:generic}

In what follows, we show that the strong positivity condition introduced in Definition~\ref{def:posInf}, which is
relevant (see Example~\ref{ex:simpleex}) in our results for unbounded sets, holds generically.

First, to connect our strong positivity assumption (see Definition~\ref{def:posInf}) with some relevant results in the literature,
we introduce the notion of \emph{horizon cone}~(see, e.g., {\citet[][]{RockW98, PenaVZ15}}).

  \begin{definition}[\sloppy Horizon cone]
\label{def:horizon}
The horizon cone $S^{\infty}$  of
a given set  $S \subseteq \R^n$ is defined~as:
\begin{equation*}
\begin{split}
S^{\infty}:=\{ y \in \R^n:  \text{ there exist } x^k \in S, \lambda^k \in \R_+, k=1,2,\dots 
\text{ such that }\lambda^k \downarrow 0 \text{ and }   \lambda^k x^k \rightarrow y\}.
\end{split}
\end{equation*}
\end{definition}

We also recall a result that will be key in showing the genericity of the strong positivity condition.

\begin{lemma}[{\citep[][Lem.~1]{PenaVZ15}}]
\label{lem:tildeInf}
Let $S\subseteq \R^n$. If $p\in \R[x]$ is bounded on $S$ from below, then $\tilde p \in \P_{\deg p}(S^{\infty})$.
\end{lemma}

Now, we formally introduce the meaning of genericity in our context.
\begin{definition}[Genericity]
\label{def:genericity}
A property is said to hold generically (or for a generically chosen element) in a  given set if it holds for all  elements of the set, except  for a subset with Lebesgue measure zero.
\end{definition}

This definition of genericity has been commonly used in PO~\citep[see, e.g.,][]{nie2014optimality, Nie2013, WangRational, lasserre2009convex}. In particular, \citet{nie2014optimality} shows that if the quadratic module associated with the underlying feasible set of a PO problem is Archimedean, then the Lasserre hierarchy~\cite{lasserre2001global}
has finite convergence generically. Also, \citet{lasserre2009convex} uses this definition of genericity to characterize the convex sets that are representable with LMIs.

Next, we define the concept of {\em closedness at infinity} that is crucial in characterizing strongly positive polynomials.

\begin{definition}[\citet{WangRational,Guo,Nie2013}, \citet{PenaVZ15}]  \label{def:closed_at_inf} A semialgebraic set $S$ is  called {\em closed at infinity} if $\tilde{S} = S^\infty$.
\end{definition}

Closedeness at infinity is one of the sufficient conditions for hierarchies of relaxations to PO problems proposed in \cite{WangRational,Guo,Nie2013} to converge to the PO's optimal value~\citep[see, e.g.,][Thm~2.5, condition~(d)]{Nie2013} \footnote{To be more precise, the definition of closedness at infinity in  \cite{WangRational,Guo,Nie2013} looks different from Definition~\ref{def:closed_at_inf},  but the definitions are  analogous to each other.  }. In~\cite{Nie2013,WangRational}, this condition is shown to hold generically.

\begin{proposition}[\citet{Nie2013,WangRational}] \label{prop:genericTilde=inf} If the defining polynomials of the semialgebraic set $S$ are chosen generically, then $S^\infty = \tilde S$; that is, $S$ is closed at infinity.
\end{proposition}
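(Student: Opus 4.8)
The plan is to prove the two inclusions separately. Since Lemma~\ref{lem:tildeInf}\ref{lem:tildeInf2} already gives $S^\infty \subseteq \tilde{S}$ for every semialgebraic set irrespective of genericity, the entire content of the proposition lies in the reverse inclusion $\tilde{S} \subseteq S^\infty$, which is where genericity must enter. Write $S = \{x \in \R^n : h_1(x) \ge 0, \dots, h_m(x) \ge 0,\ g_1(x) = 0, \dots, g_r(x) = 0\}$ with $\deg g_j = e_j$, assume $S \neq \emptyset$ (the relevant case), and recall from~\eqref{eq:polyConnect} that $g_j^h(0, x) = \tilde{g}_j(x)$ and $h_i^h(0,x) = \tilde h_i(x)$. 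Fix $y \in \tilde{S}$ with $y \neq 0$; the goal is to produce a sequence witnessing $y \in S^\infty$ in the sense of Definition~\ref{def:horizon}.

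First I would treat the equality constraints via homogenization and the implicit function theorem. Working in variables $(s, w) \in \R \times \R^n$, where $s$ plays the role of $1/t$, consider the polynomial map $\Phi(s,w) = (g_1^h(s,w), \dots, g_r^h(s,w))$, so that $\Phi(0, y) = (\tilde{g}_1(y), \dots, \tilde{g}_r(y)) = 0$. The crucial generic fact is that, when the $g_j$ are chosen generically, their leading forms $\tilde{g}_j$ are generic forms of degree $e_j$, so their common affine zero set is a smooth complete intersection away from the origin; in particular the Jacobian $[\nabla \tilde{g}_1(y), \dots, \nabla \tilde{g}_r(y)]$ has full rank $r$ (this forces $r \le n$; when $S\neq\emptyset$ the overdetermined case $r>n$ degenerates to $\tilde S=\{0\}$ and is immediate). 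Choosing $r$ coordinates whose corresponding minor is invertible and freezing the remaining $n - r$ coordinates at the values of $y$, the implicit function theorem applied to $\Phi$ at $(0,y)$ produces a real-analytic branch $w(s)$ with $w(0) = y$ and $\Phi(s, w(s)) = 0$ for small $s$. For $s > 0$ set $x(s) := w(s)/s$; then $g_j(x(s)) = s^{-e_j} g_j^h(s, w(s)) = 0$ and $s\, x(s) = w(s) \to y$ as $s \downarrow 0$, so $x(s)$ is a genuine ray to infinity in direction $y$.

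Next I would account for the inequalities. If all leading forms are strictly positive at $y$, i.e. $\tilde{h}_i(y) > 0$ for every $i$, then $h_i^h(0, y) = \tilde h_i(y) > 0$, so by continuity $h_i^h(s, w(s)) > 0$ and hence $h_i(x(s)) > 0$ for all small $s > 0$; the curve above lies in $S$ and $y \in S^\infty$. Let $A := \{y \neq 0 : \tilde{g}_j(y) = 0 \ \forall j,\ \tilde{h}_i(y) > 0 \ \forall i\}$ denote this strict locus; the previous argument shows $A \subseteq S^\infty$, and since $S^\infty$ is closed, $\closure A \subseteq S^\infty$. It therefore remains to show that, generically, $\tilde{S} \setminus \{0\} \subseteq \closure A$; that is, every boundary direction of $\tilde{S}$ at which some $\tilde{h}_i$ vanishes is a limit of strict directions. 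Generically the smooth variety $M = \{\tilde{g}_j = 0\} \setminus \{0\}$ and the hypersurfaces $\{\tilde{h}_i = 0\}$ are in general position on $M$, so each $\tilde{h}_i$ restricted to $M$ has nonvanishing gradient along its zero set and one can move off the boundary into $\{\tilde h_i > 0\}$ while staying on $M$. This yields $\tilde S \setminus \{0\} \subseteq \closure A \subseteq S^\infty$, and since $0 \in S^\infty$ as well, the inclusion $\tilde{S} \subseteq S^\infty$ follows.

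The main obstacle is the boundary analysis of the inequalities, not the equalities: the implicit-function construction builds a curve tangent to the direction $y$, but when $\tilde{h}_i(y) = 0$ the sign of $h_i$ along that curve is governed by a first-order term that need not be nonnegative, so the curve may leave $S$. This is precisely why one cannot hope for $\tilde S = S^\infty$ for arbitrary descriptions and must invoke genericity: it is general position of the $\{\tilde h_i = 0\}$ relative to $M$ (ruling out tangencies and spurious lower-dimensional boundary components) that guarantees $\tilde S \setminus \{0\}$ is the closure of its strict locus. Making ``general position'' quantitatively precise --- e.g. verifying it defines a Zariski-open, hence full-measure, condition on the coefficients --- is the technical heart of the argument and is carried out in~\cite{Nie2013, WangRational}.
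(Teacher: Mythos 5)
The first thing to note is that the paper does not prove Proposition~\ref{prop:genericTilde=inf} at all: it is imported as a black box from \cite{Nie2013,WangRational}, so there is no in-paper proof to compare yours against. Judged on its own terms, your sketch correctly isolates where the work lies --- $S^\infty \subseteq \tilde S$ is free from Lemma~\ref{lem:tildeInf}\ref{lem:tildeInf2}, and the content is lifting a direction $y \in \tilde S \setminus \{0\}$ to a curve in $S$ escaping to infinity in direction $y$ --- and the mechanism you propose (implicit function theorem applied to the homogenizations at $(0,y)$, continuity for the strictly satisfied inequalities, closedness of the horizon cone plus a general-position argument for the active ones) is the standard one and is sound in outline.

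However, as a proof it has a genuine gap, which you yourself flag in the last paragraph: every genericity claim that actually carries the argument --- that the leading forms $\tilde g_j$ cut out a smooth complete intersection away from the origin, that the hypersurfaces $\{\tilde h_i = 0\}$ are in general position on that variety so that $\tilde S \setminus \{0\}$ equals the closure of its strict locus, and above all that these conditions are Zariski-open and of full Lebesgue measure in the space of coefficients --- is asserted and then deferred to \cite{Nie2013,WangRational}. Since the proposition is precisely the statement that a certain condition holds generically, deferring the verification that your sufficient conditions are themselves generic means the proposal is not an independent proof but a (correct) reduction of the proposition to those transversality facts. Two smaller points to watch. First, the statement as literally written fails when $S = \emptyset$: then $S^\infty = \emptyset$ by Lemma~\ref{lem:tildeInf}\ref{lem:tildeInf3}, while $0 \in \tilde S$ whenever the defining polynomials are nonconstant; genericity of the coefficients does not force $S \neq \emptyset$, so some nonemptiness convention is implicit and your parenthetical treatment of degenerate cases does not cover it. Second, in the boundary step you need a simultaneous ascent direction on $M$ for all active $\tilde h_i$, which requires the restricted gradients of the active constraints to be linearly independent; this is generic only when the number of active constraints does not exceed $\dim M$, and the excess case has to be dispatched by showing the corresponding stratum is generically empty --- a step your sketch gestures at but does not carry out.
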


From this result and Lemma~\ref{lem:tildeInf}, the genericity of the strong positivity condition follows.

\begin{corollary} \label{cor:generic} Let $d>0$. If the defining polynomials of the semialgebraic set $S$ are chosen generically, and $p$ is chosen generically among polynomials bounded below on $S$, then $\tilde{p} \in \P_{\deg p}^+(\tilde{S}\setminus \{0\})$.
\end{corollary}

\begin{proof}
From Proposition~\ref{prop:genericTilde=inf}, generically $\tilde{S}  =S^\infty$. Now note that $p \in \R_d[x]$ bounded below on $S$ implies $\tilde p \in \P(S^\infty)$ by Lemma~\ref{lem:tildeInf}. Hence,  $\tilde p \in \P(\tilde S)$.  Since $\mathcal{C}:=\{p: \tilde{p} \in \P_{\deg p}(\tilde{S})\}$ is a convex cone with $\inter(\mathcal{C}) = \{p: \tilde{p} \in \P_{\deg p}^+(\tilde{S}\setminus \{0\})\}$,  generically  $\tilde{p} \in \P^+_{\deg p}(\tilde{S}\setminus \{0\})$.
\end{proof}

Even though Proposition~\ref{prop:genericTilde=inf} and Corollary~\ref{cor:generic} hold generically, one can construct a semialgebraic set~$S$ and a polynomial $p$ such that $\tilde{S}\neq S^\infty$ and $p$ is positive but \emph{not} strongly positive on $S$. To illustrate this, we revise  Example~\ref{ex:simpleex}.

\begin{example}[Example~\ref{ex:simpleex} revisited]
Let $S = \{(x_1,x_2) \in \R^2_+: g(x_1,x_2):=(x_1x_2+1)(x_1-x_2)^2 \ge 0, -g(x_1,x_2) \ge 0\}$ and $p(x_1,x_2) = 1+ x_1^3 - (x_2-x_1)^3 \in \R_{=3}[x_1.x_2]$. From the discussion in Example~\ref{ex:simpleex}, we have that:
\begin{enumerate}[label = (\roman*)]
\item $S^{\infty} \subset \tilde{S}$; that is, in this particular case, the set $S$ is not closed at infinity.
\item $p \in \P^+_3(S)$ and $\tilde{p} \notin \P_{3}^+(\tilde{S} \setminus (0,0))$; that is, in this particular case, $p$ bounded below on $S$ does not imply that $p$ is strongly positive on $S$.
\end{enumerate}
\end{example}

On the other hand, there is a wide range of interesting (non-generic) cases in which the results of Corollary~\ref{cor:generic} hold.  To present these cases, we use a characterization of the interior of the set of polynomials that are nonnegative over a given \emph{unbounded} set.

\begin{proposition} \label{prop:IntDescr} Let $S\subseteq \R^n$ be unbounded. Then
\[\inter \P_d(S) = \{ p \in \R_{=d}[x]:
p\in \P^+(S), \ \tilde p \in \P^+(S^{\infty} \setminus \{0\}) \}.\]
\end{proposition}
\begin{proof} First, let $p\in \inter \P_d(S)$. Since $\R_d[x]$ is a finite-dimensional vector space and $\P_d(S)$ is convex, the interior and the algebraic interior of $\P_d(S)$ coincide~\citep[see, e.g.,][Chapter 17]{Holmes}. Therefore,  $p\in \inter \P_d(S)$ if and only if for any $g\in \R[x]$ there is an $\eps>0$ such that $p+\eps g\in \P_d(S)$.  Hence, as $p\in \inter \P_d(S)$, we have $p\in \R_{=d}[x]$ and $p\in \P^+(S)$.
To show that  $ \tilde p \in \P^+(S^{\infty} \setminus \{0\})$, let $y\in \rec{S}$,  $y\neq 0$. Without loss of generality, $y_1>0$.
From $p\in \inter \P_d(S)$, it follows  that for some $\eps>0$ the polynomial $q(x):=p(x)-\eps x_1^{d} \in \P_d(S)$. From Lemma~\ref{lem:tildeInf}, $\tilde{q}\in \P_d(S^{\infty})$, therefore $\tilde{p}(y) \ge \eps y_1^{d} > 0$.
Thus, $\inter \P_d(S) \subseteq  \{ p \in \R_{=d}[x]:
p\in \P^+(S), \ \tilde p \in \P^+(S^{\infty} \setminus \{0\}) \}$.

To show that $\inter \P_d(S) \supseteq \{ p \in \R_{=d}[x]:
p\in \P^+(S),  \tilde p \in \P^+( S^\infty \setminus \{0\}) \}$, let $p \in \R_{=d}[x]$ such that $p\in \P^+(S)$ and $\tilde p \in \P^+(S^\infty \setminus \{0\}).$ For the sake of contradiction, assume  $p\notin \inter \P_d(S)$. Then there exists $q\in \R_d[x]$ such that for $k=1,2\dots$ there exists $x^{k} \in S$ such that
\begin{align*}
p(x^k)-\tfrac{1}{k} q(x^k) <0.
 \end{align*}
The sequence $x^k,  \ k=1,\dots$ must be unbounded. Otherwise, the sequence $x^k$ is contained in a compact set. Thus, $q$ attains a minimum in this set. This, together with the fact that $p\in \P^+(S)$, would contradict the assumption of $p\notin \inter \P_d(S)$.  Therefore, define $\lambda^k:=\tfrac{1}{\|x^k\|},\ k=1,\dots$ so that $\lim_{k \rightarrow \infty} \lambda^k=0$. The sequence $\lambda^kx^k$, $k=1,\dots$ is bounded and thus has a convergent subsequence with a limit $y \in S' := \{y \in S^\infty : \|y\| = 1\}$.
We have then, that for all $\eps >0$,
\[0 \ge  \lim_{k\rightarrow \infty} (\lambda^k)^d (p(x^k)-\eps q(x^k))=\begin{cases}
\tilde p(y), \text{ if } \deg q<d\\
\tilde p(y)-\eps \tilde q(y), \text{ if } \deg q=d.
\end{cases}\]
But $\tilde p \in \P^+(S')$ and $S'$ is compact. Thus, for some $\eps>0$ small enough we obtain a contradiction.
\end{proof}

From Proposition~\ref{prop:IntDescr}, it follows that if $\tilde{S} = S^\infty$, then $\inter \P_d(S) = \{ p \in R_{=d}: p \text{ is strongly positive } \allowbreak \text{on } S\}$. This fact allows  characterizing some cases in which the strong positivity condition in Theorem~\ref{thm:ineqGen} and Proposition~\ref{prop:genPolya} can be replaced by the condition $p \in \inter \P_d(S)$. Moreover, when $S$ is compact, $\inter \P_d(S)= \P^+_d(S)$, and thus, if $\tilde{S} = S^\infty$, positivity of $p$ on $S$ implies the strong positivity condition.

\begin{proposition}[{\citep[][Prop. 4.8]{OlgaThesis}}]\label{prop:suffCond} Let $g_1,\dots,g_m\in \R[x]$ and  $S=\{x\in \R^n: g_1(x) \ge 0,\dots, \allowbreak g_m (x) \ge 0\}$. If any of the following conditions hold, then $\tilde{S} = S^{\infty}$.
\begin{enumerate}[label = (\roman*)]
\item $g_m(x) = N-\|x\|^2$ for some $N>0$. \label{prop:compAs1}
\item  $g_1,\dots,g_m$ are homogeneous. \label{prop:Hom}
\item \label{prop:Polyh1}  $g_j(x)=q^j_1(x)\dotsm q^j_{k_j}(x)$ for some $k_j>0$ and $q^j_1,\dots, q^j_{k_j} \in \R_1[x]$. Notice that in this case $S$ is a union of polyhedra.
\item $n\ge 2$ and $S=\{x\in \R^n: \big( x_n-\sum_{i=1}^{n-1}x_i^2-b\big)q(x) \ge 0,\,x_n \ge 0\}$, where $b\in \R$ and $q\in \R[x]$ is such that $\tilde{q} \in \P^+(\R^n\setminus \{0\})$.   \label{prop:quadSuff}
\end{enumerate}
\end{proposition}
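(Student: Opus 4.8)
The plan is to reduce all four cases to a single statement: it suffices to prove that $\tilde{S} = S^{\infty}$ in each case. Indeed, once closedness at infinity is established, the strong positivity of every $p \in \inter \P_{\deg p}(S)$ follows by the same argument used in the proof of Corollary~\ref{cor:generic}\ref{cor:generic2}. If $S$ is unbounded, Proposition~\ref{prop:IntDescr} gives $p \in \P^+(S)$ and $\tilde p \in \P^+(S^{\infty} \setminus \{0\}) = \P^+(\tilde S \setminus \{0\})$, which is exactly strong positivity; if $S$ is bounded then $S^{\infty} = \tilde S = \{0\}$, so that $\tilde S \setminus \{0\} = \emptyset$ renders the condition on $\tilde p$ vacuous, while \eqref{eq:compint} yields $p \in \P_d^+(S)$. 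Moreover, by Lemma~\ref{lem:tildeInf}\ref{lem:tildeInf2} we always have $S^{\infty} \subseteq \tilde S$, so in every case only the reverse inclusion $\tilde S \subseteq S^{\infty}$ must be checked.

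Cases~\ref{prop:compAs1},~\ref{prop:Hom}, and~\ref{prop:quadSuff} are then short computations. For~\ref{prop:compAs1}, since $\tilde h_m(x) = -\|x\|^2$, membership in $\tilde S$ forces $\|x\|^2 \le 0$, hence $\tilde S = \{0\}$; as the constraint $h_m \ge 0$ makes $S$ compact, $S^{\infty} = \{0\}$ as well. For~\ref{prop:Hom}, each $h_j$ being homogeneous gives $\tilde h_j = h_j$ and thus $\tilde S = S$; since $h_j(ky) = k^{\deg h_j} h_j(y) \ge 0$ for $k \ge 0$, the set $S$ is a cone, so any $y \in S$ is witnessed as an element of $S^{\infty}$ by the sequence $x^k = ky$, $\lambda^k = 1/k$, giving $S \subseteq S^{\infty}$ and hence $\tilde S = S = S^{\infty}$. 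For~\ref{prop:quadSuff}, the leading forms are $\tilde h_1(x) = -\big(\sum_{i=1}^{n-1} x_i^2\big)\tilde q(x)$ and $\tilde h_2(x) = x_n$; because $\tilde q(x) > 0$ for $x \ne 0$, the inequality $\tilde h_1(x) \ge 0$ forces $x_1 = \dots = x_{n-1} = 0$, so $\tilde S = \{(0,\dots,0,t) : t \ge 0\}$. The reverse inclusion follows by checking that the points $(0,\dots,0,t)$ lie in $S$ for all large $t$ (here $\tilde q > 0$ guarantees $q(0,\dots,0,t) > 0$), which exhibits $(0,\dots,0,1)$ as an element of $S^{\infty}$.

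The main work is case~\ref{prop:Polyh1}. Writing $q^i_j = \ell^i_j + c^i_j$ with $\ell^i_j$ the homogeneous linear part, the product rule for leading forms gives $\tilde h_i = \prod_j \tilde q^i_j$, so that $\tilde S = \{y : \prod_j \ell^i_j(y) \ge 0 \text{ for all } i\}$. The plan is to decompose $S$ into the polyhedral cells $C_\epsilon = \{x : \epsilon^i_j q^i_j(x) \ge 0 \text{ for all } i,j\}$ indexed by sign patterns $\epsilon^i_j \in \{-1,+1\}$ obeying the parity condition $\prod_j \epsilon^i_j = +1$ for each $i$; one verifies that $S$ equals the union of the nonempty such cells, whence $S^{\infty} = \bigcup_\epsilon C_\epsilon^{\infty}$ since the horizon cone of a finite union is the union of the horizon cones, and $C_\epsilon^{\infty} = \{y : \epsilon^i_j \ell^i_j(y) \ge 0 \text{ for all } i,j\}$ is the recession cone of $C_\epsilon$. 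Given $y \in \tilde S$, setting $\epsilon^i_j = \operatorname{sign}(\ell^i_j(y))$ on the factors with $\ell^i_j(y) \ne 0$, and using $\prod_j \ell^i_j(y) \ge 0$ to fix the remaining signs so that the parity condition holds, produces a pattern $\epsilon$ with $y \in C_\epsilon^{\infty}$. The hard part will be ensuring that the cell $C_\epsilon$ thus selected is nonempty, that is, handling the degenerate directions $y$ at which some leading form $\tilde h_i(y)$ vanishes, where the straightforward ray argument $x^0 + ty \in S$ can break down because the factors with $\ell^i_j(y) = 0$ stay constant along the ray. I would dispose of this by a finite combinatorial argument over the sign patterns of the affine forms $q^i_j$, showing that whenever the natural cell is empty some other even cell whose recession cone still contains $y$ is nonempty, so that $y \in S^{\infty}$ regardless.
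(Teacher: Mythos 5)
Your overall reduction is exactly the one the paper uses: establish $\tilde S = S^\infty$ and then invoke Proposition~\ref{prop:IntDescr} (unbounded case) or compactness via~\eqref{eq:compint} (bounded case, where $\tilde S \setminus \{0\} = \emptyset$ makes the condition on $\tilde p$ vacuous). The difference is that the paper proves $\tilde S = S^\infty$ for all four cases by citation to \cite[Prop.~4.8]{OlgaThesis}, whereas you attempt it directly. Your arguments for cases~\ref{prop:compAs1}, \ref{prop:Hom} and \ref{prop:quadSuff} are correct and complete: in~\ref{prop:compAs1} the leading form $-\|x\|^2$ collapses $\tilde S$ to $\{0\}$ while compactness gives $S^\infty = \{0\}$; in~\ref{prop:Hom} the set is a cone, so $S \subseteq S^\infty \subseteq \tilde S = S$; in~\ref{prop:quadSuff} the positivity of $\tilde q$ off the origin pins $\tilde S$ to the ray $\{te_n : t \ge 0\}$, which is realized in $S^\infty$ by the feasibility of $(0,\dots,0,t)$ for large $t$.

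Case~\ref{prop:Polyh1}, however, contains a genuine gap, and it sits at exactly the hard point. Your cell decomposition $S = \bigcup_\epsilon C_\epsilon$ over even sign patterns, the identity $S^\infty = \bigcup_\epsilon C_\epsilon^\infty$ for finite unions, and the identification of $C_\epsilon^\infty$ with the recession cone when $C_\epsilon \neq \emptyset$ are all fine. But the crux is that $\tilde S$ is the union of the \emph{homogeneous} cones $K_\epsilon = \{y : \epsilon^i_j \ell^i_j(y) \ge 0\}$ over \emph{all} even patterns, while $S^\infty$ only collects those with $C_\epsilon \neq \emptyset$; one must show every $y$ lying only in cones of empty cells also lies in the cone of some non-empty one. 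This is not routine: infeasibility of the affine system $\{\epsilon^i_j q^i_j(x) \ge 0\}$ is entirely compatible with $K_\epsilon$ containing non-zero directions (already $\{x_1 \ge 1,\, -x_1 \ge 1\}$ in $\R^2$ is empty while its homogeneous cone is the whole $x_2$-axis), and by Farkas the directions $y$ that survive in $K_\epsilon$ are precisely those annihilating the forms appearing in an infeasibility certificate, so the natural fix of re-choosing $\epsilon^i_j$ on the factors with $\ell^i_j(y) = 0$ is constrained both by the parity condition within each constraint $i$ and by the need to land in a non-empty cell simultaneously for all $i$. Your closing sentence ("I would dispose of this by a finite combinatorial argument\dots showing that whenever the natural cell is empty some other even cell whose recession cone still contains $y$ is nonempty") states the lemma you need rather than proving it; as written, case~\ref{prop:Polyh1} is an announced plan, not a proof, and you should either supply that combinatorial argument or, as the paper does, cite \cite[Prop.~4.8]{OlgaThesis}.
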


\subsection{Computational examples}
\label{sec:copocertexp}
To illustrate key features of the P{\'o}lya's \german{} extension (Proposition~\ref{prop:genPolya}), we present  two examples. The first example shows that the P{\'o}lya's \german{} extension can be used to construct (LMI approximation) hierarchies to solve PO problems with unbounded feasible sets, and in particular, PO problems for which  hierarchies based on  Putinar's \german{}~\eqref{eq:puti} fail to give a lower bound for the problem.

\begin{example}[Unbounded generalization of Example~\ref{ex:sparse}]
\label{ex:unbounded_mp}
 Let $n\ge 3$, and $p(x):=  \sum_{i=3}^{n}x_i^3 - x_1^2-x_2^2$, $g_1(x): =  x_1 - \tfrac{1}{2}$, $g_2(x):=  x_2 - \tfrac{1}{2}$, $g_3(x): =   1- x_1x_2 $. Consider the problem
\begin{align}
\label{prob:unb1}
 z^*_n=\inf\{p(x):  x \in S: = \{x\in \R^n: g_1(x) \ge 0 , \ g_2(x) \ge 0, \ g_3(x) \ge 0, \ x_3 \ge 0,\dots,x_n \ge 0 \}\}.
\end{align}
This problem generalizes Example~\ref{ex:sparse} to an unbounded set and multiple variables, and its objective $p$ is strongly positive on $\R^n_+$. As in Example~\ref{ex:sparse},  Putinar's certificate for $p$ over $S$ does not exist. The problem's optimal value $z^*_n=-\tfrac{17}{4}$ for any $n\ge 3$ since the optimal solution is attained at $x^*_3 = \cdots = x^*_n = 0$. We numerically tested that Proposition~\ref{prop:genPolya} allows  to certify the nonnegativity of $p(x)+\tfrac{17}{4}$ on $S$ using $r=0$ and $\K = \R_+$, for $n=3,4,\ldots,20$ (i.e., the $r=0$ level of the hierarchy associated with~\eqref{eq:genPolyaCert} using $\R_+$ as the base class gives the optimal value of~\eqref{prob:unb1}). In fact, for all these values of $n$, the certificate obtained has rational coefficients with small denominators. Below, we explicitly present the certificate obtained for the case $n=3$.
\begin{align*}
 ( 1+ x_1+x_2+x_3)\left ( p(x)+\tfrac{17}{4} \right ) = \ &    \tfrac{11}{2}g_3(x) +3 g_1(x) g_2(x)  + 5 g_3(x)x_3   + 18 g_1^2(x) g_2(x) + 16  g_2^2(x) g_1(x) \\
 &+2g_1^2(x) g_3(x) +2g_2^2(x) g_3(x)  + 2 g_1(x) g_3(x) x_3+ 2 g_2(x) g_3(x) x_3   +2x_3^3\\
 & + 5g_1(x) g_2(x) x_3 + 18g_3(x) x_1x_2 +2 g_1(x) g_2(x)x_1x_3+2 g_1(x) g_2(x)x_2x_3 \\
 &+2g_1^3(x) g_2(x) +18 g_1^2(x) g_2^2(x) + 2g_1(x) g_2(x) x_2^2 + g_1(x) x_3^3 + g_2(x) x_3^3 + x_3^4.
\end{align*}
\end{example}

The second example illustrates two features of the proposed P{\'o}lya's \german{} extension (Proposition~\ref{prop:genPolya}). It provides instances in which the Lasserre hierarchy fails to provide good lower bounds for the PO problems (with unbounded feasible set), while the SOS hierarchy derived from the P{\'o}lya's \german{} extension allows us to compute the optimal value of the PO problems. Also, it shows that
the P{\'o}lya's \german{} extension (Proposition~\ref{prop:genPolya}) can be used to obtain or closely approximate
the optimal value of these PO problems with non-SOS based hierarchies.

\begin{example}[Using SOS and non-SOS base classes]
\label{ex:unbounded_new}
  Let $n\ge 2$, and consider the problem
\begin{equation}
\label{pr:exNie}
 z^*_n = \inf  \left \{ \tfrac{1}{n-1}\sum_{i=1}^{n-1}x_i^2+x_n^2 :\sum_{i=1}^{n-1}x_i^2 - x_n\sum_{i=1}^{n-1}x_i - (n-1) \ge 0, \ x_n^2 - 1 \ge 0, \ x_1,\dots,x_n \ge 0 \right \}.
\end{equation}
This problem is inspired by Example 4.5 in~\cite{DemmNP07}. Proposition~\ref{prop:genPolya} applies here since the objective is always strongly positive. The optimal solution and optimal value of~\eqref{pr:exNie} can be obtained analytically. Namely,
\[
 x^*_n=1, x^*_{n-1} =\tfrac{1+\sqrt{1+4(n-1)}}{2}, x^*_{n-2} = \dots = x^*_2 = x^*_1  = 0 \text{ and } z^*_n = 2+\tfrac{1+\sqrt{1+4(n-1)}}{2(n-1)}.
 \]

 Positive coefficients ensure that the Lasserre hierarchy can only use polynomials of degree two since higher degree terms will not cancel out. The Lasserre hierarchy of degree two, for any $n$,  provides the bound of $z^*_n \ge 1.3820$, which,  as can be seen from Table~\ref{tab:exNie}, is substantially below the value of $z_n^*$. Conversely, Table~\ref{tab:exNie}  shows the optimal or very tight bounds on $z^*_n$ that are obtained using
the hierarchy associated with the certificate~\eqref{eq:genPolyaCert} when the base class is set to be the
 SOS, DSOS, or $\R_+$ polynomials of degree less than or equal to 2. In particular, for each base class, Table~\ref{tab:exNie} provides the lower bound ($\lb_0$) provided by first level ($r=0$) of the hierarchy with each of the base classes. Note that when the base class is the SOS polynomials, the lower bound given by the first level of the hierarchy is equal to the optimal value of~\eqref{pr:exNie} (values are presented with a precision of $10^{-4}$). In the cases when the base class is the DSOS or $\R_+$ polynomials, Table~\ref{tab:exNie} provides the lower bound obtained at the first level of the hierarchy ($\lb_0$), as well as the lower bound ($\lb_r$) obtained at a particular hierarchy level $r$. This $r$ is chosen as the hierarchy level that either satisfies: $\lb_r = z^*_n$ (for the first time), computing $\lb_{r+1}$ leads to numerical errors, or  computing $\lb_{r+1}$ takes more than 1800 seconds. These three cases are respectively noted in the status (sts.) columns of the table with a $*$, $\diamond$, or a \outTime. Columns $T$ give the time in seconds needed by the solver to compute the bounds.

\begin{table}[H]
\centering
{\scriptsize
\renewcommand{\tabcolsep}{3.5pt}
\renewcommand{\arraystretch}{1.2}
\begin{tabular}{cccccrcccrccccrcccrccccr}
\toprule
& & && \multicolumn{20}{c}{Base class used in hierarchy associated with~\eqref{eq:genPolyaCert}}\\
\cmidrule{5-24}
& & & & \multicolumn{2}{c}{SOS} && \multicolumn{8}{c}{DSOS} && \multicolumn{8}{c}{$\R_+$}\\
   \cmidrule{5-6} \cmidrule{9-15} \cmidrule{18-24}
$n$ & $z^*_n$ &&  &$\lb_0$ & \multicolumn{1}{c}{T} && & $\lb_0$ & \multicolumn{1}{c}{T} & & $\lb_r$ & $r$ & \status &  \multicolumn{1}{c}{T} && & $\lb_0$ & \multicolumn{1}{c}{T} & & $\lb_r$ & $r$ & \status &  \multicolumn{1}{c}{T} \\
\midrule
2	& 3.6180	&&	& \textbf{3.6180} & $< $0.01   && & 3.5000 & $< $0.01  & & \textbf{3.6180} & 5 & *     & 0.10  && & 3.0000 & $< $0.01 & & 3.6179       & 8  &  $\diamond$  & $< $0.01 \\
3      & 3.0000      &&   & \textbf{3.0000} & $< $0.01  && & 2.9545 & $< $0.01 & & \textbf{3.0000} & 6 & *     & 8.25  &&  & 2.7500 & $< $0.01 & & \textbf{3.0000} & 7 & *    & 0.49      \\
4       & 2.7676      &&   & \textbf{2.7676} & 0.02      && & 2.7500 & 0.02     & & \textbf{2.7676} & 3 & *     & 5.74   && & 2.6000 & $< $0.01 & & 2.7675      & 4 & $\diamond$    & 0.70     \\
5     & 2.6404      &&      & \textbf{2.6404} & 0.05     && & 2.6346 & 0.01     &  & \textbf{2.6404} & 2 & *     & 10.61  && & 2.5000 & $< $0.01 & & 2.6401       & 4 &  $\diamond$   & 7.34       \\
6     & 2.5583      &&      & \textbf{2.5583} & 0.08     && & 2.5571 & 0.42      & & \textbf{2.5583} & 2 & *     & 366.69 && & 2.4000 & 0.01     & & 2.5578      & 5 & \outTime     & 1279.20   \\
7    & 2.5000     &&       & \textbf{2.5000} & 0.18      && & \textbf{2.5000} & 1.09      &   & \textbf{2.5000} & 0 &*     & 1.09 && & 2.3333 & 0.03     & & 2.4985      & 4  & \outTime  &  1236.31    \\
\bottomrule
\end{tabular}
}
\caption{Lower bounds ($\lb_0$, $\lb_r$), at hierarchy level $r$, for problem~\eqref{pr:exNie} using certificates \eqref{eq:genPolyaCert} from Proposition~\ref{prop:genPolya} for $n \in \{1,2,\dots,7\}$ with SOS, DSOS (of degree up to two) and $\R_+$ polynomials as the base class. Running time (T) is given in seconds. Status (sts.) indicates: (*) optimal value found at that hierarchy level ($r$), ($\diamond$) (resp. (\outTime)) numerical error (resp. out of time ($1800$s)) reported at next hierarchy level ($r+1$). Optimal values are highlighted in bold face. \label{tab:exNie}}
\end{table}

\end{example}

\subsection{Delayed Proofs from Section~\ref{sec:copocert}}\label{sec:proofsUnbd}
For brevity, in what follows,  given $u \in \R^m$ and~$d \in \N$, we let
\[
u^{\circ d} := [u_1^{d},\dots,u_m^{d}] \tr .
\]

\begin{proof}[Proof of Theorem~\ref{thm:ineqGen}] For ease of presentation, we first assume that $S \subseteq \R^n_+$. For $j = 1,\dots,m$, let $d_j = \deg g_j$, and define
$
w_j(x,u): = ((1+e\tr x)^{d_{\max}-d_j}g_j(x)-u_j^{d_{\max}} )^2 \in \R[x,u].
$\\
Let
$U=\{(x,u) \in \R_+^{n+m}:  w_1(x,u)=0,\dots, w_m(x,u)=0\},$
and let $q(x):=(1+e\tr x)^{2d_{\max}-\deg p}p(x)$. We apply Proposition~\ref{prop:UbEqu} to $U$ and $q$. To do this, we first check that the assumptions of the proposition hold.
First, note that $S$ being non-empty implies that $U$ is non-empty. Also, for any $(x,u) \in U$ we have~$x \in S$ and thus $q(x) > 0 $; that is, $ q \in \P^+{}(U)$. Next, we have that for $j=1,\dots,m$,
$
\tilde{w}_j(x,u)=((e\tr x)^{d_{\max}-d_j}\tilde g_j(x)-u_j^{d_{\max}} )^2.
$\\
Let $(x,u) \in \tilde{U}$. If $x=0$, then $u=0$, and if $x\neq 0$, then $(e\tr x)^{d_{\max}-d_j}\tilde g_j(x)=u_j^{d_{\max}} \ge 0$ for $j= 1,\dots,m$. Therefore $x\in \tilde{S}  $, which implies  $\tilde{q}(x) =(e\tr x)^{2d_{\max}-\deg p}\tilde{p}(x) >0$, since $\tilde p \in \P^+(\tilde S \setminus \{0\})$. Hence $\tilde q\in \P^+{}(\tilde U \setminus \{0\})$.

 Proposition~\ref{prop:UbEqu} implies that there exists~$G \in \P^+_{2d_{\max}}(\R^{n+m}_+)$ such that  $\tilde G \in \P^+_{2d_{\max}}(\R^{n+m}_+\allowbreak \setminus \{0\})$  and $\alpha_j \in \R$, $j=1,\dots,m$,
 such that
 \vspace{-.1cm}
 \begin{equation}
 \label{eq:rrep}
q(x) = G(x,u) +  \sum_{j=1}^m\alpha_j w_j(x,u)
=G(x,u) +  \underbrace{\sum_{j=1}^m\alpha_j \left((1+e\tr x)^{d_{\max}-d_j}g_j(x)-u_j^{d_{\max}}\right )^2}_{R(x,u)}.
\end{equation}
\looseness -1
Since representation \eqref{eq:rrep} of $q(x)$ depends on $x$ and $u$, the $u$ variables have to cancel out on the right-hand side of~\eqref{eq:rrep}. Since $\alpha_j \in \R$ and each $w_j(x,u)$ depends on $u_j$ only, for $j=1,\dots,m$, the monomials with $u_1,\dots,u_m$ in the polynomial $R(x,u)$ do not cancel out with each other. Thus, all these monomials have to cancel out with monomials of $G(x,u)$. Moreover, $G(x,u)$ cannot contain any other monomials with $u_1,\dots,u_m$. Therefore, in all monomials in $G(x,u)$ containing $u$ the degrees of~$u_j$ are~$d_{\max}$ or~$2d_{\max}$, for all $j=1,\dots,m$. Now, replacing $u_j = \left (\smash{(1+e\tr x)^{d_{\max}-d_j}g_j(x)}\right )^{\sfrac 1{d_{\max}}}$  for all  $j=1,\dots,m$, we obtain from  \eqref{eq:rrep}  that
$(1+e\tr x)^{2d_{\max}-\deg p}p(x) = F(x,(1+e\tr x)^{d_{\max}-\deg g_1}g_1(x), \dots,  (1+e\tr x)^{d_{\max}-\deg g_m}g_m(x)),$
where
$F(x,u_1,\dots, u_m):=G(x,u^{\sfrac 1{d_{\max}}},\dots, u_m^{\sfrac 1{d_{\max}}})$ is a polynomial.

Finally, $G \in \P^+_{2d_{\max}}(\R^{n+m}_+)$ implies $F \in \P^+_{2d_{\max}}(\R^{n+m}_+)$. Also,
$
\tilde{F}(x,u^{\circ d_{\max}}) =  \tilde G \in \P^+_{2d_{\max}}(\R^{n+m}_+\setminus \{0\}).
$
Note that since $F(x,u^{\circ d_{\max}})=G(x,u)$, then
$F(x,u^{\circ d_{\max}})$ and thus $\tilde F(x,u^{\circ d_{\max}})$ have degree $2d_{\max}$.
 To finish the proof, we relax the assumption $S  \subseteq \R^{n}_+$. Define $T:=\{(y,z)\in \R^{2n}_+: g_1(y-z)\ge 0,\dots,g_m(y-z)\ge 0\}=\{(y,z)\in \R^{2n}_+: y-z \in S\}.$ Then $T$ is non empty, $p(y-z)\in \P^+(T)$ and $\tilde p(y-z)\in \P^+(\tilde T \setminus \{0\})$. The statements follow after noticing that $x \in S$ implies $(\max\{0,x\}, -\min\{0,x\}) \in T$ and $\tilde{T}=\{(y,z)\in \R^{2n}_+: y-z \in \tilde S\}$. Hence we can apply the approach described at the start of the proof to the polynomial $p(y-z) \in \R[y,z]$ and the set  $T \subseteq \R^{2n}_+$.
\end{proof}

\begin{proof}[Proof of Proposition~\ref{prop:genPoly}]
Assume $F  \in \P^+_s(\R^{n+m})$ is such that $\tilde F(x,u^{\circ d}) \in \P^+_s(\R^{n+m}_+\setminus \{0\})$.
Notice that $F(x,u^{\circ d})$ is strongly positive on $\R^{n+m}_+$. Thus, we obtain that $(1 + e \tr x + e\tr u)^{r} F(x,u^{\circ d}) = (1 + e \tr x + e\tr u)^{r} G(1,x,u)$  is a polynomial of degree $r+s$ with nonnegative coefficients. By Proposition~\ref{prop:PolyaPols} there are $c_{\alpha, \gamma} \ge 0$ for $(\alpha,\gamma) \in \N^{n+m}_{r+s}$ such that
\begin{equation}
\label{eq:genPolcs}
(1 + e \tr x + e\tr u)^{r} F(x,u^{\circ d}) =
 \sum_{(\alpha, \gamma) \in \N^{n+m}_{r + s} } c_{\alpha, \gamma} x^{\alpha} u^{\gamma}.
\end{equation}
Now let $\omega = \exp(2\pi i/d)$ be the primitive $d$-root of unity. Consider the summation of all the expressions obtained by  substituting $u_i \leftarrow \omega^{k_i}u_i$ in~\eqref{eq:genPolcs}, where each $k_i = 1,\dots d$. We obtain
\begin{align}
\nonumber\sum_{k_1=1}^{d}\cdots \sum_{k_m=1}^{d} (1 + e \tr x + \omega^{k_1}u_1 + \cdots + \omega^{k_m}u_m )^{r} F(x,(\omega^{k_1}u_1)^d,\dots,(\omega^{k_m}u_m)^d)x\\
\label{eq:genPolcsOmega}
 = \sum_{k_1=1}^{d}\cdots \sum_{k_m=1}^{d}\sum_{(\alpha, \gamma) \in \N^{n+m}_{r+s} } c_{\alpha, \gamma} x^{\alpha} (w_1^{k_1}u_1)^{\gamma_1}\cdots(w_m^{k_m}u_m)^{\gamma_m}.
\end{align}
Now we simplify each of the terms in~\eqref{eq:genPolcsOmega}. First, notice that for each $i$ and $k_i$ we obtain $(\omega^{k_i}u_i)^d = (\omega^d)^{k_i} u_i^d = u_i^d$ and thus
\begin{equation}\label{eq:genPolF}
  F(x,(\omega^{k_1}u_1)^d,\dots,(\omega^{k_m}u_m)^d) = F(x,u^{\circ d}).
\end{equation}
Second, using  that for any $\gamma >0$,
\[\sum_{k=1}^{d} \omega^{k\gamma} = \begin{cases} d  \text{ if $d$ divides }\gamma,\\ 0 \text{ otherwise}, \end{cases}
\]
we obtain
\begin{equation}
\label{eq:genPolDen}
\begin{split}
&\sum_{k_1=1}^{d}\cdots \sum_{k_m=1}^{d}  (1 + e \tr x + \omega^{k_1}u_1 + \cdots + \omega^{k_m}u_m )^{r} \\
&=  \sum_{k_1=1}^{d}\cdots \sum_{k_m=1}^{d} \sum_{\beta \in \N^{m}_{r}} {r \choose \beta} (1 + e \tr x)^{r-e\tr\!\beta}(\omega^{k_1}u_1)^{\beta_1}  \cdots  (\omega^{k_m}u_m )^{\beta_m} \\
& =  \sum_{\beta \in \N^{m}_{r}} {r \choose \beta} (1 + e \tr x)^{r-e\tr\!\beta} \sum_{k_1=1}^{d} (\omega^{k_1}u_1)^{\beta_1}  \cdots \sum_{k_m=1}^{d}   (\omega^{k_m}u_m )^{\beta_m} \\
& =  \sum_{\beta \in \N^{m}_{r}} {r \choose \beta} (1 + e \tr x)^{r-e\tr\!\beta} u_1^{\beta_1}\cdots u_m^{\beta_m} \sum_{k_1=1}^{d} \omega^{k_1\beta_1}  \cdots \sum_{k_m=1}^{d}   \omega^{k_m\beta_m} \\
& = d^m \sum_{\beta \in \N^{m}_{\lfloor \frac{r}{d} \rfloor}} {r \choose d\beta}  (1 + e \tr x)^{r-d e\tr\!\beta}  u_1^{d\beta_1}  \cdots u_m^{d\beta_m}\\
& = d^m D^r_{d,de}(1 + e \tr x,u^{\circ d}).
\end{split}
\end{equation}
Similarly, we have
\begin{equation}
\label{eq:genPolRHS}
\begin{split}
 \sum_{k_1=1}^{d}\cdots \sum_{k_m=1}^{d}\sum_{(\alpha, \gamma) \in \N^{n+m}_{r+s} } c_{\alpha, \gamma} x^{\alpha} (w_1^{k_1}u_1)^{\gamma_1}\cdots(w_m^{k_m}u_m)^{\gamma_m}
 =
d^m\sum_{\substack{(\alpha, \gamma) \in \N^{n+m}\\ |\alpha| + d |\gamma| \le r+s }} c_{\alpha, \gamma} x^{\alpha} u_1^{d\gamma_1}\cdots u_m^{d\gamma_m}.
\end{split}
\end{equation}
Plugging \eqref{eq:genPolF}-\eqref{eq:genPolRHS} into \eqref{eq:genPolcsOmega} and multiplying both sides by $d^{-m}$, we obtain
\begin{equation*}
D^r_{d,ed}(1 + e \tr x,u^{\circ d}) F(x,u^{\circ d}) = \sum_{\substack{(\alpha, \gamma) \in \N^{n+m}\\ |\alpha| + d |\gamma| \le r+s }} c_{\alpha, \gamma} x^{\alpha} u_1^{d\gamma_1}\cdots u_m^{d\gamma_m}.
\end{equation*}
Substituting $u_i \leftarrow u_i^{1/d}$ for $i=1,\dots,m$ we obtain that
\begin{equation*}
D^r_{d,ed}(1 + e \tr x,u) F(x,u) = \sum_{\substack{(\alpha, \gamma) \in \N^{n+m}\\ |\alpha| + d |\gamma| \le r+s }} c_{\alpha, \gamma} x^{\alpha} u_1^{\gamma_1}\cdots u_m^{\gamma_m},
\end{equation*}
a polynomial with nonnegative coefficients.
\end{proof}

\section{Concluding remarks}\label{sec:remarks}
In this paper, we reduce the problem of certifying the nonnegativity of a polynomial over a general semialgebraic set to the problem of certifying the nonnegativity of a related polynomial, called a lifting, over a simpler  set. Using this methodology, novel \germansF{} with advantageous properties are derived.

\looseness-1
We derive a
non-SOS Schm\"{u}dgen-type \germansF{} to certify the nonegativity of polynomials over general compact semialgebraic sets, as well as an analogous result for the case of general unbounded semialgebraic sets; namely, an extension of P{\'o}lya's \germanF{}. These two \germansF{} share one main feature. Unlike related results in the literature, there is a lot of freedom in choosing the
{\em base class} for the \germansF{}; that is, the class of nonnegative polynomials used to construct the associated nonnegativity certificates. Indeed, any class of nonnegative polynomials containing the nonnegative constants can be used as the base class  (i.e., the class used to certify nonnegativity). This means that convergent hierarchies to address the solution of PO problems can be constructed with a wide range of classes of nonnegative polynomials beyond SOS polynomials, such as (but not limited to) DSOS, SDSOS, or SONC polynomials.
In turn, this results in freedom to choose different optimization techniques to address the solution of PO problems (e.g., linear and second-order cone optimization and geometric programming). This is particularly important given the lack of scalability of approaches based on the use of SOS polynomials for general PO problems.

We also derive a semi-sparse \germanF{} in which SOS polynomials are used as the base class. The notable feature of this result is that the inherent sparsity in the associated certificate allows to exploit sparsity in a PO problem that cannot be efficiently exploited by existing sparse certificates.

Another feature of the \germansF{} introduced in the article is that, unlike some of the most popular \germansF{} based on the use of SOS polynomials, the proposed \germansF{} are guaranteed to exist for cases in which the underlying set of interest is unbounded or compact but not Archimedean.

The positive numerical results presented in Sections~\ref{sec:SchNumerics}, \ref{sec:sparseNumerics}, and~\ref{sec:copocertexp} indicate the potential computational performance of the proposed \germansF{}  to solve PO problems. It is reasonable to expect that further developments will enhance the computational performance that can be achieved with the proposed methodology. In particular, advances in the use of both SOS and non-SOS classes of polynomials, for polynomial optimization, can help to
improve the numerical performance of the new \germansF{} we derive. In this direction, it is worth noting the improvements on the numerical use of DSOS and SDSOS polynomials in~\citep{ahmadi2017sum}, and in the use of SONC polynomials or the related
{\em sums of arithmetic-geometric exponential} (SAGE) functions in~\citep{dressler2022algebraic, magron2023sonc}. Also relevant is the novel class of SOS+SONC polynomials proposed in~\citep{dressler2023geometrical} that can be readily used as the base class in the proposed non-SOS \germansF{}.
These articles, together with
articles such as~\citep[see, e.g.,][to name a few]{dressler2023geometrical, wang2020second, hyperb1, roebers2021sparse, dressler2018optimization} show the continued interest in developing novel non-SOS solution approaches for PO problems.

\looseness -1
\section{Acknowledgments} We thank three anonymous referees for their constructive and thou\-ght\-ful comments which greatly helped to improve the article.
We express our formal gratitude to Bissan Ghaddar for sharing code that significantly expedited the implementation of the article's numerical experiments.


\end{document}